\newtheorem{lemma}{Lemma}
\newtheorem{proposition}{Proposition}
\newtheorem{corollary}{Corollary}
\newtheorem{theorem}{Theorem}
\newtheorem{definition}{Definition}
\newtheorem{example}{Example}
\newtheorem{remark}{Remark}
\newtheorem*{ack}{Acknowledgment}
\newcommand{\new}{\newcommand}
\new{\beeq}[2]{\begin{equation}\label{#1}{#2}\end{equation}}
\new{\eqn}[1]{~(\ref{#1})}
\new{\de}{{\rm d}}
\new{\unichiusZ}{\no{\cdot}_Z{\rm -cl}\,}
\new{\frecc}{\longrightarrow}
\new{\ev}{{\rm ev}}
\new{\Spanno}[1]{{\rm span\,}\left\{ {#1} \right\}}
\new{\Spannochiuso}[1]{\overline{\rm span}\,\left\{ {#1} \right\}}
\new{\eps}{\epsilon}
\new{\ga}{\gamma}
\new{\Ga}{\Gamma}
\new{\la}{\lambda}
\new{\Imm}{{\rm ran}\,}
\new{\kappo}{\kappa^\prime}
\new{\muu}{\mu^\prime}
\new{\no}[1]{\left\|{#1}\right\|}
\new{\norh}[1]{\left\|{#1}\right\|_\hh}
\new{\nork}[1]{\left\|{#1}\right\|_\kk}
\new{\norq}[1]{\left\|{#1}\right\|_q}
\new{\norp}[1]{\left\|{#1}\right\|_p}
\new{\noru}[1]{\left\|{#1}\right\|}
\new{\normi}[1]{\left\|{#1}\right\|_{\infty}}
\new{\scal}[2]{\left\langle{#1},{#2}\right\rangle}
\new{\scalh}[2]{{\left\langle{#1},{#2}\right\rangle}_\hh}
\new{\scalhuno}[2]{{\left\langle{#1},{#2}\right\rangle}_{\hh_1}}
\new{\scalhdue}[2]{{\left\langle{#1},{#2}\right\rangle}_{\hh_2}}
\new{\scalk}[2]{{\left\langle{#1},{#2}\right\rangle}_\kk}
\new{\set}[1]{\{{#1}\}}
\new{\trh}[1]{{\rm tr}_{\mathcal{H}}\,{#1}}
\new{\trk}[1]{{\rm tr}_{\mathcal{K}}\,{#1}}
\new{\runo}{{\mathbb R}}
\new{\Z}{\mathbb{Z}}
\new{\cuno}{{\mathbb C}}
\new{\nat}{{\mathbb N}}
\new{\vv}{{\mathcal V}}
\new{\ff}{{\mathcal F}}
\new{\bb}{{\mathcal B}}
\new{\hh}{{\mathcal H}}
\new{\xx}{{\mathcal X}}
\new{\hhg}{{\hh_\ga}}
\new{\hhm}{{\hh_\mu}}
\new{\kk}{\mathcal Y}
\new{\elle}[1]{\mathcal{L}( #1 )}
\new{\BK}{{\mathcal{L}({\kk})}}
\new{\BH}{{\mathcal{L}(\mathcal{H})}}
\new{\BHdue}{{\mathcal{L}(\mathcal{H}_2)}}
\new{\BV}{{\mathcal{L}(\mathcal{V})}}
\new{\BHK}{{\mathcal{L}(\mathcal{H};{\kk})}}
\new{\BKH}{{\mathcal{L}({\kk};\mathcal{H})}}
\new{\BXK}{{\mathcal{L}(\mathcal{X};{\kk})}}
\new{\CXK}{{\mathcal{C}(X;{\kk})}}
\new{\CoXK}{{\mathcal{C}_0 (X;{\kk})}}
\new{\CoGK}{{\mathcal{C}_0 (G;{\kk})}}
\new{\CoXHdue}{{\mathcal{C}_0 (X;\hh_2)}}
\new{\Co}{\mathcal{C}_0}
\new{\Cb}{\mathcal{C}_b}
\new{\CbXK}{{\mathcal{C}_b(X;{\kk})}}
\new{\CZK}{{\mathcal{C}(Z;{\kk})}}
\new{\bo}[1]{{\mathcal B}(#1)}
\new{\cc}[1]{{\mathcal C}(#1)}
\new{\ccc}[1]{{\mathcal C}_c(#1)}
\new{\cccc}{\mathcal{C}}
\new{\luno}{L^1(X,\mu)}
\new{\lunol}{L^1(X,\de x)}
\new{\ldue}{L^2(X,\mu)}
\new{\lpi}{L^p(X,\mu)}
\new{\ldueG}{L^2(X,\mu)}
\new{\lpiG}{L^p(G,\mu)}
\new{\lpiprG}{L^{p/(p-1)}(G,\mu)}
\new{\lqu}{L^q(X,\mu)}
\new{\lri}{L^r(Y,\nu)}
\new{\lduet}{L^2(T,\rho)}
\new{\lunog}{L^1(G,\de g)}
\new{\ldueg}{L^2(G,\de g)}
\new{\LomuXK}{L^0(X,\mu; {\kk})}
\new{\LomuXC}{L^0(X,\mu; \mathbb{C})}
\new{\LpmuXK}{L^p(X,\mu; {\kk})}
\new{\LpmuZK}{L^p(Z,\mu; {\kk})}
\new{\LrmuXK}{L^r(X,\mu; {\kk})}
\new{\LqmuXK}{L^q(X,\mu; {\kk})}
\new{\LunomuXK}{L^1(X,\mu; {\kk})}
\new{\LduemuXK}{L^2(X,\mu; {\kk})}
\new{\LinfmuXK}{L^{\infty}(X,\mu; {\kk})}
\new{\LunolXK}{L^1(X,\de x ; {\kk})}
\new{\LduemuZK}{L^2(Z,\mu; {\kk})}
\new{\LunomuZK}{L^1(Z,\mu; {\kk})}
\new{\LinfmuZK}{L^{\infty}(Z,\mu; {\kk})}
\new{\LunoMK}{L^1(Z,|\M |; {\kk})}
\new{\LdueMK}{L^2(Z,|\M |; {\kk})}
\new{\LdueXMK}{L^2(X,|\M |; {\kk})}
\new{\LunoXMK}{L^1(X,|\M |; {\kk})}
\new{\LpXMK}{L^p(X,|\M |; {\kk})}
\new{\LqXMK}{L^q(X,|\M |; {\kk})}
\new{\LinfXMK}{L^{\infty}(X,|\M |; {\kk})}
\new{\LinfMK}{L^{\infty}(Z,|\M |; {\kk})}
\new{\LpconmuXK}{L^{\frac{p}{p-1}}(X,\mu; {\kk})}
\new{\LduemuK}{L^2(X,\mu; {\kk})}
\new{\maaa}{\mu\mathrm{-a.a.}}
\new{\lunot}{L^1(X)}
\new{\lduenot}{L^2(Y)}
\new{\ldueXnot}{L^2(X)}
\new{\supp}[1]{\operatornamewithlimits{supp}\,#1}
\new{\lin}[1]{\operatornamewithlimits{span}\{\,#1\}}
\new{\range}[1]{\operatornamewithlimits{Im}\,#1}
\new{\Ker}[1]{\operatornamewithlimits{ker}\,#1}
\new{\ran}[1]{\operatornamewithlimits{ran}\,#1}
\new{\argmin}{\operatornamewithlimits{argmin}}
\new{\mae}{\mu\mathrm{-a.e.}}
\new{\R}{\mathbb{R}}
\new{\M}{\mathsf{M}}
\new{\mmu}{\mathbf{\mu}}
\new{\llambda}{\mathbf{\lambda}}
\new{\boro}{\mathcal{B}(\Omega)}
\new{\borx}{\mathcal{B}(X)}
\new{\borz}{\mathcal{B}(Z)}
\new{\kkx}{\mathcal{F}(X;\kk)}
\newcommand{\real}{\mathbb R} 
\newcommand{\complex}{{\mathbb C}} 
\newcommand{\lft}{\left(} 
\newcommand{\rgt}{\right)}
\newcommand{\lfg}{\left\{} 
\newcommand{\rgg}{\right\}}
\new{\pair}[2]{\langle #1, #2\rangle}
\new{\conj}[1]{\overline{#1}}
\new{\ve}{K}
\new{\A}{A_\gamma}
\new{\F}[1]{{\mathcal F}(#1)}
\new{\nuh}{\hat{\nu}}
\new{\muh}{\hat{\mu}}
\new{\xh}{\hat{X}}
\new{\zh}{\hat{Z}}
\new{\E}{\kk}
\new{\LdueXE}{L^2(\xh,\nuh;\E)}
\new{\fh}{\hat{f}}
\new{\lah}{{\hat{\lambda}}}
\begin{document}
\setlength\arraycolsep{0pt}
\author{C.~Carmeli\thanks{C.~Carmeli,
DIFI, Universit\`a di Genova, and I.N.F.N.,
Sezione di Genova, Via Dodecaneso~33, 16146 Genova, Italy. e-mail:
carmeli@ge.infn.it}, , E. De Vito\thanks{E.~De Vito,
DSA., Universit\`a di Genova, 
Stradone S. Agostino 37, 16123 Genova, Italy, and I.N.F.N., Sezione di
Genova, Via Dodecaneso~33, 16146 Genova, Italy. e-mail: devito@dima.unige.it}, A.
Toigo\thanks{A.~Toigo,
DISI, Universit\`a di Genova, Via Dodecaneso 35, 16146 Genova, and I.N.F.N.,
Sezione di Genova, Via Dodecaneso~33, 16146 Genova, Italy. e-mail:
toigo@ge.infn.it}, V. Umanit\`a\thanks{V.~Umanit\`a,
DISI, Universit\`a di Genova, Via Dodecaneso 35, 16146 Genova, 
and Dipartimento di Matematica ``F. Brioschi'',
Politecnico di Milano, Piazza Leonardo da
Vinci 32, I-20133 Milano, Italy. e-mail:
veronica.umanita@polimi.it},
}
\title{Vector valued reproducing kernel Hilbert spaces and universality}
\date{\today}

\maketitle

\begin{abstract}
This paper is devoted to the study of vector valued reproducing kernel
Hilbert spaces. We focus on two aspects: vector valued feature maps
and universal 
kernels. In particular we characterize the structure of translation invariant
kernels on abelian groups and we relate it to the universality problem.
\end{abstract}

\section{Introduction}
In learning theory, reproducing kernel Hilbert spaces (RKHS)
are an important tool for designing learning algorithms, see for example
\cite{cusm02,smzh05,suzh08} and the book \cite{cuzh07}.
In the usual setting the elements of the RKHS are scalar
functions. The mathematical theory for scalar RKHS has been established in the seminal 
paper \cite{aro50}. For a standard reference see the book \cite{sai88}. 

In machine learning there is an increasing interest for
vector valued learning algorithms, see
\cite{leliwa01,evmipo05,cade07}. In this framework, the basic object
is a Hilbert space  of functions $f$ from a set $X$ into a normed vector
space $\kk$ with the property that, for any $x\in X$,  $\no{f(x)}\leq
C_x \no{f}$ 
for a positive constant $C_x$ independent of $f$.\\
The  theory of vector valued RKHS has been completely worked
out in the seminal paper \cite{sch64}, devoted to the characterization
of the Hilbert spaces that are continuously  embedded into a locally convex topological
vector space, see \cite{ped57}. In the case $\kk$ is itself a Hilbert
space, the theory can be simplified as shown in
\cite{mipo05,cadeto06,camipoyi08}. As in the scalar case, a RKHS
is completely characterize by a map 
$K$ from $X\times X$ into the space of bounded 
operators on $\kk$ such that
\[
\sum_{i,j = 1}^N \scal{K(x_i , x_j) y_j}{y_i} \geq 0
\]
for any $x_1,\ldots,x_N$ in $X$ and $y_1,\ldots,y_N$ in $\kk$.  Such a map
is called a $\kk$-reproducing kernel and the corresponding RKHS is denoted
by $\hh_K$.  
 
This paper focuses on three aspects of particular interest in vector
valued learning problems: 
\begin{itemize}
\item vector valued feature maps;
\item universal reproducing kernels;
\item translation invariant reproducing kernels.
\end{itemize}
The {\em feature map} approach is the standard way in which 
scalar RKHS are presented in learning theory, see for example \cite{scsm02}. A 
feature map is a function mapping the input space $X$ into an
arbitrary Hilbert space $\hh$ in such a way that $\hh$ can be identified with a
unique RKHS. Conversely, any RKHS can be realized as a closed subspace
of {\em a concrete
  Hilbert space}, called {\em feature space}, by means of a suitable
feature map -- typical examples of feature spaces are $\ell^2$ and
$L^2(X,\mu)$ for some measure $\mu$.\\ 
The concept of feature map is extended
to the vector valued setting in \cite{cadeto06,camipoyi08}, where a
feature map is defined as a function from $X$ 
into the space of bounded operators between $\kk$ and the feature
space $\hh$.\\
 In the first part of our paper, Section~\ref{tre} shows that sum,
product and {\em composition with 
  maps} of RKHS can be easily described by 
suitable feature maps. In particular we give an elementary proof of  Schur
lemma about the product of a scalar kernel with a vector valued
kernel. Moreover, we present several examples of vector valued RKHS, most of them considered
in \cite{mixuzh06,camipoyi08}. For each one of them we exhibit a nice feature space. This allows to describe the impact of these examples on some learning
algorithms, like the regularized least-squares \cite{evpopo00}.  

In the second part of the paper, Section~\ref{secuniversal}
discusses the problem of characterizing 
universal kernels. We say that  a $\kk$-reproducing kernel is {\em universal}
if the corresponding RKHS $\hh_K$ is dense in
$\LduemuXK$ for any probability measure 
$\mu$ on the input space $X$. This definition is motivated observing
that in learning theory the goal is to approximate a target function 
$f^*$ by means of a {\em prediction function} 
$f_n\in\hh_K$, depending on the data, in such a way the distance between 
$f^*$ and $f_n$ goes to zero when the number of data $n$ goes to infinity. 
In learning theory the ``right'' distance is given  by the norm in
$\LduemuXK$, where $\mu$ is the (unknown) probability distribution
modeling the sample of the input data, see \cite{cusm02}. The
possibility of learning any target function $f^*$ by means of
functions in $\hh_K$ is precisely the density of $\hh_K$ in
$\LduemuXK$. Since the probability measure $\mu$ is unknown, we
require that the above property holds for any choice of $\mu$ --
compare with the definition of {\em universal consistency} for a
learning algorithm \cite{gykokr02}. 
Under the condition that the elements of $\hh_K$ are continuous
functions vanishing at infinity, we prove that  universality of $\hh_K$ is equivalent to
require that $\hh_K$ is dense in $\CoXK$, the Banach space of
continuous functions vanishing at infinity with the uniform norm.
If $X$ is compact and $\hh=\cuno$, the density of $\hh_K$ in $\CoXK$
is precisely the definition of {\em universality}
given in \cite{ste01,zho03}. For arbitrary $X$ and
$\kk$, another definition of universality is suggested in \cite{camipoyi08} under the
assumption that the elements of $\hh_K$ are continuous functions. We show that
this last notion is equivalent to require  that $\hh_K$
is dense in $\LduemuXK$ for any probability measure $\mu$ with compact
support, or  that $\hh_K$ is dense in $\CXK$, the space of
continuous functions with the compact-open topology. If $X$ is not
compact, the two definitions of universality are not 
equivalent, as we show in two examples. To avoid confusion we refer to
the second notion as {\em compact-universality}. \\
We characterize both  universality and  compact-universality 
in terms of the injectivity of the integral operator
on $\LduemuXK$ whose kernel is  the reproducing
kernel $K$. For compact-universal kernels, this result is presented in a slightly different form in
\cite{camipoyi08} -- compare Theorem~\ref{univimplLKiniett} below with
Theorem~11 of \cite{camipoyi08}. However, our statement of the theorem
does not require a direct use of  vector valued
measures, our proof is simpler and it 
is based on the fact that any bounded linear functional $T$ on $\CoXK$ is of the
form
\[ T(f)=\int_X\scal{f(x)}{h(x)}\de\mu(x),\]
where $\mu$ is a probability measure and $h$ is a bounded measurable function
from $X$ to $\kk$ -- see Appendix~\ref{sec. mis. vett.}. 
Notice that, though in learning theory the
main issue is the density of the RKHS $\hh_K$ in $\LduemuXK$, however, our results hold if, in the
definition of universal kernels,
we replace $\LduemuXK$ with $\LpmuXK$ for any $1\leq p< \infty$. In particular, we show that $\hh_K$ is dense in $\CoXK$ if
and only if there exists $1\leq p< \infty$ such that $\hh_K$ is dense
in $\LpmuXK$ for any probability measure $\mu$. In that case,  $\hh_K$ is dense
in $\LqmuXK$ for any $1\leq q< \infty$.

In the third part of the paper, under the
assumption that $X$ is a group,  Section~\ref{secabelian} studies 
translation invariant reproducing kernels, that is, the kernels 
such that $K(x,t)=K_e(t^{-1}x)$ for some operator valued function
$K_e:X\to\BK$ of completely positive type. In particular, we show that any translation invariant
kernel is of the form
\[K(x,t)=A\pi_{x^{-1}t}A^*\]
for some unitary representation $\pi$ of $X$ acting on a Hilbert space $\hh$,
 and a bounded operator
$A:\hh\to\kk$. If $X$ is an abelian group,  SNAG theorem \cite{fol95} provides a more
explicit description of the reproducing kernel $K$, namely
\[K(x,t)=\int_{\xh}\chi(t-x) dQ(\chi),\]
where $\xh$ is the dual group and $Q$ is a positive operator valued
measure on $\xh$.  The above equation is precisely the content of Bochner theorem
for operator valued functions of positive type
\cite{boc59,faha72}. In particular, we show that the corresponding
RKHS $\hh_K$ can be always realized as a closed subspace of
$L^2(\xh,\nuh,\kk)$ where $\nuh$ is a suitable positive measure on $\xh$.
In this setting, we give a sufficient condition ensuring that 
a translation invariant kernel is universal. This condition is
also necessary if $X$ is compact or 
$\kk=\cuno$. For scalar kernels and compact-universality this result is
given in~\cite{mixuzh06}. We end the paper by discussing in
Section~\ref{secexample}  
the universality of some of the examples introduced in Section~\ref{tre}. 

\section{Background}\label{sez. not.}
In this section we set the main notations and we recall some basic
facts about vector valued reproducing kernels.
\subsection{Notations and assumptions}
In the following we fix a locally compact second countable topological
space $X$ and a complex separable Hilbert space $\kk$, whose norm and
scalar product are denoted by $\no{\cdot}$ and
$\scal{\cdot}{\cdot}$ respectively. Local compactness of $X$ is needed
in order to 
prove Theorem~\ref{semplice} in the appendix, which is at the root of
Theorem~\ref{main1}. The separability of $X$ and $\kk$ will avoid some
problems in measure theory. All these assumptions are always
satisfied in learning theory.

We denote by $\kkx$ the vector space of functions $f:X\to \kk$, by
$\CXK$ the subspace of continuous functions, and by $\CoXK$ the
subspace of continuous functions vanishing at infinity. If $\kk = \cuno$, we set
$\cccc(X)=\cccc(X;\cuno)$ and $\cccc_0 (X) =
\cccc_0(X,\cuno)$. If $X$ is compact, $\CoXK=\CXK$.  \\
We regard $\CXK$ as a locally convex
topological vector space by endowing it with the compact-open
topology\footnote{This 
  is the topology of uniform convergence on compact subsets defined by
  the family of seminorms $\no{f}_Z = \max_{x\in Z}\no{f(x)}$
for $Z$ varying over the compact subsets in $X$.} and $\CoXK$ as a 
Banach space with respect to the uniform norm $\no{f}_\infty =
\max_{x\in X}\no{f(x)}$.   

Let $\borx$ be the Borel $\sigma$-algebra of $X$. By a
{\em measure} on $X$ we mean a $\sigma$-additive map $\mu : \borx
\frecc [0,+\infty]$ which is finite on compact sets\footnote{Since $X$
  is locally compact second countable, then  $\mu$ is both
  inner and outer regular.}. We say that $\mu$ is 
a {\em probability measure} if $\mu (X) = 1$.   For $1 \leq
p < \infty$, $\LpmuXK$ denotes the Banach space of (equivalence
classes of) measurable\footnote{Since $\kk$ is separable, 
  measurability is equivalent to the fact that $\scal{f(\cdot)}{y}$ is
  measurable for all $y\in\kk $.} functions $f:X\to \kk$ such that $\no{f}^p$ is
$\mu$-integrable, with norm
${\no{f}_p = \left(\int_{X} \no{f(x)}^p \de\mu(x)\right)^{1/p}}$.
If $p=2$ we denote the scalar product in $L^2(X,\mu,\kk)$ by $\scal{\cdot}{\cdot}_2$.
For $p=\infty$, $\LinfmuXK$ is the Banach space of $\mu$-essentially
bounded measurable functions $f:X\to \kk$ with norm
$\no{f}_{\mu,\infty} = \mu {\rm -ess\, sup}_{x\in X}\no{f(x)}$.

If $\mu$ is a probability measure, clearly 
$$\CoXK \subset \LpmuXK\subset \LqmuXK$$ 
for all $1\leq q < p \leq \infty$, each inclusion being
continuous. Moreover, since $X$ is locally compact and second countable, 
$\CoXK$ is dense in $\LpmuXK$ for any $1\leq p<\infty$. 

If $\hh$ is an arbitrary (complex) Hilbert space we denote  its scalar product
by $\scal{\cdot}{\cdot}_\hh$ and its norm by
$\no{\cdot}_\hh$. When $\hh^\prime$ is another Hilbert space, we denote
by $\elle{\hh ; \hh^\prime}$ the 
Banach space of bounded operators from $\hh$ to
$\hh^\prime$ endowed with the uniform norm. In the case $\hh = \hh^\prime$, we
set  ${\mathcal L}(\hh)=\elle{\hh ; \hh}$.\\
Given $w_1,w_2\in\hh$, we let $w_1\otimes w_2$ be the rank one operator
\[ (w_1\otimes \overline{w_2})v=\scal{v}{w_2}_{\hh}w_1\qquad v\in\hh.\]

\subsection{Vector valued reproducing kernels}\label{seckernel}
We briefly recall the main properties of vector valued reproducing
kernel Hilbert spaces. Given $X$ and $\kk$ as above, a map $K :
X\times X \frecc \BK$ is called a {\em $\kk$-reproducing kernel} if 
$$
\sum_{i,j = 1}^N \scal{K(x_i , x_j) y_j}{y_i} \geq 0
$$
for any $x_1,\ldots,x_N$ in $X$, $y_1,\ldots,y_N$ in $\kk$ and $N\geq 1$.
Given $x\in X$, $\ve_x:\kk\to \kkx$ denotes the linear operator whose action on a
vector $y\in\kk$ is the function $\ve_xy\in\kkx$ defined by
\begin{equation}  \label{ern}
 (\ve_x y)(t) = K(t,x)y \qquad  t\in X. 
\end{equation}
Given a $\kk$-reproducing kernel $K$, there is a unique Hilbert space
${\hh_K\subset \kkx}$ satisfying 
\begin{align} 
&   \ve_x \in  \elle{\kk,\hh_K}\qquad\ \ x\in X \label{clemente}\\
&  f(x) =  \ve_x^\ast f \qquad\qquad x\in X,\  f\in\hh_K\label{silvio},  
\end{align}
where $\ve_x^\ast:\hh_K\to \kk$ is the adjoint of $\ve_x$, see Proposition~$2.1$ of
\cite{cadeto06}. The space $\hh_K$ is called the {\em reproducing kernel Hilbert 
space} associated with $K$, the corresponding scalar product and  norm
are denoted by $\scal{\cdot}{\cdot}_K$ and $\no{\cdot}_K$, respectively.
As a consequence of\eqn{silvio}, we have that  
\begin{align*}
&  K(x,t)\,  =\,   \ve_x^\ast \ve_t \qquad x,t\in X 
\\
& \hh_K \, = \,\Spannochiuso{\ve_x y \mid x\in X, y\in\kk}. 
\end{align*}
As discussed in the introduction, the space $\hh_K$ can be realized as
a closed subspace of some arbitrary Hilbert space by means of a
suitable feature map, as shown by the next result, see Proposition~2.4 of~\cite{cadeto06}. 
\begin{proposition}\label{featuremap} 
Let $\hh$ be a Hilbert space and
  $\gamma:X\frecc\mathcal{B}({\kk};\mathcal{H})$. Then the
  operator $W:{\hh}\frecc\kkx$ defined by
\begin{equation}\label{defW} 
(W u)(x)=\gamma_x^*u , \qquad u\in\hh,\ x\in X,
\end{equation}
is a partial isometry from ${\hh}$ onto the reproducing kernel Hilbert
space $\hh_K$ with reproducing kernel
\begin{equation}
  \label{kernel_feature}
K(x,t)=\gamma_x^*\gamma_t, \qquad x,t\in X .
\end{equation}
Moreover, $W^* W$ is the orthogonal projection onto 
\[\ker{W}^\perp=\Spannochiuso{ \gamma_x y \mid x\in X,\ y\in\kk},\]
and
\begin{equation*}
\no{f}_K=\inf\set{\no{u}_\hh\mid u\in\hh,\ Wu=f}.
\end{equation*}
\end{proposition}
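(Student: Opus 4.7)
The plan is to verify that $K(x,t)=\gamma_x^*\gamma_t$ is a $\kk$-reproducing kernel, then exhibit $W$ as a partial isometry onto the associated RKHS by matching norms on a total family of vectors.

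First, I would check the positivity of $K$: the double sum $\sum_{i,j}\scal{K(x_i,x_j)y_j}{y_i}$ collapses to $\norh{\sum_i\gamma_{x_i}y_i}^2\geq 0$, so the unique RKHS $\hh_K\subset\kkx$ satisfying (\ref{clemente})--(\ref{silvio}) exists. The adjoint identity $\scalh{u}{\gamma_x y}=\scalk{(Wu)(x)}{y}$ then yields at once that $Wu=0$ iff $u$ is orthogonal to every $\gamma_x y$, i.e.\ $\Ker W = \hh_0^\perp$ with
\[\hh_0 \;=\; \Spannochiuso{\gamma_x y \mid x\in X,\, y\in\kk}.\]

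The central step is to evaluate $W$ on a finite sum $u=\sum_i\gamma_{x_i}y_i$: one gets $(Wu)(t)=\sum_i K(t,x_i)y_i = \sum_i(\ve_{x_i}y_i)(t)$, so $Wu=\sum_i \ve_{x_i}y_i\in\hh_K$, and the same algebraic identity used for positivity gives $\no{Wu}_K^2=\norh{u}^2$. Hence $W$ is isometric on the dense linear span of $\{\gamma_x y\}$ inside $\hh_0$. A standard Cauchy-sequence argument---using the continuity of point evaluation in $\hh_K$ guaranteed by (\ref{silvio}), together with the continuity of each $\gamma_x^*$---then shows that $Wu_0\in\hh_K$ and $\no{Wu_0}_K=\norh{u_0}$ for every $u_0\in\hh_0$.

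To conclude, the image $W(\hh_0)$ is closed in $\hh_K$ (isometric image of a complete space) and contains every generator $\ve_x y$, so it equals all of $\hh_K=\Spannochiuso{\ve_x y}$. Combined with $\Ker W = \hh_0^\perp$, this says $W$ is a partial isometry with initial space $\hh_0=\Ker W^\perp$ and final space $\hh_K$, so $W^*W$ is automatically the orthogonal projection onto $\hh_0$. The norm formula then follows by writing any $u$ with $Wu=f$ as $u=u_0+v$ with $u_0\in\hh_0$ and $v\in\Ker W$: then $\norh{u_0}=\no{Wu_0}_K=\no{f}_K$ and $\norh{u}^2=\norh{u_0}^2+\norh{v}^2$, so the infimum equals $\no{f}_K$ and is attained at $u_0$.

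I expect the only delicate point to be the surjectivity $W(\hh_0)=\hh_K$; this rests on the closed-range property of isometries together with the density of $\{\ve_x y \mid x\in X,\, y\in\kk\}$ in $\hh_K$. All other steps amount to unpacking the adjoint relation and applying the reproducing property.
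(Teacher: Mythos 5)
Your proof is correct and complete. Note that the paper itself does not prove this proposition; it simply cites Proposition~2.4 of the reference \cite{cadeto06}, so there is no in-paper argument to compare against. Your route is the standard one: verify positivity of $K$ via the identity $\sum_{i,j}\scal{K(x_i,x_j)y_j}{y_i}=\norh{\sum_i\gamma_{x_i}y_i}^2$, identify $\ker W$ as the orthogonal complement of $\Spannochiuso{\gamma_x y\mid x\in X,\ y\in\kk}$ from the adjoint relation, check that $W$ is isometric from the finite span into $\hh_K$, extend by continuity of point evaluations, and use closedness of the isometric image together with totality of $\set{\ve_x y}$ in $\hh_K$ to get surjectivity; the projection property of $W^*W$ and the infimum formula for $\no{f}_K$ then follow exactly as you say. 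All the delicate points (well-definedness of the limit $Wu_0$, surjectivity onto $\hh_K$) are correctly identified and handled.
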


The map $\ga$ is usually called the {\em feature map}, $W$ the {\em feature operator} and 
 $\hh$ the {\em feature space}. Since $W$ is an isometry from
 $\ker{W}^\perp$ onto $\hh_K$, the map $W$ allows us to identify $\hh_K$
 with the closed subspace $\ker{W}^\perp$ of $\hh$.   
With a mild abuse of notation, we say that {\em $\hh_K$ is embedded
into $\hh$ by means of the feature operator $W$}.\\
Comparing\eqn{defW} with\eqn{silvio}, we notice that any RKHS $\hh_K$
admits a trivial feature map, namely $\ga_x=K_x$. In this case the feature
operator is the identity. Conversely, if $\hh$ is a Hilbert space of
functions from $X$ to $\kk$ such that $\no{f}\leq C_x\no{f}_{\hh}$ for
some positive constant $C_x$, then there exists a bounded operator
$\ga_x:\kk\to\hh$ such that $f(x)=\ga_x^\ast f $. Hence, the above proposition
implies that $\hh$ is a RKHS with kernel 
given by\eqn{kernel_feature} and that the feature operator is the identity.

\subsection{Mercer and $\Co$-kernels}
In this paper, we mainly focus on reproducing
kernel Hilbert spaces, whose elements are continuous functions.
In particular we study the following  two classes of reproducing kernels.
\begin{definition} A reproducing kernel $K:X\times X\to\BK$ is called
  \begin{enumerate}
  \item[{\rm (i)}]\label{KerneldiMercer}
   {\em Mercer} provided that  $\hh_K$ is a subspace of $\CXK$;
   \item[{\rm (ii)}]\label{cokernel}
   {\em $\Co$} provided that $\hh_K$ is a subspace of $\CoXK$.
  \end{enumerate}
\end{definition}
The choice of $\CXK$ and $\CoXK$ is motivated in
Section~\ref{secuniversal} where we discuss the
universality problem.

The following proposition directly characterizes Mercer
and $\Co$-kernels in terms of properties of the kernels. 
\begin{proposition}\label{equivalenza in Co}
Let $K$ be a reproducing kernel.
\begin{itemize}
\item[{\rm (i)}] The kernel $K$ is Mercer iff the function
  $x\longmapsto\no{K(x,x)}$ is locally bounded and 
  $\ve_x y \in \CXK$ for all $x\in X$ and $y\in\kk$.
\item[{\rm (ii)}] The kernel $K$ is $\Co$ iff the function $x\longmapsto\no{K(x,x)}$ is bounded
 and $\ve_x y \in \CoXK$ for all $x\in X$ and $y\in\kk$.
\end{itemize}
If $K$ is a Mercer kernel, the inclusion $\hh_K \hookrightarrow \CXK$ is 
continuous. If $K$ is a $\Co$-kernel the inclusion $\hh_K
\hookrightarrow \CoXK$ is continuous. In both cases, the space $\hh_K$ is separable. 
\end{proposition}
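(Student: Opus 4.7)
The plan is to prove both directions of (i) and (ii) simultaneously, since the arguments are parallel, and then to deduce the continuity of the inclusions and separability as consequences. Throughout, the key tool is the pointwise inequality $\no{f(x)} = \no{\ve_x^\ast f} \leq \no{\ve_x} \no{f}_K$, combined with $\no{\ve_x}^2 = \no{\ve_x^\ast \ve_x} = \no{K(x,x)}$.

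For the necessity direction in (i), assume $\hh_K \subset \CXK$. Then each $\ve_x y$ already lies in $\hh_K$, hence in $\CXK$. To obtain local boundedness of $\no{K(x,x)}$, I would apply the closed graph theorem to the inclusion $\hh_K \hookrightarrow \CXK$: the target is a Fréchet space (metrizable since $X$ is $\sigma$-compact), and if $f_n \to f$ in $\hh_K$ and $f_n \to g$ in $\CXK$, then the pointwise estimate above implies $f_n(x) \to f(x)$ for every $x$, while uniform convergence on compacts implies $f_n(x) \to g(x)$ pointwise too, giving $f=g$. Continuity then yields, for every compact $Z$, a constant $C_Z$ with $\sup_{x\in Z}\no{f(x)} \leq C_Z \no{f}_K$, which by taking the supremum over $f$ in the unit ball forces $\no{\ve_x} \leq C_Z$, i.e.\ $\no{K(x,x)}\leq C_Z^2$ on $Z$. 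In (ii), replace $\CXK$ by the Banach space $\CoXK$ and deduce a single global bound $\no{K(x,x)}\leq C^2$.

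For the sufficiency direction, I would start from the characterization
\[
 \hh_K = \Spannochiuso{\ve_x y \mid x\in X, y\in\kk}
\]
recalled in Section~\ref{seckernel}. Any finite linear combination $\sum_i \ve_{x_i} y_i$ lies in $\CXK$ by assumption (in case (ii), in $\CoXK$). Given $f \in \hh_K$ and a sequence $f_n$ of such combinations with $f_n \to f$ in $\hh_K$, the pointwise bound gives
\[
 \no{f_n(x) - f(x)} \leq \no{K(x,x)}^{1/2} \no{f_n - f}_K.
\]
Local boundedness of $\no{K(x,x)}$ then forces uniform convergence on each compact $Z \subset X$, so $f$ is continuous, proving (i). For (ii), the global bound upgrades this to uniform convergence on $X$, so $f$ is the uniform limit of functions in $\CoXK$ and therefore itself lies in $\CoXK$. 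The main subtlety here is checking that the same quantitative estimate both carries continuity (resp.\ vanishing at infinity) through the limit and delivers the continuity of the inclusion, but both reduce to the single estimate above.

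The continuity of the inclusions $\hh_K \hookrightarrow \CXK$ and $\hh_K \hookrightarrow \CoXK$ is now read off directly: on a compact $Z$, $\no{f}_Z \leq (\sup_{x\in Z}\no{K(x,x)})^{1/2}\no{f}_K$, and in the $\Co$-case $\no{f}_\infty \leq (\sup_{x\in X}\no{K(x,x)})^{1/2}\no{f}_K$. Finally, for separability I would invoke the fact that, under our standing assumptions that $X$ is locally compact second countable and $\kk$ is separable, both $\CoXK$ (with the uniform norm) and $\CXK$ (with the compact-open topology, metrizable by $\sigma$-compactness of $X$) are separable metric spaces. Since a continuous injection of a metric space into a separable metric space yields a separable domain, $\hh_K$ is separable in either case. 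The principal obstacle, if any, is the careful application of the closed graph argument in the necessity part of (i), where one must verify that the graph of the inclusion into the Fréchet space $\CXK$ is closed before concluding quantitative local boundedness.
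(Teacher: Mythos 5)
Your treatment of items (i) and (ii) and of the continuity of the two inclusions is correct and essentially coincides with the paper's argument: the sufficiency direction is the same (density of the span of the $\ve_x y$ together with the estimate $\no{f(x)}\leq\no{K(x,x)}^{1/2}\no{f}_K$), while for necessity you run the closed graph theorem into the Fr\'echet space $\CXK$ (resp.\ the Banach space $\CoXK$) where the paper applies the uniform boundedness principle to the family $\set{\ve_x^\ast}$; these are interchangeable Baire--category arguments, and your verification that the graph is closed is sound.

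The separability claim, however, is not proved. The principle you invoke --- that a metric space admitting a continuous injection into a separable metric space is itself separable --- is false. For instance, the map $\ell^\infty\to\ell^2$, $(x_n)_n\mapsto(2^{-n}x_n)_n$, is a continuous injection of a nonseparable Banach space into a separable Hilbert space. So continuity of $\hh_K\hookrightarrow\CoXK$ does not by itself transfer separability to $\hh_K$. The correct argument (this is the content of Corollary~5.2 of \cite{cadeto06}, which the paper cites at this point) uses the reproducing structure rather than the inclusion: choose countable dense subsets $X_0\subset X$ and $\kk_0\subset\kk$ and observe that the countable family $\set{\ve_x y\mid x\in X_0,\ y\in\kk_0}$ is total in $\hh_K$, because $\scal{f}{\ve_x y}_K=\scal{f(x)}{y}$, so any $f$ orthogonal to all of these vanishes on the dense set $X_0$ and hence, being continuous, vanishes identically. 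This repairs the last assertion in one line, but as written your proof of it does not go through.
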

\begin{proof}
We prove only~(ii), since the other proof is similar -- see 
Proposition~5.1 of~\cite{cadeto06}. If $\hh_K \subset \CoXK$, it is clear that $\ve_x
y$ is an element of $\CoXK$. Moreover, since $\no{\ve_x^\ast f} =
\no{f(x)} \leq \no{f}_\infty$ $\forall f\in \mathcal{H}_K$, by the
principle of uniform boundedness 
there exists $M<\infty$ such that $\no{{\ve_x^\ast}} \leq M$ for all
$x$. Therefore, $\no{K(x,x)} = \no{{\ve_x^\ast}}^2 \leq M^2$ for all $x$.\\
Conversely,  assume that the function $x\longmapsto\no{K(x,x)}$ is bounded
 and $\ve_x y \in \CoXK$. Given $f\in\hh_K$, we have
\[\no{f(x)}\leq \no{f}_{K} \no{K(x,x)}^{1/2} \leq M \no{f}_{K}\,.\]
In particular, convergence in $\hh_K$ implies uniform
convergence, so that the closure (in $\hh_K$) of the linear span of 
$\set{\ve_x y \mid x\in X, y\in\kk}$
is contained in  $\CoXK$, {\em i.e.} $\hh_K\subseteq \CoXK$.\\
The continuity of the inclusion of $\hh_K$ in $\CoXK$ 
follows from $\no{f}_\infty \leq M \no{f}_{\hh_K}$. Finally
$\hh_K$ is separable by Corollary~5.2 of~\cite{cadeto06}. 
\end{proof}
If $\hh_K$ is defined by means of a feature map $\ga$, the above
characterization can be expressed in terms of $\ga$, as shown by the
following result.
\begin{corollary}\label{featureco}
With the notations of Proposition~$\ref{featuremap}$ the following
conditions are equivalent.
\begin{enumerate}
\item[{\rm (a)}] The kernel $K$ is Mercer $[$resp. $\Co]$.
\item[{\rm (b)}] There is a total set $\mathcal S$ in $\hh$ such that
  $W({\mathcal S})\subset\CXK$ $[$resp.  $W({\mathcal S})\subset\CoXK]$
  and the function $x\longmapsto\no{\ga_x}$ is locally bounded
  $[$resp. bounded$]$.  
\end{enumerate}
\end{corollary}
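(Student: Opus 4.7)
The approach is to use Proposition~\ref{equivalenza in Co} as a bridge between properties of the kernel $K$ and properties at the feature-map level, together with the elementary identity $\no{K(x,x)}=\no{\gamma_x^\ast\gamma_x}=\no{\gamma_x}^2$ which converts local/global bounds on $\no{K(x,x)}$ into the corresponding bounds on $\no{\gamma_x}$ and vice versa.

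For (a) $\Rightarrow$ (b) the simplest choice is $\mathcal{S}=\hh$. Since $W$ is a partial isometry from $\hh$ onto $\hh_K$ by Proposition~\ref{featuremap}, we have $W(\mathcal{S})=\hh_K$, which is automatically contained in $\CXK$ (resp.\ $\CoXK$) by the definition of a Mercer (resp.\ $\Co$) kernel. The local boundedness (resp.\ boundedness) of $x\mapsto\no{\gamma_x}$ then follows from the key identity and the corresponding statement for $\no{K(x,x)}$ supplied by Proposition~\ref{equivalenza in Co}.

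The interesting direction is (b) $\Rightarrow$ (a), and I would again invoke Proposition~\ref{equivalenza in Co}. The condition on $\no{K(x,x)}$ is immediate from the identity, so it suffices to prove $\hh_K=W(\hh)\subset\CXK$ (resp.\ $\CoXK$). The driving observation is the pointwise estimate
\[\no{(Wu)(x)-(Wv)(x)}=\no{\gamma_x^\ast(u-v)}\leq\no{\gamma_x}\,\no{u-v}_\hh,\]
which converts norm convergence in $\hh$ into uniform convergence on each compact subset (when $\no{\gamma_x}$ is locally bounded) or into global uniform convergence (when $\no{\gamma_x}$ is bounded). Since $\Spanno{\mathcal{S}}$ is dense in $\hh$ and $W(\Spanno{\mathcal{S}})\subset\CXK$ (resp.\ $\CoXK$) by linearity of $W$, the closedness of $\CXK$ under uniform convergence on compact sets (resp.\ of $\CoXK$ under uniform convergence on $X$) forces $W(\hh)\subset\CXK$ (resp.\ $\CoXK$).

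The main subtlety I anticipate is the Mercer half of (b) $\Rightarrow$ (a), where local (rather than global) boundedness of $\no{\gamma_x}$ means $W$ cannot be viewed as a bounded operator into a single Banach space. The remedy is to argue compact set by compact set: given a compact $Z\subset X$ and $M_Z$ with $\no{\gamma_x}\leq M_Z$ on $Z$, the above estimate yields $\max_{x\in Z}\no{Wu_n-Wu}\leq M_Z\no{u_n-u}_\hh$ for any sequence $u_n\to u$ in $\hh$, which suffices to transfer continuity of the $Wu_n$ to $Wu$ on $Z$. Letting $Z$ vary over all compacta of $X$ gives continuity on the whole space, completing the argument.
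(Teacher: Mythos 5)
Your proof is correct and follows essentially the same route as the paper: for (a)~$\Rightarrow$~(b) one takes $\mathcal S=\hh$ and uses $\no{\ga_x}^2=\no{K(x,x)}$ together with Proposition~\ref{equivalenza in Co}, and for (b)~$\Rightarrow$~(a) one uses the estimate $\no{(Wu)(x)}\leq\no{\ga_x}\no{u}_\hh$ to transport density of $\Spanno{\mathcal S}$ in $\hh$ into uniform (or locally uniform) approximation, concluding by closedness of $\CoXK$ (resp.\ $\CXK$) under the relevant convergence. The only cosmetic difference is that in (b)~$\Rightarrow$~(a) you end up proving $\hh_K\subset\CXK$ directly, which makes the appeal to Proposition~\ref{equivalenza in Co} in that direction superfluous.
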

\begin{proof}
We give the proof only in the case of a $\Co$-kernel, the other case being simpler.
Suppose hence (a) holds true, {\em i.e.} $\hh_K \subset\Co$, then $W(\mathcal
S)\subset\ran{W}=\hh_K\subset\CoXK$ for all subset $\mathcal{S}$ of $\hh$. Moreover,
$\no{\ga_x}^2=\no{K(x,x)}\leq M$
by item~(ii) of Proposition~$\ref{equivalenza in Co}$. Conversely, if
condition (b) holds, we have that for all $x\in X$ and $u\in
\hh$ 
\[\no{(Wu)(x)}=\no{K_x^\ast( Wu)}\leq
  \no{K_x^\ast}\no{W}\no{u}_\hh\leq\no{K(x,x)}^{\frac{1}{2}}\no{u}_\hh\leq
  M^{\frac{1}{2}}\no{u}_\hh,\] 
where $\no{W}\leq 1$ being  $W$ a partial isometry.
Then $W$ maps $\hh$ into the space of bounded functions and $W$ is
continuous from $\hh$ onto $\hh_K$ endowed with the uniform
norm. Since $W(\mathcal S)\subset \CoXK$ and $\CoXK$ is complete, then
$\hh_K\subset\CoXK$.      
\end{proof}
\subsection{Mercer theorem}\label{submercer}
For a Mercer kernel $K$, there is a {\em canonical} feature map, based on
Mercer theorem, which relates the spectral properties of the
integral operator with kernel $K$, and the structure of the
corresponding reproducing kernel Hilbert space. This result will be
also used in the examples. \\ 
To state this result for vector valued
reproducing kernels, we need some preliminary facts.
First of all,  if $K$ is a Mercer kernel and $\mu$ is a probability measure
on $X$, the space $\hh_K$ is a subspace 
of $\LduemuXK$, provided that $\no{K(x,x)}$ is bounded on the support of $\mu$. 
This last condition is always satisfied if $K$ is a $\Co$-kernel or if
$\mu$ has compact support. If $\hh_K$ is a subspace of $\LduemuXK$, we
denote the canonical inclusion by  
\[i_{\mu}:\hh_K\hookrightarrow\LduemuXK.\] 
Next lemma states some properties of $i_{\mu}$ and its proof  
is a consequence of Propositions~$3.3$,~$4.4$ and~$4.8$ of~\cite{cadeto06}. 
\begin{proposition}\label{integral} 
Let $K$ be a Mercer kernel and $\mu$ a probability
measure such that $K$ is bounded on the support of $\mu$. The
inclusion $i_{\mu}$ is a bounded operator, 
its adjoint $i_\mu^*:\LduemuXK\frecc\hh_K$ is given by
\begin{equation*}
(i_\mu^\ast f)(x) = \int_{X} K(x,t)f(t)\de\mu(t),
\end{equation*}
where the integral converges in norm, and the composition
$ i_\mu i_\mu^*=L_\mu$ is the
integral operator on $\LduemuXK$ with kernel $K$ 
\begin{equation*}
(L_\mu f)(x) = \int_{X} K(x,t)f(t)\de\mu(t).
\end{equation*}
In particular, if $K(x,x)$ is a compact operator for
all $x\in X$, then $L_K$ is a compact operator.
\end{proposition}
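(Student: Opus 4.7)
The plan is to unwind each claim via the reproducing property and a vector-valued (Bochner) integral, exploiting the boundedness of $K$ on the support of $\mu$.

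\textbf{Boundedness of $i_\mu$.} By the reproducing property, $\no{f(x)} = \no{\ve_x^\ast f} \le \no{\ve_x}\,\no{f}_K = \no{K(x,x)}^{1/2}\no{f}_K$. Setting $M := \sup_{x \in \supp\mu}\no{K(x,x)}$, which is finite by hypothesis, and integrating against the probability measure $\mu$ yields $\no{i_\mu f}_2^2 \le M \no{f}_K^2$.

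\textbf{The adjoint.} The idea is to define, for fixed $f \in \LduemuXK$, the $\hh_K$-valued Bochner integral $I(f) := \int_X \ve_t f(t)\,\de\mu(t)$. Existence follows because $t\mapsto \ve_t f(t)$ is weakly measurable into the separable Hilbert space $\hh_K$ (by Proposition~\ref{equivalenza in Co} plus separability of $\kk$), and because
\[
\int_X \no{\ve_t f(t)}_K\,\de\mu(t) \le M^{1/2}\int_X \no{f(t)}\,\de\mu(t) \le M^{1/2}\no{f}_2,
\]
using Cauchy--Schwarz in the last step. For any $g \in \hh_K$,
\[
\scal{I(f)}{g}_K = \int_X \scal{\ve_t f(t)}{g}_K\,\de\mu(t) = \int_X \scal{f(t)}{\ve_t^\ast g}\,\de\mu(t) = \scal{f}{i_\mu g}_2,
\]
so $i_\mu^\ast f = I(f)$. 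Applying the bounded operator $\ve_x^\ast$ to $I(f)$ and passing it under the integral gives
\[
(i_\mu^\ast f)(x) = \ve_x^\ast I(f) = \int_X \ve_x^\ast\ve_t f(t)\,\de\mu(t) = \int_X K(x,t)f(t)\,\de\mu(t),
\]
an integral that converges in the norm of $\kk$.

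\textbf{The integral operator $L_\mu$.} Since $i_\mu$ is simply the inclusion, $L_\mu = i_\mu i_\mu^\ast$ acts on $f \in \LduemuXK$ by the same pointwise formula above.

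\textbf{Compactness when each $K(x,x)$ is compact.} It suffices to show $i_\mu$ is compact, since then $L_\mu = i_\mu i_\mu^\ast$ is compact. Take a sequence $(f_n) \subset \hh_K$ with $\no{f_n}_K \le 1$; pass to a weakly convergent subsequence $f_n \rightharpoonup f$ in $\hh_K$. For each $x$, the operator $\ve_x^\ast$ is compact, because $\ve_x^\ast\ve_x = K(x,x)$ is compact and hence so are $\ve_x$ and $\ve_x^\ast$. Compactness of $\ve_x^\ast$ turns the weak convergence into norm convergence $f_n(x) = \ve_x^\ast f_n \to \ve_x^\ast f = f(x)$ in $\kk$. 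Combined with the uniform pointwise bound $\no{f_n(x)} \le M^{1/2}$, which is $\mu$-integrable since $\mu$ is a probability measure, dominated convergence yields $\no{f_n - f}_2 \to 0$, proving $i_\mu$ is compact.

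The only delicate point is the Bochner integral step: one must verify measurability and norm-integrability of $t \mapsto \ve_t f(t)$ as an $\hh_K$-valued function, and justify pulling $\ve_x^\ast$ inside. Both are routine given separability of $\kk$ (hence of $\hh_K$) and the uniform bound $\no{\ve_t} \le M^{1/2}$ on $\supp\mu$.
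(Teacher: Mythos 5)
Your proof is correct. Note that the paper itself gives no argument here: it simply defers to Propositions~3.3, 4.4 and 4.8 of the reference \cite{cadeto06}, so what you have written is a self-contained substitute rather than a variant of an in-paper proof. Your route is the standard one and all the delicate points are handled properly: weak measurability of $t\mapsto \ve_t f(t)$ follows from $\scal{\ve_t f(t)}{g}_K=\scal{f(t)}{g(t)}$ with $g$ continuous, and Pettis' theorem upgrades this to strong measurability because $\hh_K$ is separable (Proposition~\ref{equivalenza in Co}); the norm bound $\no{\ve_t}\leq M^{1/2}$ holds $\mu$-a.e.\ (it is only needed on $\supp\mu$), so the Bochner integral exists and commutes with the bounded operator $\ve_x^\ast$; and in the compactness step you correctly use that $\ve_x^\ast\ve_x=K(x,x)$ compact forces $\ve_x$ (hence $\ve_x^\ast$) compact, that compact operators send weakly convergent sequences to norm-convergent ones, and that the constant $4M$ dominates $\no{f_n(x)-f(x)}^2$ on $\supp\mu$ so dominated convergence applies. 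One cosmetic remark: the final claim of the statement concerns $L_\mu$ (the symbol $L_K$ there is a typo in the paper), and proving compactness of $i_\mu$, as you do, is slightly stronger than what is asserted and immediately yields compactness of $L_\mu=i_\mu i_\mu^\ast$.
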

The fact that $L_K$ is a compact operator implies that there is a
family $(f_i)_{i\in I}$ of eigenvectors in $\CXK$ and a family
$(\sigma_i)_{i\in I}$ of eigenvalues in $]0,\infty[$ such that $(f_i)_{i\in I}$
is an orthonormal basis of $\ker{L_\mu}^\perp=\overline{\ran{L_\mu}}$ and
\begin{equation}\label{spectral}
L_\mu f_i  =  \sigma_if_i.
\end{equation}
With this notation we are ready to state Mercer Theorem for vector
valued kernels. Its  proof is consequence of Proposition~6.1 and
Theorem~6.3 of~\cite{cadeto06}. 
\begin{proposition}\label{mercer}
Let  $\mu$ be a probability measure with $\supp{\mu}=X$. Suppose  $K$ is 
a Mercer kernel such that $\sup_{x\in X}\no{K(x,x)}<\infty$, and $K(x,x)$ is 
a compact operator $\forall x\in X$. With the notation
of$\eqn{spectral}$, we have that
\begin{align}
   &\hh_K =  \set{f\in\CXK\cap \ker{L_\mu}^\perp\mid \sum_{i\in I}
    \frac{|\scal{f}{f_i}_2|^2}{\sigma_i}<\infty} \label{mercer2}\\
   &\scal{f}{g}_K  =  \sum_{i\in I}
    \frac{\scal{f}{f_i}_2\scal{f_i}{g}_2}{\sigma_i}\label{mercer3}\\
\label{mercer1}
  & K(x,t) = \sum_{i\in I} \sigma_i f_i(x) \otimes \overline{f_i(t)}
\end{align}
where the last series converges in the strong operator topology of $\BK$.
\end{proposition}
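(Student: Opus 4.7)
The strategy is to lift the spectral decomposition of $L_\mu$ to an orthonormal basis of $\hh_K$ through the adjoint $i_\mu^\ast$, and then to read off \eqref{mercer2}--\eqref{mercer1} from the corresponding expansions.

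First, by Proposition~\ref{integral} the operator $L_\mu=i_\mu i_\mu^\ast$ is compact, positive and self-adjoint on $\LduemuXK$, which justifies the spectral decomposition \eqref{spectral}; from the general identity $\overline{\ran{T T^\ast}}=\overline{\ran{T}}$ valid for any bounded $T$, one gets $\ker{L_\mu}^\perp=\overline{\ran{L_\mu}}=\overline{\ran{i_\mu}}$. I would then check that $i_\mu$ is injective: any $f\in\ker{i_\mu}$ is a continuous function which vanishes $\mu$-a.e., and the hypothesis $\supp\mu=X$ forces $f\equiv 0$. Setting $e_i=\sigma_i^{-1/2}\,i_\mu^\ast f_i\in\hh_K$, a short computation using $i_\mu i_\mu^\ast f_i=\sigma_i f_i$ yields $\scal{e_i}{e_j}_K=\delta_{ij}$; since $\overline{\ran{i_\mu^\ast}}=\ker{i_\mu}^\perp=\hh_K$ and $(f_i)$ is total in $\overline{\ran{i_\mu}}$, the family $(e_i)$ is an orthonormal basis of $\hh_K$.

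Second, I would identify the pointwise representative of $e_i$. Proposition~\ref{integral} gives $(i_\mu^\ast f_i)(x)=\int_X K(x,t)f_i(t)\,\de\mu(t)=(L_\mu f_i)(x)=\sigma_i f_i(x)$ for every $x\in X$, so $e_i=\sigma_i^{1/2} f_i$ as continuous functions. For $f\in\hh_K$ the Fourier coefficient in this basis is $\scal{f}{e_i}_K=\sigma_i^{-1/2}\scal{i_\mu f}{f_i}_2=\sigma_i^{-1/2}\scal{f}{f_i}_2$, so Parseval yields $\no{f}_K^2=\sum_i|\scal{f}{f_i}_2|^2/\sigma_i$, its polarized version \eqref{mercer3}, and the fact that $i_\mu f\in\overline{\ran{i_\mu}}=\ker{L_\mu}^\perp$, which establishes the forward inclusion in \eqref{mercer2}. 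For the reverse inclusion, given $f\in\CXK\cap\ker{L_\mu}^\perp$ with $\sum_i|\scal{f}{f_i}_2|^2/\sigma_i<\infty$, define $g=\sum_i\sigma_i^{-1/2}\scal{f}{f_i}_2\,e_i\in\hh_K$; then $i_\mu g=\sum_i\scal{f}{f_i}_2\,f_i=f$ in $\LduemuXK$, the last equality because $(f_i)$ is an orthonormal basis of $\ker{L_\mu}^\perp$. Both $f$ and $g$ are continuous and agree $\mu$-a.e.; by $\supp\mu=X$ they coincide pointwise, whence $f=g\in\hh_K$.

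Third, for the kernel formula I would expand $\ve_t y\in\hh_K$ in the basis $(e_i)$: the identity $\scal{\ve_t y}{e_i}_K=\scal{y}{\ve_t^\ast e_i}_\kk=\scal{y}{e_i(t)}_\kk$ combined with the reproducing property gives
\[K(x,t)y=(\ve_t y)(x)=\sum_i\scal{y}{e_i(t)}_\kk\,e_i(x)=\sum_i\sigma_i\bigl(f_i(x)\otimes\overline{f_i(t)}\bigr)y,\]
with pointwise convergence in $\kk$ for all $x,t\in X$ and $y\in\kk$, which is exactly strong operator convergence in $\BK$. The main subtle point throughout is moving between the $L^2$ and RKHS pictures: the hypothesis $\supp\mu=X$ is essential in order to promote $\mu$-a.e. equalities to pointwise equalities among continuous functions, and hence in order to know both that $i_\mu$ is injective and that the series constructed from $L^2$-data reproduces the correct continuous representative.
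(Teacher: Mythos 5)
Your proof is correct, and it is essentially the standard argument: the paper itself gives no inline proof (it defers to Proposition~6.1 and Theorem~6.3 of the cited reference \cite{cadeto06}), and the route you take --- transporting the spectral basis $(f_i)$ of $L_\mu=i_\mu i_\mu^\ast$ through $i_\mu^\ast$ to obtain the orthonormal basis $(\sqrt{\sigma_i}f_i)$ of $\hh_K$, using $\supp{\mu}=X$ to upgrade $\mu$-a.e.\ identities between continuous functions to pointwise ones --- is exactly the strategy of that reference. You correctly isolate the two places where $\supp{\mu}=X$ is indispensable (injectivity of $i_\mu$ and identification of continuous representatives), so nothing is missing.
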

Equations\eqn{mercer2} and\eqn{mercer3} imply that
$(\sqrt{\sigma_i}f_i)_{i\in I}$ is an orthonormal basis in
$\hh_K$. In particular the vectors $\sqrt{\sigma_i}f_i$
are $\ell_2$-linearly independent in $\kkx$, namely, if
$(c_i)_{i\in I}$ is a family such that $\sum_{i\in I}|c_i|^2<\infty$ and
$\sum_{i\in I} c_i\sqrt{\sigma_i}f_i(x)=0$ for all $x\in X$, then  $c_i=0$ for all
$i\in I$.

As said at the beginning of Section~\ref{submercer}, Proposition~\ref{mercer}
gives a feature operator, which is often used in learning theory.
\begin{example}\label{ex_mercer}
With the assumptions and notations of Proposition~$\ref{mercer}$, 
the reproducing kernel Hilbert space $\hh_K$ is unitarily equivalent
to $\ker{L_\mu}^\perp=\overline{\ran{L_\mu}}$ by means of the
feature operator 
\begin{equation}
\label{Wex1}
(Wf)(x)=\sum_{i\in I}\sqrt{\sigma_i}f_i(x)\scal{f}{f_i}_2=
(L_\mu^{\frac 12}f)(x)\,,\quad f\in \LduemuXK\,.
\end{equation}
\end{example}
\begin{proof}
Given $x\in X$, define
\[ \ga_x:\kk\to\LduemuXK\qquad \ga_x y = \sum_{i\in
  I}\sqrt{\sigma_i}\scal{y}{f_i(x)}f_i,\]
which is well defined since $(f_i)_{i\in I}$ is orthonormal
family of continuous functions and\eqn{mercer1} ensures that
$\sum_{i\in I}\sigma_i|\scal{y}{f_i(x)}|^2<\infty$. Using\eqn{mercer1}
again, one checks that $\ga_x^\ast\ga_t=K(x,t)$. The fact that 
feature operator is given by\eqn{Wex1} is clear by definition of
$\ga_x$. 
Since $\ker{W}=\ker{L_\mu}$, $W$ is a unitary operator from
$\ker{L_\mu}^\perp$ onto $\hh_K$.    
\end{proof}

\subsection{Trivial examples}
We give two examples of {\em trivial} vector valued kernels.
\begin{example}\label{operatore}
Let $B\in\BK$ be a positive operator and define $K(x,t)=B$ for all $x,t\in
X$,  then $K$ is a $\kk$-reproducing kernel, $\hh_K$ is unitarily
equivalent to $\ker{B}^\perp=\overline{\ran{B}}$ by means of the
feature operator 
\[ (Wy)(x)=B^{\frac12}y\qquad x\in X,\ y\in\ker{B}^\perp.\]
The kernel $K$ is of Mercer type and it is a $\Co$-kernel if and only
if $X$ is compact.
\end{example}
\begin{proof}
Apply Proposition~$\ref{featuremap}$ with $\hh=\ker{B}^\perp$ and
$\ga_x=B^\frac{1}{2}$. Since $B$ is injective on $\hh$, then $W$ is
unitary.  The claims about the continuity are clear.
\end{proof}
\begin{example}\label{uno}
Let $f:X\to \kk$, $f\neq 0$.
Define $K(x,t)= f(x)\otimes\overline{f(t)}$, then $K$ is a
reproducing kernel, $\hh_K$ is unitarily equivalent to $\complex$ by
means of the feature operator
\[ (W c)(x)=cf(x) \qquad x\in X,\ c\in\complex.\]
In particular $K$ is Mercer $[$resp.$\Co]$ if and only if $f\in\CXK$
$[$resp. $f\in\CoXK]$.
\end{example}
\begin{proof}
Apply Proposition~$\ref{featuremap}$ with $\hh=\complex$ and
$\ga_xy=\scal{y}{f(x)}$. Since $f\neq 0$, $W$ is injective.
The characterization about Mercer and $\Co$ is trivial.
\end{proof}

\section{Operations with kernels}\label{tre}
In this section we characterize reproducing kernel Hilbert spaces
whose kernel is defined by {\em algebraic operations}, like sum,
product and composition. Most of the results are well known for scalar
kernels, whereas for vector valued kernels they are consequences of the
theory developed in \cite{sch64} in a more general context. We provide
a direct and simple proof 
of these results,  based on
the use of suitable feature maps. In some cases, our approach can be
of  interest 
also in the scalar case, like, for example, in proving  Schur lemma
about the product of kernels.  \\
As an application, we present a large supply of examples of vector
valued reproducing kernels and, for most of them, we realize the
corresponding RKHS by elegant and simple structures. This
characterization will be used to analyze some learning algorithm, like
regularized least-squares, in the vector valued setting.

\subsection{Sum of kernels}
The following result extends to vector valued kernels the relation between
sum of kernels and sum of the corresponding reproducing kernel Hilbert
spaces.
\begin{proposition}\label{suma}
Denote by $I$ a countable set and let $(K^i)_{i\in I}$ be a family of $\kk$-reproducing kernels such that 
\[\sum_{i\in I} \scal{K^i(x,x)y}{y}<\infty\qquad \forall y\in \kk\text{
  and }\forall x\in X.\] 
Given $x,t\in X$ the series $\sum_{i\in I} K^i(x,t)$ converges to a
bounded operator $K(x,t)$ in the strong operator topology, and the map
$K:X\times X\to\BK$ defined by
\[ K(x,t)y=\sum_{i\in I} K^i(x,t)y \]
is a $\kk$-reproducing kernel. The corresponding space $\hh_K$ is embedded in
$\bigoplus_{i\in I}\hh_{K^i}$ by means of the feature operator
\[ W(f)(x)=\sum_{i\in I} f_i(x)\qquad\text{where } f=\oplus_{i\in I} f_i\]
where the sum converges in norm.\\
Moreover, if each $K^i$ is a Mercer kernel $[$resp. $\Co$-kernel$]$  and
$x\mapsto \sum_{i\in 
  I}\no{K^i(x,x)}$ is locally bounded $[$resp. bounded$]$, then $K$ is
Mercer  $[$resp. $\Co]$.
\end{proposition}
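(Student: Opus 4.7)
The plan is to exhibit $K$ as a kernel of the form $\ga_x^\ast \ga_t$ for a feature map into the Hilbert space direct sum $\hh := \bigoplus_{i\in I} \hh_{K^i}$, and then read off all the assertions from Proposition~\ref{featuremap} and Corollary~\ref{featureco}. Denote by $\ve^i_x\in\elle{\kk;\hh_{K^i}}$ the canonical feature map of $K^i$ as in\eqn{ern}, so that $(\ve^i_x)^\ast \ve^i_t = K^i(x,t)$ and $(\ve^i_x)^\ast g = g(x)$ for $g\in\hh_{K^i}$ by\eqn{silvio}. Define
\[
\ga_x\colon\kk\to\hh,\qquad \ga_x y = \bigoplus_{i\in I} \ve^i_x y,
\]
which belongs to $\hh$ because $\sum_i \no{\ve^i_x y}^2 = \sum_i \scal{K^i(x,x)y}{y} < \infty$ by hypothesis.

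To show boundedness of $\ga_x$, I would apply the uniform boundedness principle to the increasing sequence of positive operators $B_n(x) := \sum_{i\leq n} K^i(x,x)\in\BK$: pointwise boundedness of $\scal{B_n(x)y}{y}$ in $n$ gives, via polarization, $M(x) := \sup_n \no{B_n(x)}<\infty$, whence $\no{\ga_x y}^2\leq M(x)\no{y}^2$. Proposition~\ref{featuremap} then yields a reproducing kernel $\ga_x^\ast \ga_t$ with weak matrix elements $\scal{\ga_x^\ast \ga_t y}{z} = \sum_i \scal{K^i(x,t)y}{z}$. To upgrade this to strong-operator convergence $K(x,t)y := \sum_i K^i(x,t)y = \ga_x^\ast \ga_t y$ in $\kk$, I would use the Cauchy--Schwarz estimate
\[
\no{\sum_{n<i\leq m} K^i(x,t) y}^2 \leq M(x)\sum_{n<i\leq m}\scal{K^i(t,t)y}{y},
\]
whose right-hand side is a tail of a convergent series. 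The feature operator then takes the stated form $(Wu)(x) = \ga_x^\ast u = \sum_i (\ve^i_x)^\ast f_i = \sum_i f_i(x)$ for $u = \bigoplus_i f_i$, with $\kk$-norm convergence coming from continuity of $\ga_x^\ast$ applied to the finite truncations of $u$.

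For the Mercer and $\Co$ parts I would invoke Corollary~\ref{featureco} applied to $\ga$. A total set in $\hh$ is the union over $i\in I$ of the one-component embeddings of $\hh_{K^i}$; the operator $W$ sends such a vector (with $g$ in the $i$-th slot) to $g\in\hh_{K^i}$, which lies in $\CXK$ (resp.\ $\CoXK$) because $K^i$ is Mercer (resp.\ $\Co$). Moreover, using $\scal{K^i(x,x)y}{y}\leq\no{K^i(x,x)}\no{y}^2$ one obtains $\no{\ga_x}^2 = \no{K(x,x)} \leq \sum_{i\in I}\no{K^i(x,x)}$, and the right-hand side is locally bounded (resp.\ bounded) by assumption. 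The main technical step is the uniform boundedness argument, which simultaneously makes $\ga_x$ bounded and furnishes the uniform constant needed for the strong-operator convergence of the kernel series; with this in hand, everything else is a routine application of the feature-map framework.
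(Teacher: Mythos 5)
Your proof is correct and follows essentially the same route as the paper: the feature space $\bigoplus_{i\in I}\hh_{K^i}$, the feature map $\ga_x=\oplus_i K^i_x$, and the appeal to Proposition~\ref{featuremap} and Corollary~\ref{featureco} with the total set $\bigcup_i\hh_{K^i}$ are exactly the paper's argument. The only difference is that you spell out the uniform boundedness step guaranteeing $\ga_x\in\elle{\kk;\hh}$ and the tail estimate for strong-operator convergence, which the paper delegates to a citation.
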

\begin{proof}
We apply Proposition~\ref{featuremap}.
Letting $\hh=\bigoplus_{i\in I}\hh_{K^i}$, we regard each
$\hh_{K^i}$ as a closed subspace of $\hh$ so that any two of them are
orthogonal. Given $x\in X$,  we define  
the bounded operator $\ga_x:\kk\to\hh$
by $\ga_x=\sum_{i\in I}K^i_x$, where the series converges in the strong
operator topology since, given $y\in\kk$,
\[ \sum_{i\in I} \no{K^i_xy}_{K^i}^2 =\sum_{i\in I}\scal{K^i(x,x)y}{y}<\infty\]
by assumption, see \cite{Conway}.  Given $i\in I$ and  $f_i\in\hh_{K^i}$, then
\[\scal{\ga_x^\ast
  f_i}{y}=\scal{f_i}{K^i_xy}_{K^i}=\scal{f_i(x)}{y}\]   
by reproducing property\eqn{silvio}, so that $\ga_x^\ast
f_i=f_i(x)$. Since $\ga_x^\ast$ is continuous, 
for any $f=\oplus_{i\in I} f_i$, 
\[(Wf)(x)=\ga_x^\ast f=\sum_{i\in I}\ga_x^\ast f_i=\sum_{i\in I} f_i(x)\]
where the series converges in norm.\\ Finally,  
$K(x,t)y=\ga_x^\ast\ga_t y=\sum_{i\in I}(\ga_t y)_i(x)=\sum_{i\in
  I} K^i(x,t)y$, that is $K(x,t)=\sum_{i\in
  I} K^i(x,t)$ in the strong operator topology.\\
The second part is a consequence of Corollary~\ref{featureco} with
$\mathcal S =\bigcup_{i\in I}\hh_{K^i}$.
\end{proof}
As an application, we have the following example.
\begin{example}
Let $(f_i)_{\in I}$ a countable family of functions $f_i:X\to \kk$ such
that $\sum_{i\in I} |\scal{f_i(x)}{y}|^2$ is finite for all $x\in X$
and $y\in Y$. Define $K:X\times X\to\BK$ as
\begin{equation*}
 K(x,t)=\sum_{i\in I} f_i(x)\otimes \overline{f_i(t)}\,.
\end{equation*}
Then, the sum converges in the strong operator topology, $K$ is a
reproducing kernel and  
\begin{equation}
  \label{4}
\hh_K=\set{f\in\kkx\mid f(x)=\sum_{i\in I}c_i f_i(x), \sum_{i\in I}
  |c_i|^2<\infty}.
\end{equation}
In particular $(f_i)_{i\in I}$ is a normalized tight frame in
$\hh_K$. It is an orthonormal basis if and only if  $(f_i)_{i\in I}$ is
$\ell_2$-linearly independent in $\kkx$. 
\end{example}
\begin{proof}
Apply Proposition~$\ref{suma}$, with 
$K^i(x,t)=f_i(x)\otimes \overline{f_i(t)}$, observing that
$\hh_{K^i}=\complex$ by Example~$\ref{uno}$,  so that $\oplus_{i\in
  I}\hh_{K_i}\simeq\ell_2$. The feature operator is explicitly given by
\[ W(c)(x)=\sum_{i\in I} c_i f_i(x)\qquad\text{where }
c=(c_i)_{i\in I},\ \sum_{i\in I} |c_i|^2<\infty,\]
so that\eqn{4} is clear. If $(e_i)_{i\in I}$ is the canonical
orthonormal basis
of $\ell_2$, then $We_i=f_i$ and, for any $f\in\hh_K$,
$$  
\no{f}^2_K= \no{W^\ast f}^2_{\ell_2}= \sum_i |\scal{W^\ast
  f}{e_i}_{\ell_2}|^2 =\sum_i |\scal{f}{f_i}_K|^2\,, 
$$
{\em i.e.} $(f_i)_{i\in I}$ is a normalized tight frame in
$\hh_K$. Clearly, it is an orthonormal basis if and
only if $W$ is unitary, {\em i.e.} W is injective. This is precisely
the condition that $(f_i)_{i\in I}$ is $\ell_2$-linearly independent
in  $\kkx$.   
\end{proof}
Proposition~\ref{mercer} shows that any RKHS with a bounded compact
Mercer kernel can be realized 
as in the above example, where the functions $f_i$ are the
eigenfunctions (with $\no{f_i}_2^2=\sigma_i$) of the integral operator $L_\mu$
with eigenvalues $\sigma_i>0$, and $\mu$ is any probability measure
with $\supp{\mu}=X$,  see\eqn{spectral}. 

\subsection{Composition with maps}
We now describe the reproducing kernel Hilbert spaces whose kernel is
defined in terms of a {\em mother} kernel and suitable maps acting
either on the input  
space $X$  or on the output space $\kk$. The following result
characterizes the action of a bounded operator on $\kk$. 
\begin{proposition}\label{actionw}
Let $K$ be a $\kk$-reproducing kernel. 
Let $\kk^\prime$ be another Hilbert space and $w:\kk\to\kk^\prime$ be a
bounded operator.  Define 
\[K_w:X\times X\to{\mathcal L}(\kk^\prime)\qquad
K_w(x,t)=wK(x,t)w^\ast,\] 
then $K_w$ is a $\kk^\prime$ reproducing kernel and $\hh_{K_w}$ is embedded in 
$\hh_K$ by means of
\[ W:\, \hh_K \longrightarrow \hh_{K_w}\,,\quad (Wf)(x)=w f(x)\qquad x\in X.\] 
If $w$ is injective, $\hh_{K_w}$ is unitarily equivalent to $\hh_K$.
Moreover, if $K$ is Mercer $[$resp. $\Co$$]$, then $K_w$ is Mercer
$[$resp. $\Co$$]$. 
\end{proposition}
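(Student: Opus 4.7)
My plan is to apply Proposition~\ref{featuremap} with feature space $\hh_K$ and feature map $\ga_x : \kk' \to \hh_K$ defined by $\ga_x = \ve_x w^\ast$, where $\ve_x$ is the evaluation operator\eqn{ern} attached to the kernel $K$. Since $\ve_x^\ast \ve_t = K(x,t)$, a direct computation gives
\[
\ga_x^\ast \ga_t \;=\; w\, \ve_x^\ast \ve_t\, w^\ast \;=\; w\, K(x,t)\, w^\ast \;=\; K_w(x,t),
\]
so Proposition~\ref{featuremap} certifies that $K_w$ is a $\kk'$-reproducing kernel. The associated feature operator then reads, using the reproducing property\eqn{silvio},
\[
(Wf)(x) \;=\; \ga_x^\ast f \;=\; w\, \ve_x^\ast f \;=\; w\, f(x),\qquad f\in\hh_K,
\]
and Proposition~\ref{featuremap} tells us that $W$ is a partial isometry of $\hh_K$ onto $\hh_{K_w}$, which is the ``embedding'' claimed in the statement.

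For the unitarity under injectivity of $w$, I would simply observe that $Wf = 0$ means $w\, f(x) = 0$ for every $x\in X$; injectivity of $w$ then forces $f(x)=0$ for all $x$, i.e.\ $f\equiv 0$ in $\hh_K$. Hence $\ker W = \{0\}$ and the partial isometry $W$ is actually a unitary isomorphism from $\hh_K$ onto $\hh_{K_w}$.

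The Mercer and $\Co$ assertions follow by applying Corollary~\ref{featureco} to the same feature map $\ga_x = \ve_x w^\ast$, with total set $\mathcal S = \hh_K$: if $K$ is Mercer (resp.\ $\Co$) and $f \in \hh_K \subset \CXK$ (resp.\ $\CoXK$), then $Wf = w\circ f$ is continuous because $w$ is bounded, and in the $\Co$ case the bound $\no{w f(x)} \leq \no{w}\no{f(x)}$ implies that $wf$ vanishes at infinity. Finally the estimate $\no{\ga_x} \leq \no{w}\no{K(x,x)}^{1/2}$ transfers local boundedness (resp.\ boundedness) of $x\mapsto \no{K(x,x)}$ to $x\mapsto\no{\ga_x}$, so Corollary~\ref{featureco} applies. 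I do not foresee a real obstacle here: once the right feature map $\ga_x = \ve_x w^\ast$ is chosen, everything reduces to bookkeeping through Proposition~\ref{featuremap} and Corollary~\ref{featureco}.
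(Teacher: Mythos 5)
Your proof is correct and takes essentially the same route as the paper's: the paper likewise applies Proposition~\ref{featuremap} with feature space $\hh_K$ and feature map $\ga_x=K_xw^\ast$, reads off $(Wf)(x)=wf(x)$, and observes that injectivity of $w$ forces $\ker{W}=\{0\}$ so that the partial isometry $W$ is unitary. The only difference is one of detail: the paper dismisses the Mercer/$\Co$ assertion as evident, whereas you spell out the verification via Corollary~\ref{featureco}, which is exactly the intended justification.
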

\begin{proof}
Let $\ga_x:\kk^\prime\to\hh_K$, $\ga_x=K_xw^\ast $ and apply
Proposition~\ref{featuremap} with 
$\hh=\hh_K$. The feature operator from $\hh_K$ onto $\hh_{K_w}$ is
explicitly given by $(Wf)(x)=\ga_x^\ast f=wf(x)$. If $w$ is injective,
then $W$ is unitary.  The second claim is evident. 
\end{proof}
We now study the action of an arbitrary map on $X$.
\begin{proposition}\label{olanda}
Let $K$ be a $\kk$-reproducing kernel on $X$.
Let $T$ be another locally compact second countable topological space,
and  $\Psi:T\to X$. Define 
\[K_\Psi:T\times T\to\BK\qquad K_\Psi(t_1,t_2)=K(\Psi(t_1),\Psi(t_2))\qquad t_1,t_2\in T.\]
Then $K_\Psi$ is a $\kk$-reproducing kernel on $T$, the space $\hh_{K_\Psi}$ is
unitarily equivalent to  
\begin{equation*}
\begin{split}
 \Spannochiuso{K_x y\mid x\in\ran{\Psi}} & =
\set{f\in\hh_K \mid f(x)=0 \ \forall x\in\ran{\Psi}}^\perp 
\end{split}
 \end{equation*}
by means of the feature operator
\[W:\, \hh_K \longrightarrow \hh_{K_\Psi} \qquad W(f)(t) =
f(\Psi(t))\qquad f\in\hh_K,\ t\in T.\] 
If $K$ is a Mercer kernel and $\Psi$ is continuous, then $K_{\Psi}$ is
Mercer. If $K$ is a $\Co$-kernel and $\Psi$ is continuous and proper, then $K_\Psi$
is $\Co$.
\end{proposition}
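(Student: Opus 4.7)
The plan is to build $\hh_{K_\Psi}$ from a natural feature map taking values in the mother space $\hh_K$, and then read off all the stated identifications from Proposition~\ref{featuremap} and Corollary~\ref{featureco}.

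First I would set $\hh = \hh_K$ and define the feature map $\gamma_t := \ve_{\Psi(t)} \in \elle{\kk; \hh_K}$ for every $t \in T$. Using the identity $\ve_x^\ast \ve_{x'} = K(x,x')$ recalled in Section~\ref{seckernel}, one immediately checks that $\gamma_t^\ast \gamma_{t'} = K(\Psi(t),\Psi(t')) = K_\Psi(t,t')$, so Proposition~\ref{featuremap} applies: $K_\Psi$ is a $\kk$-reproducing kernel, and the associated feature operator $W:\hh_K\to\hh_{K_\Psi}$ is given by $(Wf)(t) = \gamma_t^\ast f = \ve_{\Psi(t)}^\ast f = f(\Psi(t))$, where the last equality is the reproducing property~(\ref{silvio}).

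Next, Proposition~\ref{featuremap} tells me that $W$ restricts to a unitary from $\ker W^\perp$ onto $\hh_{K_\Psi}$, with
\[\ker W^\perp = \Spannochiuso{\gamma_t y \mid t\in T,\ y\in\kk} = \Spannochiuso{\ve_x y \mid x\in\ran\Psi,\ y\in\kk}.\]
The alternative description as $\{f\in\hh_K \mid f(x)=0\ \forall x\in\ran\Psi\}^\perp$ is then immediate, since $\ker W = \{f\in\hh_K\mid f\circ\Psi\equiv 0\} = \{f\mid f|_{\ran\Psi}\equiv 0\}$ and we are taking its orthogonal complement.

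For the final assertions about $K_\Psi$ being Mercer (respectively $\Co$), I would apply Corollary~\ref{featureco} with the total set $\mathcal S = \{\ve_x y\mid x\in X,\ y\in\kk\}$ of $\hh_K$. On such vectors, $(W(\ve_x y))(t) = K(\Psi(t),x)y$ is the composition of $\ve_x y\in\mathcal{C}(X;\kk)$ (resp.\ $\mathcal{C}_0(X;\kk)$, by Proposition~\ref{equivalenza in Co}) with $\Psi$; continuity of $\Psi$ yields continuity on $T$, and properness of $\Psi$ additionally yields vanishing at infinity, since for $\epsilon>0$ the set $\{t\mid \no{(\ve_x y)(\Psi(t))}\geq\epsilon\}$ is the $\Psi$-preimage of a compact subset of $X$, hence compact in $T$. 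In parallel, $\no{\gamma_t}^2=\no{K(\Psi(t),\Psi(t))}$ is locally bounded on $T$ when $K$ is Mercer (continuous images of compacts being compact) and globally bounded when $K$ is $\Co$. The main subtlety is precisely the use of properness in the $\Co$-case: without it, a merely continuous $\Psi$ whose image sits in a compact set could destroy vanishing at infinity.
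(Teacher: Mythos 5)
Your proposal is correct and follows exactly the paper's route: it applies Proposition~\ref{featuremap} with $\hh=\hh_K$ and $\gamma_t=K_{\Psi(t)}$, identifies $\ker W$ as the functions vanishing on $\ran\Psi$, and handles the Mercer/$\Co$ claims via Corollary~\ref{featureco} (details the paper dismisses as ``clear'' but which you correctly fill in, including the role of properness).
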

\begin{proof}
Apply Proposition~\ref{featuremap} with $\hh=\hh_K$ and, for any $t\in T$, 
$\ga_t=K_{\Psi(t)}$, observing that $\ker{W}=\set{f\in\hh_K \mid
  f(x)=0 \ \forall x\in\ran{\Psi}}$.  \\ 
The claims about Mercer and $\Co$-kernels are clear.
\end{proof}

In the above proposition observe that $\ker{W}^\perp$ can be
identified with the quotient 
space $\hh_K/\ker{W}$, so that one has also the  natural identification
\begin{align}
\label{winter}
\hh_{K_\Psi}
  \simeq \set{f_{\mid{\ran{\Psi}}}\mid f\in \hh_K}  
\end{align}
where, the r.h.s.~is endowed with the norm
\[
\no{f_{\mid\ran{\Psi}}}=\inf \set{\no{g }_{K}\mid
  g\in\hh_K,\ g_{\mid{\ran{\Psi}}}=f_{\mid\ran{\Psi}}}
\]

As a consequence,  we describe the
relation between a kernel and its restriction to a subset.
\begin{corollary}\label{HZ}
Let $X_0$ be a subset of $X$. Let $K_{X_0}$ be the restriction of
$K$ to $X_0\times X_0$, then
\begin{equation*}
\hh_{K_{X_0}}= \set{f_{\mid_{X_0}}\ :\ f\in \hh_K}.
\end{equation*}
If $K$ is Mercer and $X_0$ is locally closed, then $K_{X_0}$ is Mercer.
If $K$ is $\Co$ and $X_0$ is closed, then $K_{X_0}$ is $\Co$.
\end{corollary}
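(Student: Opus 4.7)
The plan is to obtain this corollary as a direct specialization of Proposition \ref{olanda}. Take $T = X_0$ equipped with the subspace topology and let $\Psi: X_0 \hookrightarrow X$ be the inclusion map. With this choice, the kernel $K_\Psi$ produced by Proposition \ref{olanda} coincides pointwise with $K_{X_0}$, and $\ran \Psi = X_0$, so the associated feature operator acts as $(Wf)(x) = f(\Psi(x)) = f(x)$ for $x \in X_0$; in other words, $W$ is literally the restriction map $f \mapsto f_{\mid X_0}$. The identification \eqn{winter} then gives $\hh_{K_{X_0}} = \set{f_{\mid X_0} \mid f \in \hh_K}$, which is the first assertion.

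For the Mercer case I need to verify the hypotheses of Proposition \ref{olanda}: that $T = X_0$ is locally compact second countable and that $\Psi$ is continuous. Second countability is inherited from $X$, and the inclusion map is trivially continuous. Local compactness of $X_0$ in the subspace topology is exactly what the hypothesis \emph{locally closed} ensures, since a subset of a locally compact Hausdorff space is locally compact in the subspace topology if and only if it can be written as the intersection of an open and a closed set. Granted these hypotheses, Proposition \ref{olanda} yields that $K_{X_0}$ is Mercer whenever $K$ is.

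For the $\Co$ case the additional requirement is that $\Psi$ be proper. When $X_0$ is closed in $X$, for any compact $C \subset X$ the preimage $\Psi^{-1}(C) = X_0 \cap C$ is closed inside the compact set $C$, and hence compact; so $\Psi$ is proper. Since closed subsets are in particular locally closed, all other hypotheses are already verified, and Proposition \ref{olanda} delivers the $\Co$ conclusion. The only step requiring any thought is the characterization of subspace local compactness as local closedness; the rest of the argument is a mechanical unpacking of the preceding proposition in the case of an inclusion map.
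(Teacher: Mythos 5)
Your proof is correct and follows exactly the paper's route: the paper's own proof is the one-line "Apply Proposition~\ref{olanda} and identification (\ref{winter}), with $\Psi$ the canonical inclusion of $X_0$ in $X$." Your additional verifications --- that local closedness of $X_0$ gives local compactness in the subspace topology, and that the inclusion of a closed subset is proper --- are precisely the details the paper leaves implicit, so you have simply made the same argument more explicit.
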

\begin{proof}
Apply  Proposition~\ref{olanda} and identification (\ref{winter}), with $\Psi$  the canonical
inclusion of $X_0$ in $X$.  
\end{proof}
We end this part by describing the reproducing kernel Hilbert space
associated with the kernel proposed in \cite{camipoyi08}.
\begin{proposition}\label{ex_psi}
Let $\kappa$ be a scalar reproducing kernel on $X$.
Let $T$ be another locally compact second countable
topological space. Let $\Psi_1,\ldots,\Psi_m$ be functions from $T$ to
$X$ and define $K(t_1,t_2)$ as the $m\times m$-matrix
\[ K(t_1,t_2)_{ij}=\kappa(\Psi_i(t_1),\Psi_j(t_2))\qquad i,j=1,\dots,m,\
t_1,t_2\in T.\]
Then $K$ is a $\complex^m$-reproducing kernel on $T$, the space $\hh_K$ is
embedded in $\hh_\kappa$ by means
of the feature operator
\[
W:\, \hh_\kappa \longrightarrow \hh_K \qquad 
\left(W(\varphi)(t)\right)_i = \varphi(\Psi_i(t))\qquad \varphi\in\hh_\kappa,\
t\in T.\]
If one of $\Psi_1$, $\ldots$, $\Psi_m$ is surjective, then $W$ is
unitary.
\end{proposition}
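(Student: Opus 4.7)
The strategy is to realize $K$ in the form $\gamma_t^\ast \gamma_t$ and then invoke Proposition~\ref{featuremap}. Write $k_x := \ve_x\cdot 1\in\hh_\kappa$ for the canonical element with $k_x(s)=\kappa(s,x)$. For each $t\in T$, define
\[
\gamma_t:\complex^m\longrightarrow\hh_\kappa,\qquad \gamma_t y=\sum_{i=1}^m y_i\, k_{\Psi_i(t)},
\]
which is a finite sum of vectors in $\hh_\kappa$, hence bounded.

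Next, I would compute the adjoint by pairing against $\varphi\in\hh_\kappa$ and using the scalar reproducing property:
\[
\scal{\gamma_t^\ast\varphi}{y}_{\complex^m}
=\scal{\varphi}{\gamma_t y}_{\hh_\kappa}
=\sum_{i=1}^m\overline{y_i}\,\scal{\varphi}{k_{\Psi_i(t)}}_{\hh_\kappa}
=\sum_{i=1}^m\overline{y_i}\,\varphi(\Psi_i(t)),
\]
so $(\gamma_t^\ast\varphi)_i=\varphi(\Psi_i(t))$. A second application of reproducing property gives
\[
(\gamma_{t_1}^\ast\gamma_{t_2}y)_i
=(\gamma_{t_2}y)(\Psi_i(t_1))
=\sum_{j=1}^m y_j\,\kappa(\Psi_i(t_1),\Psi_j(t_2))
=(K(t_1,t_2)y)_i,
\]
so that $K(t_1,t_2)=\gamma_{t_1}^\ast\gamma_{t_2}$. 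By Proposition~\ref{featuremap}, $K$ is a $\complex^m$-reproducing kernel, and the operator $W\varphi(t)=\gamma_t^\ast\varphi$, which by the computation above has $i$-th component $\varphi(\Psi_i(t))$, is a partial isometry from $\hh_\kappa$ onto $\hh_K$.

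For the last claim, Proposition~\ref{featuremap} also yields
\[
\ker W^\perp=\Spannochiuso{\gamma_t y\mid t\in T,\ y\in\complex^m}
=\Spannochiuso{k_{\Psi_i(t)}\mid t\in T,\ i=1,\dots,m}.
\]
If some $\Psi_{i_0}$ is surjective, then $\{\Psi_{i_0}(t)\mid t\in T\}=X$, and the right hand side already contains $\{k_x\mid x\in X\}$, whose closed linear span is the entire $\hh_\kappa$. Hence $\ker W=\{0\}$, and the partial isometry $W$ is unitary. There is no real obstacle here: the only thing to check carefully is that the adjoint identity matches the given definition of $W$, after which everything reduces to the standard totality of $\{k_x\}_{x\in X}$ in $\hh_\kappa$.
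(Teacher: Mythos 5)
Your proposal is correct and follows essentially the same route as the paper: both apply Proposition~\ref{featuremap} to the feature map $\gamma_t y=\sum_i y_i\,\kappa_{\Psi_i(t)}$ and compute $(\gamma_t^\ast\varphi)_i=\varphi(\Psi_i(t))$. The only cosmetic difference is in the last step, where the paper shows $\ker W=\{0\}$ directly from $\varphi(\Psi_i(t))=0$ for all $t$, while you equivalently observe that $\ker W^\perp$ contains the total set $\set{\kappa_x\mid x\in X}$; these are the same argument in dual form.
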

\begin{proof}
Apply Proposition~\ref{featuremap} with $\hh=\hh_\kappa$ and 
$\ga_t:\complex^m\to \hh_\kappa$,
\[\ga_t(y_1,\ldots,y_m)=\sum_{i=1}^m y_i \kappa_{_{\Psi_i(t)}},\]
so that $\ga_t^\ast(\varphi)_i= \varphi(\Psi_i(t))$. \\
If $\Psi_i$ is surjective for some index $i=1,\dots, m$, the condition
$\varphi(\Psi_i(t))=0$ for all 
$t\in T$ implies that $\varphi(x)=0$ for all $x\in X$, that is, $\varphi=0$. Hence
$W$ is injective and, hence, unitary.        
\end{proof}

\subsection{Product of kernels}
The following proposition extends Schur lemma about
products of reproducing kernels  to the vector valued case.
\begin{proposition}\label{product}
Let $K$ be a $\kk$-kernel and  $\kappa$  a scalar  kernel.
 Define 
\[(\kappa K)(x,t)=\kappa(x,t)K(x,t)\qquad x,t\in X,\]
then $\kappa K$ is a  $\kk$-reproducing
kernel and  $\hh_{\kappa K}$ is
embedded into $\hh_\kappa\otimes \hh_K$ by means of the
feature operator
\[ W(\varphi\otimes f)(x)=\varphi(x) f(x)\qquad\varphi\in\hh_\kappa,\ 
f\in \hh_K.\]
If both $\kappa$ and $K$ are Mercer kernels, so is $\kappa K$, whereas if
\begin{equation}\label{productco}
\sup_{x\in X}\set{\kappa(x,x),\no{K(x,x)}}<\infty\text{ and }
\left\{\begin{array}{ccc}
\kappa_x\in\cccc_0(X) &\text{ and } & K_xv\in\CXK \\  
& \text{or} & \\
\kappa_x\in\cccc(X) & \text{and} & K_xv\in\CoXK
\end{array} \right.
\end{equation}
then $K$ is a $\Co$ kernel.
\end{proposition}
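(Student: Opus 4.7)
The plan is to apply Proposition~\ref{featuremap} with the Hilbert tensor product $\hh = \hh_\kappa \otimes \hh_K$ as feature space and with feature map
\[
\ga_x y \,=\, \kappa_x \otimes (K_x y), \qquad y \in \kk,\ x \in X,
\]
where, with a slight abuse of notation, $\kappa_x$ denotes the element $\kappa(\cdot,x) \in \hh_\kappa$ and $K_x$ is the operator of\eqn{ern} for the kernel $K$. Linearity of $\ga_x$ in $y$ is inherited from $K_x$ and from the tensor product, and the elementary tensor norm identity gives $\no{\ga_x y}^2 = \no{\kappa_x}_\kappa^2 \no{K_x y}_K^2 \leq \kappa(x,x) \no{K(x,x)} \no{y}^2$, so $\ga_x \in \elle{\kk; \hh_\kappa \otimes \hh_K}$. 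The key step is to identify the adjoint on elementary tensors: using the tensor product inner product identity $\scal{\varphi \otimes f}{\kappa_x \otimes K_x y} = \scal{\varphi}{\kappa_x}_\kappa \scal{f}{K_x y}_K$ together with the scalar reproducing property $\scal{\varphi}{\kappa_x}_\kappa = \varphi(x)$ and the vector one $\scal{f}{K_x y}_K = \scal{f(x)}{y}$, one obtains $\ga_x^*(\varphi \otimes f) = \varphi(x) f(x)$. Consequently $\ga_x^* \ga_t y = \kappa(x,t) K(x,t) y$, so Proposition~\ref{featuremap} immediately asserts that $\kappa K$ is a $\kk$-reproducing kernel and that the feature operator is the continuous extension of $\varphi \otimes f \longmapsto \varphi \cdot f$.

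For the regularity statements I apply Corollary~\ref{featureco} to the total set $\mathcal{S} = \{\varphi \otimes f : \varphi \in \hh_\kappa,\ f \in \hh_K\}$. If both $\kappa$ and $K$ are Mercer, every $\varphi$ and every $f$ is continuous by Proposition~\ref{equivalenza in Co}, so $\varphi f \in \CXK$, while $\no{\ga_x}^2 \leq \kappa(x,x) \no{K(x,x)}$ is locally bounded by the same proposition. In the $\Co$ situation\eqn{productco}, the global bound on the diagonals combined with the pointwise estimates $|\varphi(x)| \leq \sqrt{\kappa(x,x)}\,\no{\varphi}_\kappa$ and $\no{f(x)} \leq \sqrt{\no{K(x,x)}}\,\no{f}_K$ forces every element of both RKHSs to be bounded; in either branch of the disjunction one factor lies in $\Co$ while the other is continuous and bounded, so $\varphi f \in \CoXK$, and $\no{\ga_x}$ is globally bounded. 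Corollary~\ref{featureco} then yields the $\Co$ conclusion.

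The only nontrivial point is the adjoint identity $\ga_x^*(\varphi \otimes f) = \varphi(x) f(x)$: once the tensor product inner product has been combined correctly with the two reproducing formulas, everything else reduces to routine verification of the hypotheses of Proposition~\ref{featuremap} and Corollary~\ref{featureco}.
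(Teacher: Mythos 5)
Your proposal is correct and follows essentially the same route as the paper: the same feature space $\hh_\kappa\otimes\hh_K$, the same feature map $\ga_x y=\kappa_x\otimes K_xy$ with adjoint $\ga_x^*(\varphi\otimes f)=\varphi(x)f(x)$, and the same application of Corollary~\ref{featureco} to the total set of elementary tensors. The only difference is that you spell out the boundedness estimates and the case analysis in\eqn{productco} in more detail than the paper does, which is a harmless (indeed welcome) elaboration.
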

\begin{proof}
Let $\hh=\hh_\kappa\otimes \hh_K$. Since $\kappa$ is a scalar kernel,
$\kappa_x\in\hh_\kappa$.  
Define $\ga_x:\kk\to\hh$ by means of $\ga_xy=\kappa_x\otimes K_xy$, then
$\ga_x^\ast(\varphi\otimes f)=\varphi(x) f(x)$.  First claim is a consequence
of Proposition~\ref{featuremap}.\\  
If both $\kappa$ and $K$ are Mercer kernels, clearly $\kappa K$ is Mercer.\\
To prove that if\eqn{productco} hold then $K$ is $\Co$, we apply
Corollary~\ref{featureco} with 
$\mathcal S= \set{\varphi\otimes f\mid 
  \varphi\in\hh_\kappa,\ f\in\hh_K}$, and  observe that
\[\no{\ga_x}\leq\no{\kappa_x}_{\kappa}\no{K_x}\leq C,\] 
by assumption.
\end{proof}
Based on the above results, we characterize the RKHS whose kernel is
given in \cite{camipoyi08}. 
\begin{example}\label{esempioB}
Let $\kappa$ be a scalar reproducing kernel and $B$ a positive bounded
operator on $\kk$. Define $K:X\times X\to\BK$ as
\[K(x,t)=\kappa(x,t) B\qquad x,t\in X\] 
\begin{enumerate}
\item[{\rm (i)}] The map $K$ is a
$\kk$-reproducing kernel and $\hh_K$ is unitarily
equivalent to $\hh_\kappa\otimes \ker{B}^\perp$ by means of the unitary
operator 
\[W(\varphi\otimes y)(x)=\varphi(x) B^{\frac 12}y .\] 
\item[{\rm (ii)}] If $\kappa$ is  Mercer $[$resp. $\Co]$, then $K$
  is  Mercer $[$resp. $\Co]$, too. 
\item[{\rm (iii)}] If there is an orthonormal basis $(y_i)_{i\in I}$  of $\ker{B}^\perp$
such that $By_i=\sigma_i y_i$ (so that $\sigma_i>0$ for all $i\in I$), then $\hh_K$ is unitarily
equivalent to $\oplus_{i\in I} \hh_\kappa$ by means of the
unitary operator 
\begin{equation}
\label{wtilde}
\widetilde{W}(\oplus_{i\in I} \varphi_i)(x)=\sum_{i\in
  I} \sqrt{\sigma_i}\varphi_i(x) y_i\,,
\end{equation}
where the series converges in norm.
\end{enumerate}
\end{example}
\begin{proof}
First two items are a consequence of Proposition~$\ref{product}$ and
Example~$\ref{operatore}$. We prove item~(iii) in two steps. 
Apply first Proposition~$\ref{actionw}$ with $w:\kk\to\ell_2$,
$(wy)_i=\scal{y}{y_i}$, so that $\hh_{K_w}$ is embedded in 
$\hh_K$, by means of the feature operator $W_w( f) = w\circ f$
for all $f\in \hh_K$. The corresponding $\ell_2$-kernel is
$K_w(x,t)=\kappa(x,t)w Bw^\ast$.
By definition of $w$, the kernel $K_w$ is diagonal with respect to  $(e_i)_{i\in
  I}$, the canonical basis of $\ell_2$, namely
\[K_w(x,t)= \sum_{i\in I}\sigma_i\kappa(x,t) e_i\otimes
\overline{e_i} =:\sum_{i\in I} K^i(x,t), \]
where the series converges in the strong operator topology. \\
Now observe that, for each $i\in I$, ${\rm ker} (\sigma_ i e_i\otimes
\overline{e_i})^\perp= \complex e_i$, so that for item~(i) of this
example, the space $\hh_{K^i}$ is
unitarily  equivalent to $\hh_\kappa\otimes \complex \, e_i\simeq
\hh_\kappa$, through the feature operator 
\[
W^i:\hh_\kappa \rightarrow  \hh_{K^i}\,, \qquad  W^i(\varphi) (x) =
\varphi(x) \sqrt{\sigma_i }e_i 
\]
Applying Proposition~$\ref{suma}$ to the family $(K^i)_{i\in I}$,  we obtain 
 a unitary operator
\[
W:\, \bigoplus_{i\in I } \hh_\kappa \longrightarrow \hh_{K_w}\,, \quad
W(\oplus_i \varphi_i)(x) = \sum_i \varphi_i(x) \sqrt{\sigma_i} e_i, 
\]
(the operator $W$ is unitary since $\sigma_i>0$ for all $i\in I$, so
that $W$ is injective). 
Equation~(\ref{wtilde}) is finally obtained letting  $\widetilde{W}=W^\ast_w W$. 
\end{proof}
 If in Example~\ref{esempioB}, $\kk$ is  a RKHS of scalar
functions over some set $X^\prime$, then there is a particular choice for the
operator $B$, suggested by Example~\ref{ex_mercer}.
\begin{example}\label{square} 
Let $X$ and $X^\prime$ be two locally compact second countable
topological spaces. Let $\kappa:X\times X\to\cuno$ and $\kappo:X^\prime\times
X^\prime\to\cuno$ be two scalar reproducing kernels on $X$ and $X^\prime$,
respectively. 
\begin{enumerate}
\item[{\rm (i)}] If $I^\prime$ denotes the identity operator on
  $\hh_{\kappo}$, define
\[ K:X\times X\to {\mathcal L}(\hh_{\kappa})\qquad K(x,t)=\kappa(x,t) I^\prime,\]
then $K$ is a $\hh_{\kappo}$-reproducing kernel on
$X$ and the corresponding RKHS $\hh_K$ is
unitarily equivalent to $\hh_{\kappa}\otimes\hh_{\kappo}$ by means of
the feature operator
\[ W: \,\hh_{\kappa}\otimes\hh_{\kappo}\longrightarrow \hh_K\,, \qquad
W(\varphi_1\otimes \varphi_2)(x)=\varphi_1(x) \varphi_2\,.\] 
\item[{\rm (ii)}] Define $\kappa\times \kappo:(X\times X^\prime)\times
  (X\times X^\prime)\to\complex$ as 
\[\left(\kappa\times\kappo\right)(x,x^\prime;t,t^\prime)=\kappa(x,t)\kappo(x^\prime,t^\prime),\]  
then $\kappa\times\kappo$ is a scalar kernel on $X\times X^\prime$ and $\hh_{\kappa\times\kappo}$
is unitarily equivalent to $\hh_K$ by means of the feature operator
\[
\widetilde{W}(f)(x,x^\prime)=\left[f(x)\right](x^\prime)=\scal{f(x)}{\kappo_{x^\prime}}_{\kappo}\qquad
f\in \hh_K.\]
\end{enumerate}   
\end{example}
\begin{proof}
The first part follows from  Example~$\ref{esempioB}$ with
$\kk=\hh_{\kappo}$ and $B=I^\prime$, which is injective.
The second part  is a consequence of 
Proposition~\ref{featuremap} applied to
\begin{eqnarray*}
\gamma:\, X\times X^\prime \longrightarrow {\mathcal L}(\complex; \hh_K) \simeq \hh_K\,,\quad 
(x,x^\prime) \longmapsto W(\kappa_{x}\otimes\kappo_{x^\prime}) \,,
\end{eqnarray*}
taking into account the injectivity of $W$ and the equalities
\begin{eqnarray*}
\scal{W(\varphi_1\otimes\varphi_2)}{\gamma_{(x,x^\prime)}}_K&=&\scal{\varphi_1\otimes\varphi_2}{\kappa_{x}\otimes\kappo_{x^\prime}}=\varphi_1(x)\scal{\varphi_2}{\kappo_{x^\prime}}_{\kappo}\\
&=&\scal{W(\varphi_1\otimes\varphi_2)(x)}{\kappo_{x^\prime}}_{\kappo}=\widetilde{W}(W(\varphi_1\otimes\varphi_2))(x,x^\prime).
\end{eqnarray*}
\end{proof}
By using Proposition~\ref{mercer} on the space $X^\prime$, the above example
can be realized in an alternative way.
\begin{example}\label{square1}
Let $X$ and $X^\prime$ be two locally compact second countable
topological spaces. Let $\kappa:X\times X\to\cuno$ and $\kappo:X^\prime\times
X^\prime\to\cuno$ be two scalar $\Co$-reproducing kernels on $X$ and $X^\prime$,
respectively.  Let $\muu$ be a probability measure on
$X^\prime$ with $\supp{\muu}=X^\prime$ and $L_{\muu}$ be the
integral operator on $L^2(X^\prime,\muu)$ with kernel $\kappo$. Define
\[ \widehat{K}: X\times X\to{\mathcal L}(L^2(X^\prime,\muu))\qquad
\widehat{K}(x,t)=\kappa(x,t) L_{\muu} ,\]
then the kernel $\widehat{K}$ is a $L^2(X^\prime,\muu)$-reproducing
kernel and the space $\hh_{\widehat{K}}$ is
unitarily equivalent to $\hh_{\kappa}\otimes\hh_{\kappo}$ by means of
\[ \widehat{W}(f\otimes g)(x)=f(x) i_{\muu}(g)\qquad
f\in\hh_{\kappa},\ g\in\hh_{\kappo},\]
where $i_{\muu}$ is the inclusion of $\hh_{\kappa^\prime}$ in $L^2(X^\prime,\mu)$.
In particular, $\widehat{K}$ is a $\Co$-kernel.
\end{example}
\begin{proof}
Apply Proposition~$\ref{actionw}$ with $K=\kappa
I^\prime$, as in the previous example, and $w=i_{\muu}$, which is
injective. Clearly $K_w=\widehat{K}$, so that  $\hh_{\widehat{K}}$ is unitarily
equivalent to $\hh_{\kappa I^\prime}$. 
The thesis follows immediately from Example~\ref{square}.
\end{proof}
The above example shows that $\hh_K$ and $\hh_{\widehat{K}}$ are the
same RKHS, where the elements of $\hh_K$ are regarded as functions
from $X$ into 
$\hh_{\kappo}$, whereas the elements of $\hh_{\widehat{K}}$ are
regarded as functions from $X$ into $L^2(X^\prime,\muu)$. 
\subsection{Application to learning theory}
We end this section considering an application of some of
the above examples to vector valued regression problems.
In learning theory, a popular algorithm is the minimization on a RKHS
$\hh_K$ of the empirical error   with a penalty term proportional to
the square of the norm \cite{evpopo00}, namely
\begin{equation}
  \label{RLS}
f^\star=\argmin_{f\in\hh_K} \left(\frac{1}{n}\sum_{\ell=1}^n \no{y^\ell
  -f(x^\ell)}_\kk^2 + \la \no{f}^2_K\right)\,.
\end{equation}
Here $\left\{(x^1,y^1),\dots,(x^n,y^n)\right\}$ is the training set of $n$
input-output pairs $(x^\ell,y^\ell)\in X\times Y$ and $\la>0$ is
the regularization parameter. If the reproducing kernel $K$ is as in
Example~\ref{esempioB}, then it can be checked that
\[ f^\star(x)=\sum_{i\in I}\varphi^\star_i(x) y_i \]
where each $\varphi^\star_i$ is given by
\[
\begin{split}
\varphi^\star_i & =\argmin_{\varphi\in\hh_\kappa}\left(\frac{1}{n}\sum_{\ell=1}^n |y_i^\ell 
  -\varphi(x^\ell)|^2 + \frac{\la}{\sigma_i} \no{\varphi}_\kappa^2\right),  
\end{split}
\] 
and $y_i^\ell=\scal{y^\ell}{y_i}$. \\
In many applications
$\kk=\complex^m$ so that $B$ is a $m\times m$ positive semi-definite
matrix. The above observation reduces the problem of computing the
minimizer of\eqn{RLS} to $|I|$~scalar problems, where the cardinality $|I|$ is
the rank of the matrix $B$.

With the choice of $K$ as in Proposition~\ref{ex_psi}, let $f^\star$ be
the minimizer given by\eqn{RLS}, where the $n$-examples in the
training set are the pairs $(t^\ell,y^\ell)\in T\times \runo^m$. 
By using the fact that $W$ is a partial surjective isometry, one can check that 
\[f^\star(t)=(\varphi^\star(\Psi_i(t)),\ldots, \varphi^\star(\Psi_m(t)),\] 
where $\varphi^\star$ is given by
\[\varphi^\star=\argmin_{\varphi\in\hh_\kappa}\left( \frac{1}{n}\sum_{\ell=1}^n\sum_{i=1}^m |y_i^\ell
  -\varphi(x_i^\ell)|^2 + \la \no{\varphi}_\kappa^2\right)\,,\] 
where $y_i^\ell\in\runo$ are the components of the output $y^\ell\in\runo^m$
and $x_i^\ell=\Psi_i(t^\ell)\in X$. With this choice the
problem\eqn{RLS} is reduced to a minimization problem on the scalar
RKHS $\hh_\kappa$.

\section{Universal kernels: main results}\label{secuniversal}
In this section we address the problem of defining and characterizing
the universality of a kernel 
$K$. As pointed out in the introduction, in learning theory a
necessary condition in order to have universally consistent algorithms is the
assumption that the reproducing kernel Hilbert space $\hh_K$ is dense in $\LduemuXK$ for any 
probability measure $\mu$. From this point of view next definition is very natural.
\begin{definition}\label{Def. di universalita'} 
Let $K:X\times X\to\BK$ be a reproducing kernel.
\begin{enumerate}
\item[{\rm (i)}] A $\Co$-kernel $K$ is called {\bf universal} if $\hh_K$ is dense in
$\LduemuXK$ for each probability measure $\mu$.   
\item[{\rm (ii)}] A Mercer kernel $K$ is called {\bf compact-universal} if
  $\hh_K$ is dense in $\LduemuXK$ for each probability measure $\mu$
  with compact support.  
\end{enumerate}
\end{definition}
We briefly comment on the above definitions. In item~(i)
the assumption that the kernel is $\Co$ ensures both that $\hh_K$ is a subspace of
$\LduemuXK$ and that   universality is equivalent
to the density of $\hh_K$ is $\CoXK$ (see Theorem~\ref{main1}). In item~(ii), since $\mu$ has
compact support, it is enough to assume that $K$ is a Mercer kernel in order to
have  $\hh_K\subset\LduemuXK$. This last property
turns out to be equivalent to the definition of universality given in
\cite{camipoyi08}.\\
Clearly a universal kernel is also compact-universal. 
Conversely, a  $\Co$-kernel can be compact-universal but not
universal, as shown by~Examples~\ref{controesempio} and~\ref{sync}. 

Notice that in Definition~$\ref{Def. di universalita'}$ if  we replace $\LduemuXK$
with $\LpmuXK$ for an arbitrary $1\leq p<\infty$, we have in principle
a different notion of universality. Nevertheless Theorem~$\ref{main1}$ clarifies that
there is no difference. We state the results for $p=2$, since it is
the natural choice in learning theory.

The following corollary shows that universality is preserved by
restriction to a  subset.
\begin{corollary}\label{HZu}
Let $X_0$ be  a subset of $X$.
\begin{enumerate}
\item[{\rm (i)}] If $X_0$ is closed and $K$ is universal, then $K_{X_0}$ is universal.
\item[{\rm (ii)}]  If $X_0$ is locally closed and $K$ is
  compact-universal, then $K_{X_0}$ is compact-universal.
\end{enumerate}
\end{corollary}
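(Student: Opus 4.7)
The plan is to reduce each statement to the universality (respectively compact-universality) of $K$ on $X$ by extending measures on $X_0$ to measures on $X$, and then invoke Corollary~\ref{HZ} to identify the restriction map on RKHS's. The kernel-type hypothesis on $X_0$ (closed in (i), locally closed in (ii)) is exactly what Corollary~\ref{HZ} requires to guarantee that $K_{X_0}$ is a $\Co$-kernel, respectively a Mercer kernel, so that the target notion of universality is well defined.

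For part~(ii), let $\mu_0$ be a probability measure on $X_0$ with compact support $Z\subseteq X_0$. Since $X$ is Hausdorff and $Z$ is compact, $Z$ is also closed and compact in $X$. I would define a Borel probability measure $\mu$ on $X$ by
\[
\mu(A) = \mu_0(A\cap X_0)\qquad A\in\borx,
\]
which is well defined because $X_0$ carries the induced Borel structure. Then $\mu$ is concentrated on $X_0$ and $\supp \mu = Z$ is compact in $X$, so $\mu$ is admissible for compact-universality of $K$. The key observation is that the restriction map $R\colon f\mapsto f_{|X_0}$ is an isometric isomorphism from $L^2(X,\mu;\kk)$ onto $L^2(X_0,\mu_0;\kk)$: it is isometric because $\mu$ is supported on $X_0$, and it is surjective because any function on $X_0$ can be extended by $0$ to a measurable function on $X$. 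Corollary~\ref{HZ} identifies $\hh_{K_{X_0}}$ with $\{f_{|X_0} : f\in\hh_K\}$, so $R(\hh_K)=\hh_{K_{X_0}}$. By compact-universality of $K$, $\hh_K$ is dense in $L^2(X,\mu;\kk)$, and applying the continuous (indeed isometric) map $R$ we obtain that $\hh_{K_{X_0}}$ is dense in $L^2(X_0,\mu_0;\kk)$, as required.

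Part~(i) is the same argument, but starting from an arbitrary probability measure $\mu_0$ on $X_0$ (no compact-support hypothesis). The only additional point is that Corollary~\ref{HZ} needs $X_0$ closed to ensure $K_{X_0}$ is $\Co$, which is precisely the hypothesis in~(i); and the extension $\mu(A)=\mu_0(A\cap X_0)$ is again a probability measure on $X$ (with support contained in $X_0$), so universality of $K$ applies directly. The restriction-isometry argument is then carried over verbatim.

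The only nontrivial point is checking that the pushforward construction really gives a Borel measure on $X$ for which the restriction map is an isometric isomorphism; but since $X_0$ is Borel in $X$ (being closed, respectively locally closed, hence the intersection of a closed and an open subset) and carries the subspace $\sigma$-algebra, this is routine. No genuine obstacle arises; the content of the corollary is essentially that Corollary~\ref{HZ} plus the extension-by-zero of measures transports (compact-)universality from $X$ to $X_0$.
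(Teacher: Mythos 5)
Your proof is correct and follows essentially the same route as the paper's: extend a probability measure $\mu_0$ on $X_0$ to $X$ via $\mu(A)=\mu_0(A\cap X_0)$, identify $L^2(X,\mu;\kk)$ with $L^2(X_0,\mu_0;\kk)$ through restriction, and invoke Corollary~\ref{HZ} to see that $\hh_K$ restricts onto $\hh_{K_{X_0}}$. The paper only writes out part~(i) and leaves (ii) to the reader, whereas you spell out the compact-support case as well, but there is no substantive difference in method.
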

\begin{proof}
We only prove~(i). Since $X_0$ is closed, Corollary~\ref{HZ} implies that
$K_{X_0}$ is a $\Co$-kernel, and a function $f$ belongs to $\hh_{K_{X_0}}$ if and only if
there exists $g\in\hh_K$ such that $f=g_{\mid_{X_0}}$. Given a
  probability measure $\mu$ on $X_0$,   let $\nu$  be the probability
  measure on $X$, $\nu(E)=\mu(E\cap X_0)$ for any Borel subset $E$ of
  $X$. By universality of $K$, $\hh_K$ is dense in
  $L^2(X,\nu,\kk)\simeq L^2(X_0,\mu,\kk)$, where the equivalence is
  given by the restriction from $X$ to $X_0$, so that
  $\hh_{K_{X_0}}$ is dense in  $ L^2(X_0,\mu,\kk)$.
\end{proof}
The converse is clearly not true. Notice that the compact-universal
kernels are precisely the Mercer kernels such that $K_{X_0}$ is
universal for any compact subset $X_0$ of $X$. 

\medskip

In the next subsections we discuss separately the two notions of universality and then we 
make a comparison between them. 

\subsection{Universality and $\Co$-kernels}  
In this section we characterize the universal $\Co$-kernels.
First result shows that the density of $\hh_K$ in
$\LduemuXK$ for any probability measure $\mu$ is equivalent to the
density in $\CoXK$ and that one can replace $\LduemuXK$ with $\LpmuXK$, $1\leq p <\infty$.
\begin{theorem}\label{main1}
  Suppose $K$ is a $\Co$-kernel. The following facts are equivalent.
 \begin{itemize}
 \item[{\rm (a)}] The kernel $K$ is universal.
 \item[{\rm (b)}] The space $\hh_K$ is dense in  $\CoXK$.
 \item[{\rm (c)}] There is  $1\leq p <\infty$ such that $\hh_K$ is dense in
   $\LpmuXK$ for all probability measures $\mu$ on $X$.
  \end{itemize}
 \end{theorem}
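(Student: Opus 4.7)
The plan is to prove the cyclic chain (b) $\Rightarrow$ (a) $\Rightarrow$ (c) $\Rightarrow$ (b). The first two implications are essentially immediate from material already recalled in Section~\ref{sez. not.}, while (c) $\Rightarrow$ (b) is the substantive content and uses the representation of bounded linear functionals on $\CoXK$ deferred to the Appendix.

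For (b) $\Rightarrow$ (a), I fix a probability measure $\mu$ and observe that since $K$ is a $\Co$-kernel one has $\hh_K\subset\CoXK$, and the embedding $\CoXK\hookrightarrow\LduemuXK$ is continuous because $\no{f}_2\leq\no{f}_\infty$ when $\mu(X)=1$. Since $\CoXK$ is dense in $\LduemuXK$ (as recalled in Section~\ref{sez. not.}), composing the two density statements yields density of $\hh_K$ in $\LduemuXK$. The implication (a) $\Rightarrow$ (c) is trivial by taking $p=2$.

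For the main implication (c) $\Rightarrow$ (b) I argue by contraposition. Suppose $\hh_K$ is not dense in $\CoXK$; by Hahn--Banach there exists a nonzero continuous linear functional $T$ on $\CoXK$ annihilating $\hh_K$, and the representation result from the Appendix lets us write
\[T(f)=\int_X\scal{f(x)}{h(x)}\de\mu(x)\]
for some probability measure $\mu$ on $X$ and some bounded Borel measurable $h:X\to\kk$ which is not $\mu$-a.e.\ zero. Let $q=p/(p-1)$ be the conjugate exponent of the $p$ appearing in~(c). Since $h$ is bounded and $\mu$ is finite, $h\in L^q(X,\mu;\kk)$, so the same integral formula defines a continuous linear functional $\widetilde T$ on $\LpmuXK$ by H\"older's inequality. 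This $\widetilde T$ coincides with $T$ on $\CoXK$, hence it annihilates $\hh_K$; by hypothesis (c), $\hh_K$ is dense in $\LpmuXK$, so $\widetilde T$ vanishes identically, forcing $h=0$ $\mu$-a.e. This contradicts $T\neq 0$.

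The only delicate ingredient is the Riesz-type representation of the dual of $\CoXK$ with values in a (possibly infinite-dimensional) Hilbert space $\kk$, which is nontrivial and for this reason is established separately in the Appendix; the rest is a standard duality argument. Note that $p$ enters only through H\"older's inequality, so the same chain of implications works uniformly for every $1\leq p<\infty$, which also explains the parenthetical remark in the introduction that density in $\LpmuXK$ for some such $p$ is equivalent to density in $\LqmuXK$ for every such $q$.
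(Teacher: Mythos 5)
Your proposal is correct and follows essentially the same route as the paper: the easy implications (b) $\Rightarrow$ (a) and (a) $\Rightarrow$ (c) via the continuous dense inclusion $\CoXK\hookrightarrow\LduemuXK$, and the contrapositive of (c) $\Rightarrow$ (b) via the Hahn--Banach theorem together with the representation $T(f)=\int_X\scal{f(x)}{h(x)}\de\mu(x)$ from Theorem~\ref{semplice}, noting that the bounded density $h$ lies in $L^{p/(p-1)}(X,\mu;\kk)=\LpmuXK^\ast$ because $\mu$ is finite. The paper's proof is the same duality argument, stated without the explicit contradiction framing.
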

\begin{proof} 
Clearly (a) implies (c). Since $X$ is locally compact and second
countable,  $\CoXK$ is dense in $\LduemuXK$ where  the inclusion is
continuous,  so that (b) implies (a).  \\
We show that (c) implies (b).
Suppose hence that $\hh_K$ is not dense in $\CoXK$. Then, there
exists $T\in \CoXK^\ast$, $T\neq 0$ such
that $T(f)=0$ for all $f\in \hh_K$. By Theorem \ref{semplice}, there
is a probability measure $\mu$ on $X$ and a function $h\in\LinfmuXK$ such
that $T(f)=\int_{X} \scal{f(x)}{h(x)}\de \mu(x)$. Since $T\neq 0$,
then $h\neq
0$. \\
Since $\mu$ is a probability measure, $h$ is a non-null element in
$L^{p/(p-1)} (X,\mu;\kk)=\LpmuXK^\ast$ (where we set $1/0 = \infty$)
such that
\[ \int_{X} \scal{f(x)}{h(x)}\de \mu(x)=0\qquad \forall f\in\hh_K.\]
It follows that $\hh_K$ is not dense in $\LpmuXK$.
\end{proof}
As a consequence of the previous theorem, we have the following nice corollary.
\begin{corollary}
\label{oneforall}
Let $K$ be a $\Co$- kernel. Given $1\leq p \leq q <\infty$, the space
$\hh_K$ is dense in $\LpmuXK$ for all probability
measures $\mu$ if and only if it is dense in $\LqmuXK$ for all
probability measures $\mu$. 
\end{corollary}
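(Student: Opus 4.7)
The plan is to derive this corollary as an immediate consequence of Theorem~\ref{main1}, by observing that the density of $\hh_K$ in $\LpmuXK$ for all probability measures $\mu$, at any fixed exponent $p \in [1,\infty)$, is equivalent to the single condition~(b), namely density of $\hh_K$ in $\CoXK$. Since condition~(b) does not depend on $p$, two different exponents must yield the same property.

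More concretely, first I would assume that $\hh_K$ is dense in $\LpmuXK$ for every probability measure $\mu$. This is precisely condition~(c) of Theorem~\ref{main1}, so by the implication (c)$\Rightarrow$(b) proved there, $\hh_K$ is dense in $\CoXK$ with respect to the uniform norm. For the converse direction, I would fix $q \in [p,\infty)$ (or any $q \in [1,\infty)$ for that matter) and take an arbitrary probability measure $\mu$ on $X$. Recall from Section~\ref{sez. not.} that, since $X$ is locally compact and second countable, $\CoXK$ is continuously included and dense in $\LqmuXK$. Composing the density of $\hh_K$ in $\CoXK$ with this continuous dense inclusion yields density of $\hh_K$ in $\LqmuXK$. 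As $\mu$ was arbitrary, this concludes the proof of one direction.

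The other direction is formally symmetric: assuming density in $\LqmuXK$ for all $\mu$, invoke the (c)$\Rightarrow$(b) implication of Theorem~\ref{main1} with $p$ replaced by $q$ to obtain density in $\CoXK$, and then conclude density in $\LpmuXK$ via the same continuous dense inclusion $\CoXK \hookrightarrow \LpmuXK$. There is no genuine obstacle here since the two implications of Theorem~\ref{main1} used (namely (c)$\Rightarrow$(b) and the chain (b)$\Rightarrow$(a)) are exponent-free in their conclusion about $\CoXK$; the role of $p$ and $q$ is only to index one instance of a statement whose truth is controlled entirely by density in $\CoXK$. Thus the corollary reduces to a one-line bookkeeping argument on top of the theorem, with no new estimates required.
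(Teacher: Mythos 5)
Your proof is correct and is essentially the argument the paper intends: the corollary is stated there as an immediate consequence of Theorem~\ref{main1}, precisely because condition (c) for any fixed exponent is equivalent to the exponent-free condition (b), and density in $\CoXK$ transfers to every $\LqmuXK$ through the continuous dense inclusion $\CoXK\hookrightarrow\LqmuXK$. No gaps; your bookkeeping matches the paper's reasoning.
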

The previous result is not trivial. Clearly, if $q\geq p$, the
space $\LqmuXK$ is always  a dense subspace of $\LpmuXK$ and the
inclusion is continuous. Hence, if a RKHS $\hh_K$  is dense in $\LqmuXK$,
then $\hh_K$ is always dense in $\LpmuXK$. However, in general
$\LpmuXK$ is not contained in $\LqmuXK$, so that,  if $\hh_K$ is dense
$\LpmuXK$, the density of $\hh_K$ in  $\LqmuXK$ has to be
proved. Corollary~\ref{oneforall}  
shows this result under the assumption that $K$ is $\Co$. 

Now, we give a characterisation of universality of $K$ in terms of the
injectivity property of the integral operators $L_\mu$, for $\mu$
varying over the probability measures on $X$. 
\begin{theorem}\label{univimplLKiniett}
Suppose $K$ is a $\Co$-kernel. Then the following facts are equivalent.
\begin{enumerate}
\item[{\rm (a)}] The kernel $K$ is universal.
\item[{\rm (b)}] The operator $i_\mu^\ast:\LduemuXK\to\hh_K$ is an 
injective operator for all probability measures $\mu$ on $X$.
\item[{\rm (c)}] The integral operator $L_\mu:\LduemuXK\to\LduemuXK$
  is injective for all probability measures $\mu$ on $X$. 
\end{enumerate}
\end{theorem}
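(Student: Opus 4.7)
The plan is to reduce all three conditions to straightforward Hilbert-space identities involving the inclusion operator $i_\mu:\hh_K\hookrightarrow\LduemuXK$ and its adjoint. Since $K$ is a $\Co$-kernel, $\no{K(x,x)}$ is bounded on $X$ by Proposition~\ref{equivalenza in Co}, hence bounded on $\supp\mu$ for any probability measure $\mu$; Proposition~\ref{integral} then gives that $i_\mu$ is a bounded operator, $i_\mu^\ast$ is exactly the integral expression displayed there, and $L_\mu=i_\mu i_\mu^\ast$.

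I would establish (a)~$\Leftrightarrow$~(b) by the standard orthogonal-complement identity
\[
\overline{i_\mu(\hh_K)}=\ker(i_\mu^\ast)^\perp,
\]
valid for any bounded operator between Hilbert spaces. Density of $\hh_K$ in $\LduemuXK$ is just density of the image of $i_\mu$, which is therefore equivalent to $\ker(i_\mu^\ast)=\{0\}$. Since this must hold for every probability measure $\mu$, (a) and (b) are equivalent.

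For (b)~$\Leftrightarrow$~(c) I would exploit $L_\mu=i_\mu i_\mu^\ast$. If $i_\mu^\ast f=0$ then clearly $L_\mu f=0$, so $\ker(i_\mu^\ast)\subseteq\ker(L_\mu)$. Conversely, if $L_\mu f=0$, then
\[
0=\scal{L_\mu f}{f}_{2}=\scal{i_\mu i_\mu^\ast f}{f}_{2}=\no{i_\mu^\ast f}_K^{2},
\]
so $f\in\ker(i_\mu^\ast)$. Hence $\ker(L_\mu)=\ker(i_\mu^\ast)$, and injectivity of one is equivalent to injectivity of the other, for every $\mu$.

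The proof is essentially bookkeeping; there is no real obstacle, since the heavy lifting (that $i_\mu$ is well-defined and bounded, that $i_\mu^\ast$ and $L_\mu$ admit the integral representations, and that $L_\mu=i_\mu i_\mu^\ast$) is already contained in Proposition~\ref{integral}. One point to verify with care is that for $\Co$-kernels the hypothesis of Proposition~\ref{integral} — boundedness of $K$ on $\supp\mu$ — holds for \emph{every} probability measure $\mu$, not just those with compact support; this is immediate from the global bound $\sup_{x\in X}\no{K(x,x)}<\infty$ in Proposition~\ref{equivalenza in Co}~(ii).
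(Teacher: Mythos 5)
Your proof is correct and is essentially the paper's own argument: the paper reduces the theorem to Proposition~\ref{densita-iniettivita}, whose proof uses exactly the same two identities you give ($\overline{\ran{i_\mu}}=\ker(i_\mu^\ast)^\perp$ and $\ker{L_\mu}=\ker{i_\mu^\ast}$ via $L_\mu=i_\mu i_\mu^\ast$), and your observation that the boundedness hypothesis holds for every probability measure because $\sup_{x}\no{K(x,x)}<\infty$ for a $\Co$-kernel is precisely the point that lets that proposition apply uniformly.
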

The proof is an immediate consequence of Theorem \ref{main1} and the
next proposition. 
\begin{proposition}\label{densita-iniettivita}
Let $K$ be a Mercer kernel and $\mu$ a fixed probability measure on
$X$ such that $K$ is bounded on the support of $\mu$. The following
facts are equivalent. 
\begin{enumerate}
\item[{\rm (a)}] The space $\hh_K$ is dense in $\LduemuXK$.
\item[{\rm (b)}] The operator  $i_\mu^\ast$ is injective.
\item[{\rm (c)}] The integral operator $L_\mu$ is injective.
\end{enumerate}
\end{proposition}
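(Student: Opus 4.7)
The plan is to use the fact that $i_\mu^\ast$ and $L_\mu$ are tightly linked through the factorization $L_\mu = i_\mu i_\mu^\ast$ stated in Proposition~\ref{integral}, together with the elementary duality between density of the range of a bounded operator and injectivity of its adjoint. All three conditions turn out to be reformulations of the same orthogonality statement in $\LduemuXK$.

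First I would prove (a) $\Leftrightarrow$ (b). By Proposition~\ref{integral} the inclusion $i_\mu : \hh_K \hookrightarrow \LduemuXK$ is a bounded operator whose range is $\hh_K$ (viewed as a subspace of $\LduemuXK$). A standard Hilbert space fact gives
\[
\overline{\ran{i_\mu}} \;=\; (\ker i_\mu^\ast)^\perp,
\]
so $\hh_K$ is dense in $\LduemuXK$ if and only if $\ker i_\mu^\ast = \{0\}$, which is exactly (b).

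Next I would prove (b) $\Leftrightarrow$ (c). Since $L_\mu = i_\mu i_\mu^\ast$ by Proposition~\ref{integral}, one inclusion is immediate: if $i_\mu^\ast f = 0$ then $L_\mu f = i_\mu(i_\mu^\ast f) = 0$, so $\ker i_\mu^\ast \subseteq \ker L_\mu$. For the reverse inclusion, if $f \in \ker L_\mu$ then
\[
0 \;=\; \scal{L_\mu f}{f}_2 \;=\; \scal{i_\mu i_\mu^\ast f}{f}_2 \;=\; \no{i_\mu^\ast f}_K^2,
\]
so $i_\mu^\ast f = 0$. Hence $\ker L_\mu = \ker i_\mu^\ast$, and (b) and (c) are equivalent.

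No step is really an obstacle here: the only ingredients are the factorization $L_\mu = i_\mu i_\mu^\ast$ (already established in Proposition~\ref{integral}, for which the hypothesis that $K$ be bounded on $\supp\mu$ is exactly what guarantees $i_\mu$ is well-defined and bounded), together with the two elementary Hilbert space identities $\overline{\ran T} = (\ker T^\ast)^\perp$ and $\ker(T T^\ast) = \ker T^\ast$. Once these are in place the three conditions chain together automatically, and Theorem~\ref{univimplLKiniett} follows by applying the proposition to every probability measure $\mu$ and combining with the equivalence (a)$\Leftrightarrow$(c) of Theorem~\ref{main1}.
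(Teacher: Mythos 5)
Your proof is correct and follows essentially the same route as the paper: both reduce (a)$\Leftrightarrow$(b) to the duality $\overline{\ran{i_\mu}}=(\ker i_\mu^\ast)^\perp$ and (b)$\Leftrightarrow$(c) to the identity $\ker(i_\mu i_\mu^\ast)=\ker i_\mu^\ast$ via the factorization $L_\mu=i_\mu i_\mu^\ast$ from Proposition~\ref{integral}. The only difference is that you spell out the computation $\scal{L_\mu f}{f}_2=\no{i_\mu^\ast f}_K^2$, which the paper leaves implicit.
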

\begin{proof}
The space $\hh_K$ is dense in $\LduemuXK$ if and only if the range of
  $i_\mu$ is dense in $\LduemuXK$. This last condition is equivalent
  to the injectivity of $i_\mu^*$, that is,  {\rm (a)} is equivalent to {\rm (b)}.
  Since $L_\mu=i_\mu i_\mu^*$ and $\ker{L_\mu}=\ker{i_\mu^*}$, then {\rm (b)} and
  {\rm (c)} are equivalent.
\end{proof}

\subsection{Compact-universality}\label{42}

In this section, we characterize compact-universality of Mercer
kernels and we show that compact-universality is precisely what
is called universality  in \cite{camipoyi08}.

Next theorem characterizes compact-universality.
 \begin{theorem}\label{main2}
Suppose $K$ is  a Mercer kernel. 
The following facts are equivalent.
\begin{itemize}
\item[{\rm (a)}] The kernel $K$ is compact-universal.
\item[{\rm (b)}] The space $\hh_K$ is dense in $\CXK$ endowed with compact-open topology.
\item[{\rm (c)}]  There is  $1\leq p <\infty$ such that $\hh_K$ is dense in $\LpmuXK$ for all
compactly supported probability measures.
\end{itemize}
\end{theorem}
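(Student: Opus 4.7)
The plan is to follow the structure of the proof of Theorem~\ref{main1}, keeping the three implications (a)$\Rightarrow$(c), (b)$\Rightarrow$(a), (c)$\Rightarrow$(b), but replacing $\CoXK$ by $\CXK$ endowed with the compact-open topology, and replacing the global $\Co$-representation of linear functionals by one concentrated on a single compact subset.

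The implication (a)$\Rightarrow$(c) is immediate with $p=2$. For (b)$\Rightarrow$(a), I would fix a probability measure $\mu$ with compact support $Z$. Since convergence in the compact-open topology on $\CXK$ dominates uniform convergence on $Z$, the restriction map $r_Z:\CXK\to\CZK$ is continuous. Given $g\in\CZK$, the Tietze extension theorem yields $\tilde g\in\CXK$ with $\tilde g|_Z=g$; density of $\hh_K$ in $\CXK$ then gives $f\in\hh_K$ arbitrarily close to $\tilde g$ in the seminorm $\no{\cdot}_Z$, so $r_Z(\hh_K)$ is uniformly dense in $\CZK$. Since $\CZK$ is dense in $\LduemuZK$ and $\mu$ is concentrated on $Z$, the canonical isometry $\LduemuZK\simeq\LduemuXK$ delivers the density of $\hh_K$ in $\LduemuXK$.

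For the key implication (c)$\Rightarrow$(b), I would argue by contradiction via Hahn-Banach. Suppose $\hh_K$ is not dense in $\CXK$; then a non-zero continuous linear functional $T$ on $\CXK$ vanishes on $\hh_K$. Continuity in the compact-open topology provides a compact $Z\subset X$ and a constant $C$ with $|T(f)|\leq C\no{f}_Z$ for all $f\in\CXK$, so $T$ factors through $r_Z$ as a non-zero bounded linear functional $\tilde T$ on $\CZK$. Since $Z$ is compact, $\CZK=\Co(Z;\kk)$, and applying Theorem~\ref{semplice} on the locally compact second countable space $Z$ furnishes a probability measure $\nu$ on $Z$ and a non-zero $h\in\LinfmuZK$ with
\[
T(f)=\int_Z\scal{f(x)}{h(x)}\,\de\nu(x)\qquad\forall f\in\CXK.
\]
Extending $\nu$ by zero yields a compactly supported probability measure on $X$; since $\nu$ is finite, $h\in L^{\infty}(X,\nu;\kk)\subset L^{p/(p-1)}(X,\nu;\kk)$ (with the convention $1/0=\infty$) defines a non-zero element of the dual of $\LpmuXK$ annihilating $\hh_K$, contradicting~(c).

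The main technical point is the representation of the dual of $\CXK$ in the compact-open topology: continuity forces any functional to be controlled by a single seminorm $\no{\cdot}_Z$, which reduces the vector-valued representation problem to the already-established Theorem~\ref{semplice} applied on the compact space $Z$. Once this reduction is in place, the rest of the argument mirrors the proof of Theorem~\ref{main1} and the measure-theoretic bookkeeping is routine.
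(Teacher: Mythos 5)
Your proposal is correct, and the implications (a)$\Rightarrow$(c) and (b)$\Rightarrow$(a) coincide with the paper's argument (you add the vector-valued Tietze/Dugundji extension step, which the paper leaves implicit when it passes from density in $\CXK$ to density of the restrictions in $\CZK$; this is a legitimate detail worth making explicit). Where you diverge is in (c)$\Rightarrow$(b). The paper localizes first and then reuses Theorem~\ref{main1} as a black box: it observes that density in $\CXK$ for the compact-open topology is equivalent to density of $\left.\hh_K\right|_Z$ in $\CZK$ for every compact $Z$, notes that $\left.\hh_K\right|_Z=\hh_{K_{Z}}$ is the RKHS of the restricted kernel (Corollary~\ref{HZ}), which is a $\Co$-kernel on the compact space $Z$, and that hypothesis (c) gives density of $\left.\hh_K\right|_Z$ in $\LpmuZK$ for every probability measure on $Z$; Theorem~\ref{main1} applied on $Z$ then finishes. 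You instead run the Hahn--Banach duality argument globally on $\CXK$ and localize afterwards, using the fact that a functional continuous for the compact-open topology is dominated by a single seminorm $\no{\cdot}_Z$ (a finite union of compacts being compact), so that it factors through $\CZK$ and Theorem~\ref{semplice} applies on $Z$. Both routes rest on the same representation theorem for the dual of $\mathcal{C}_0$ of a compact set; the paper's version is shorter because it recycles Theorem~\ref{main1}, while yours makes the structure of $\left(\CXK\right)^\ast$ explicit and avoids invoking the restriction-of-kernels machinery of Corollary~\ref{HZ}. One small point to keep in mind: to view your $\tilde T$ as a functional on all of $\CZK$ rather than just on the image of the restriction map, you need either the surjectivity of $r_Z$ (the same extension theorem you already invoked) or a Hahn--Banach extension; either fix is routine.
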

\begin{proof}
Clearly  (a) implies (c). We prove that (b) implies (a). Indeed, fixed a probability measure
$\mu$ with compact support $Z$, the fact that  $\hh_K$ is dense in
$\CXK$ implies that $\left.\hh_K\right|_Z:=\set{f_{\left.\right|_Z}\mid f\in\hh_K}$ is dense
  in $\CZK$, but $\CZK$ is clearly dense in $\LduemuZK\simeq\LduemuXK$ with
  continuous injection. Hence $\hh_K$
  is dense in $\LduemuXK$. It only remains to prove that (c) implies (b).
 For this,  it is enough to prove that $\left.\hh_K\right|_Z$ is dense in $\CZK$
  with the uniform norm, for all compact subset $Z$ of $X$. But this
  is a simple consequence of Theorem \ref{main1} since
  $\left.\hh_K\right|_Z$ is clearly dense in $\LpmuZK$ for all
  probability measure $\mu$ on $Z$, and $\CZK=\mathcal{C}_0
  (Z;{\kk})$.
\end{proof}

The analog of theorem \ref{univimplLKiniett} also holds.
\begin{theorem}\label{univimplLKiniettme}
Suppose $K$ is a Mercer kernel. Then the following facts are equivalent.
\begin{enumerate}
\item[{\rm (a)}] The kernel $K$ is compact-universal.
\item[{\rm (b)}] The operator $i_\mu^\ast:\hh_K\to\LduemuXK$ is an 
injective operator for all compactly supported probability measures $\mu$ on $X$.
\item[{\rm (c)}] The integral operator $L_\mu:\LduemuXK\to\LduemuXK$ is injective for all
  probability measures $\mu$ on $X$ with compact support. 
\end{enumerate}
\end{theorem}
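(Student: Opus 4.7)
The plan is to mirror the proof of Theorem \ref{univimplLKiniett} by combining Theorem \ref{main2} with Proposition \ref{densita-iniettivita} applied pointwise in $\mu$. The key observation is that Proposition \ref{densita-iniettivita} requires $K$ to be bounded on $\supp{\mu}$; in the present setting $\mu$ is compactly supported and $K$ is Mercer, so by item~(i) of Proposition \ref{equivalenza in Co} the function $x\longmapsto\no{K(x,x)}$ is locally bounded, hence bounded on the compact set $\supp{\mu}$. Thus Proposition \ref{densita-iniettivita} is available for every compactly supported probability measure $\mu$.

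First I would unwind the definitions: by Theorem \ref{main2}, condition (a) is equivalent to the statement that $\hh_K$ is dense in $\LduemuXK$ for every compactly supported probability measure $\mu$ on $X$. For a single such $\mu$, Proposition \ref{densita-iniettivita} then gives that this density is equivalent both to injectivity of $i_\mu^\ast$ and to injectivity of $L_\mu=i_\mu i_\mu^\ast$. Quantifying over all compactly supported $\mu$ produces the equivalence of (a), (b), and (c).

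I would point out that the statement of (b) in the theorem should be read as ``$i_\mu^\ast:\LduemuXK\to\hh_K$ is injective'' (i.e.\ the direction of $i_\mu^\ast$ is as in Proposition \ref{integral}), matching the analogous part of Theorem \ref{univimplLKiniett}. No new ideas beyond Proposition \ref{densita-iniettivita} and Theorem \ref{main2} are needed, and there is no genuine obstacle: the only point requiring attention is the boundedness-on-support hypothesis, which, as noted above, is automatic here. The argument is essentially a one-line invocation of the two earlier results, carried out uniformly in $\mu$.
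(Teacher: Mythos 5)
Your proof is correct and follows the paper's own route: the paper likewise reduces the theorem to Proposition~\ref{densita-iniettivita} applied to each compactly supported probability measure (the appeal to Theorem~\ref{main2} is redundant, since compact-universality is by definition the density of $\hh_K$ in $\LduemuXK$ for all such $\mu$). Your observations about the boundedness-on-support hypothesis and the misprinted direction of $i_\mu^\ast$ in item (b) are both apt.
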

The proof is a simple consequence of Proposition~\ref{densita-iniettivita}.

Clearly universality of a $\Co$-kernel $K$ implies
compact-universality. The converse is not true as shown by the
following example, see also Example~\ref{sync}.  The reason of this
phenomenon is the fact that 
$\CoXK$ endowed with the compact-open topology is not continuously embedded in $\LpmuXK$.
\begin{example}\label{controesempio}\rm
  Let $X = \Z_+$, and let $\ell^2$ be the Hilbert space of square
  summable sequences. Then, $\ell^2$ is a RKHS of scalar functions on
  $X$ with reproducing kernel $K(i,j) = \delta_{i,j}$, where
  $\delta_{i,j}$ is the Kronecker delta. We fix the following sequence
  $\{ f_k \}_{k\in\Z_+}$ in $\ell^2$
$$
f_k (j) = \delta_{j,k} + e \delta_{j,k+1} ,
$$
and we let
\begin{equation}\label{chiusura in l2}
\hh_{\tilde{K}} = \ell^2 {\rm -cl}\, \Spanno{ f_k \mid k\in\Z_+ }
\end{equation}
($\ell^2 {\rm -cl}$ denotes the closure in
$\ell^2$). $\hh_{\tilde{K}}$ is also a RKHS of scalar functions on
$X$, whose reproducing kernel we denote by $\tilde{K}$. Since $\ell^2
\subset c_0$ ($\, =$ the sequences going to $0$ at infinity),
$\tilde{K}$ is a $\Co$-reproducing kernel.

For all $n\in\Z_+$, let $Z_n = \{ 1,2 \ldots n \}$. $Z_n$ is compact
in $X$, and every compact set $Z\subset X$ is contained in some
$Z_n$. Clearly,
$$
\cc{ Z_n } = \Spanno{(f_k)_{|_{Z_n}} \mid k\leq n}  ,
$$
hence $\hh_{\tilde{K}}$ is dense in $\cccc (X)$ with the topology of
uniform convergence on compact subsets.

Let $\mu$ be the probability measure on $X$ such that $\mu ( \{ j \} )
= (e-1)e^{-j} $. We claim that $\hh_{\tilde{K}}$ is not dense in $L^2
(X,\mu)$. In fact, let $f\in L^2 (X,\mu)$ be the function $f(j) =
(-1)^j$. We have $\scal{f_k}{f}_{L^2 (X,\mu)} = 0$ for all $k$. By
(\ref{chiusura in l2}) and continuity of the inclusion $\ell^2
\hookrightarrow L^2 (X,\mu)$, we see that $f$ is in the orthogonal
complement of $\hh_{\tilde{K}}$ in $L^2 (X,\mu)$. The claim then
follows.
\end{example}
A universal kernel is strictly positive definite, but the
converse in general fails, as shown by the following corollary and example.
\begin{corollary}\label{univimplKstrdefpos}
Suppose $K$ is a compact-universal kernel. Then $K$ is strictly positive
definite, {\em i.e.} for all finite subsets $\{ x_1 , x_2 \ldots x_N \}$ of
$X$ such that $x_i\neq x_j$ if $i\neq j$, the condition 
$$\sum_{i,j = 1}^N \scal{K(x_i , x_j) y_j}{y_i} =
0\quad\quad(y_i\in\kk,\,i=1 \ldots N)$$ 
implies $y_i=0$ for all $i=1,\ldots,N$.
\end{corollary}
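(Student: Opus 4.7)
The plan is to derive strict positive definiteness from the injectivity of the integral operator $L_\mu$ for finitely supported probability measures, using Theorem~\ref{univimplLKiniettme}.

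Given distinct points $x_1,\ldots,x_N\in X$ and vectors $y_1,\ldots,y_N\in\kk$ satisfying $\sum_{i,j=1}^N \scal{K(x_i,x_j)y_j}{y_i}=0$, I would introduce the probability measure
\[
\mu=\frac{1}{N}\sum_{i=1}^N \delta_{x_i},
\]
which is compactly supported (its support is the finite set $\{x_1,\ldots,x_N\}$). Then $\LduemuXK$ can be identified with $\kk^N$ via $f\mapsto (f(x_1),\ldots,f(x_N))$, and I would take $f\in\LduemuXK$ to be the function $f(x_i)=y_i$.

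The key computation is that under this identification,
\[
(L_\mu f)(x_i)=\int_X K(x_i,t)f(t)\,\de\mu(t)=\frac{1}{N}\sum_{j=1}^N K(x_i,x_j)y_j,
\]
so that
\[
\scal{L_\mu f}{f}_2=\frac{1}{N}\sum_{i=1}^N \scal{(L_\mu f)(x_i)}{y_i}=\frac{1}{N^2}\sum_{i,j=1}^N \scal{K(x_i,x_j)y_j}{y_i}=0.
\]
Since $L_\mu=i_\mu i_\mu^\ast$ by Proposition~\ref{integral}, we have $\scal{L_\mu f}{f}_2=\no{i_\mu^\ast f}_K^2$, so $i_\mu^\ast f=0$, hence $L_\mu f=0$.

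By compact-universality of $K$ and Theorem~\ref{univimplLKiniettme}, the operator $L_\mu$ is injective on $\LduemuXK$, so $f=0$ in $\LduemuXK$. Since $\mu$ assigns strictly positive mass to each $x_i$, this forces $y_i=f(x_i)=0$ for every $i=1,\ldots,N$. No step here seems to present a real obstacle; the only care needed is the identification of $\LduemuXK$ with $\kk^N$ and the factorization $L_\mu=i_\mu i_\mu^\ast$, both of which are furnished by the preceding results.
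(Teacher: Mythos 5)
Your proposal is correct and follows essentially the same route as the paper: the same empirical measure $\mu=\frac{1}{N}\sum_i\delta_{x_i}$, the same function $f(x_i)=y_i$, the same identity $\scal{L_\mu f}{f}_2=\frac{1}{N^2}\sum_{i,j}\scal{K(x_i,x_j)y_j}{y_i}$, and the same appeal to Theorem~\ref{univimplLKiniettme}. The only cosmetic difference is that you pass through $\no{i_\mu^\ast f}_K^2=0$ where the paper invokes positivity of $L_\mu$ directly; these are the same fact.
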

\begin{proof}
  Assume $\sum_{i,j = 1}^N \scal{K(x_i , x_j) y_j}{y_i} = 0$ for some
  finite subset $\{ x_1 , x_2 \ldots x_N \} \in X$, $x_i\neq x_j$ if
  $i\neq j$, and  $\{y_1,y_2\ldots x_N\}$ in $\kk$.
  Taking $$\mu=\frac{1}{N}\sum_{i=1}^N\delta_{x_i}\quad\quad\mbox{and}\quad\quad
  \varphi=\sum_{i=1}^Ny_i\delta_{x_i},$$ 
  we obtain a probability measure $\mu$ on $X$ with compact support
  and a function $\varphi\in\LduemuXK$ such that
\begin{eqnarray*}
  0&\,=\,&\sum_{i,j = 1}^N \scal{K(x_i , x_j) y_j}{y_i}=N^2\int_{X\times
    X}\scal{K(x,y)\varphi(y)}{\varphi(x)}\,d\mu(y)\,d\mu(x)\\ 
  &\,=\,&N^2\int_{X}\scal{(L_\mu\varphi)(x)}{\varphi(x)}\,d\mu(x)=
N^2\scal{L_\mu\varphi}{\varphi}_{2}.   
\end{eqnarray*}
Since $L_\mu$ is positive and injective by
Theorem~\ref{univimplLKiniettme}, we have $\varphi(x_i)=0$ for all $i=1,\ldots
,N$. Since $x_i\neq x_j$ if $i\neq j$, then $y_i=0$ for all $i=1,\ldots,N$. 
\end{proof}
The converse of the above corollary fails to be true, as shown by the
following example.
\begin{example}
 Let $K:\runo\times\runo\to\complex$ be the kernel 
\begin{eqnarray*}
K\lft x , t \rgt & = & \int_{-1}^1 e^{2\pi i\lft x -t\rgt p} \de
p= \frac{\sin{2\pi(x-t)}}{\pi(x-t)}. 
\end{eqnarray*}
The map $K$ is a scalar $\Co$-kernel, which is  strictly positive definite,
but not universal.
\end{example}
\begin{proof}
We show that it is strictly positive definite. Indeed, let $x_1 ,
\ldots x_N \in X$ such that $x_i\neq x_j$ if $i\neq j$,
$c_1,\ldots,c_N\in\cuno$ and suppose  
\begin{eqnarray*}
0 =\sum_{i,j=1}^N c_i \overline{c_j} K\lft x_i, x_j\rgt & = & 
\int_{-1}^1 | \sum_{i=1}^N c_i e^{2\pi ix_i p } |^2 \de p 
\end{eqnarray*}
Since $p\mapsto | \sum_i c_i e^{2\pi ix_i p } |^2$ is continuous, it follows that 
$| \sum_i c_i e^{2\pi ix_i p } |^2=0$ for all $p\in [-1,1]$. Observing that
the functions $f_j(t)=e^{2\pi ix_jt}$ are linearly independent on
$[-1,1]$ since $x_i\neq x_j$, it follows that $c_j=0$ for all $j$. Clearly
$K$ 
is a $\Co$-kernel, but it is not universal (see Example \ref{sync}).
\end{proof}

In the next remark we show that compact-universality is exactly what
is called universality in \cite{camipoyi08}. 

\begin{remark}\label{defCaponnetto}\rm 
In \cite{camipoyi08}, a Mercer kernel $K$ is
said to be universal if, for each compact set $Z\subseteq X$ 
\begin{equation}
\label{caponnettouniversale}
\CZK = \unichiusZ{\rm span}\,\left\{ K\left(\cdot, x\right)
  v_{\left.\right|Z} \mid x\in Z , \, v\in \kk\right\} , 
\end{equation}
where $\no{\cdot}_Z {\rm
  -cl}$ denotes the closure in $\CZK$ with the uniform norm topology.
This is equivalent to require that $\hh_K$ is dense $\CXK$ with the
compact-open topology, that is,
by Theorem~\ref{main1} that $K$ is compact-universal. Indeed, by
definition of the compact-open  topology, $\hh_K$ is dense in $\CXK$ if and only
if 
\begin{equation} \label{universaleCZ}
\CZK = \unichiusZ{\left.\hh_K \right|_Z}
\end{equation}
for all compact $Z\subseteq X$. \\
Clearly\eqn{caponnettouniversale} implies
\eqn{universaleCZ}. Suppose on the other hand that
\eqn{universaleCZ} holds true.  Denote with $\widetilde{K}$ the
restriction of $K$ to $Z\times Z$. Since convergence in
$\hh_{\widetilde{K}}$ implies uniform convergence we have
\begin{equation*}
\unichiusZ{\rm span}\left\{K(\cdot,x)v_{\left.\right|_Z}\mid x\in Z,
  \, v\in \kk\right\}  \supseteq  \hh_{\widetilde{K}}
\end{equation*}
On the other hand, $\hh_{\widetilde{K}}= \left.\hh_K\right|_Z$ as a
linear space of functions (see Corollary~\ref{HZ}). Hence
\eqn{universaleCZ} implies \eqn{caponnettouniversale}. 
\end{remark}

\section{Translation invariant kernels and universality}\label{secabelian}

In this section we assume that $X$ is a locally compact second
countable topological group  with identity $e$ and we study the reproducing kernels that
are translation invariant, namely
\begin{equation}
  \label{invariant}
K(zx,zt)=K(x,t) \qquad \text{for all } x,t,z\in X.
\end{equation}
In particular we characterize all the translation invariant kernels in terms of a
unitary representation of $X$ acting on an arbitrary Hilbert space $\hh$ and an
operator $A:\hh\to\kk$. If $X$ is an abelian group, we give a more
explicit characterization in Theorem~\ref{bochteo} and
Theorem~\ref{caldo} provides a 
sufficient condition ensuring 
that the corresponding reproducing kernel Hilbert space is
universal. This condition is also necessary if $X$ is compact or
$\kk=\complex$. For scalar kernels on $\runo^d$ our result has been already
proved in \cite{mixuzh06}.

For a representation $\pi$ of $X$ on a vector space $V$ we mean a
group homomorphism from $X$ to the automorphisms of 
$V$. In particular, if $V$ is a Hilbert space, $\pi$ is unitary if it
takes values in the group of unitary operators on $V$.  In this
framewok the representation is called continuous if $\pi$ is strongly
continuous (see \cite{fol95}).

We denote by $\la$
the left regular representation of $X$ acting on $\kkx$, namely
\[ (\la_x f)(t)=f(x^{-1}t)\qquad t,x\in X,\ f\in\kkx.\]
We recall that a function $\Gamma:X\to\BK$ is {\em of completely positive type} if  
\begin{equation}\label{Gamma def. pos.}
\sum_{i,j = 1}^N \scal{ \Gamma(x_j^{-1}x_i) y_j}{y_i} \geq 0
\end{equation}
for all finite sequences $\{ x_i \}_{i=1 \ldots N}$ in $X$ and
$\{y_i\}_{i=1 \ldots N}$ in $\kk$. 

The following facts are easy to prove.
\begin{proposition}\label{ale}
Let  $K:X\times X\to\BK$ be a reproducing kernel. The following
conditions are equivalent. 
\begin{itemize}
\item[\rm (a)] $K$ is a translation invariant reproducing kernel.
\item[\rm (b)] There is a function $K_e:X\to\BK$  of completely positive type
  such that $K(x,t)=K_e(t^{-1}x)$.
\end{itemize}
If one the above conditions is satisfied, then the representation
  $\la$ leaves invariant $\hh_K$, its action on $\hh_K$ is unitary and
\begin{align}
  \label{aleale}
& K(x,t)=K_e^* \la_{x^{-1}t} K_e\qquad x,t\in X \\
& \no{K(x,x)}=\no{K_e(e)} \qquad \qquad x\in X \label{8a0}
\end{align}
\end{proposition}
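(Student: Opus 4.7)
The plan is to produce $K_e$ explicitly by the natural formula $K_e(x) := K(x,e)$ and then to derive everything else by transporting translation invariance through the canonical feature map $x \mapsto \ve_x$. For (a) $\Rightarrow$ (b), setting $z = t^{-1}$ in the invariance relation immediately yields $K(x,t) = K(t^{-1}x, e) = K_e(t^{-1}x)$. For complete positivity, I would apply translation invariance with $z = x_j$ to rewrite $K_e(x_j^{-1}x_i) = K(x_j^{-1}x_i, e) = K(x_i, x_j)$, so the sum in (\ref{Gamma def. pos.}) becomes $\sum_{i,j}\scal{K(x_i,x_j)y_j}{y_i} \geq 0$, nonnegative by the reproducing-kernel property. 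The converse (b) $\Rightarrow$ (a) is immediate from $K(zx, zt) = K_e((zt)^{-1}(zx)) = K_e(t^{-1}x)$, with positive-definiteness of $K$ transferring to that of $K_e$ under the same substitution.

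For the invariance and unitarity of $\lambda$, I would exploit the canonical feature map together with the density of $\Spanno{\ve_x y \mid x\in X,\, y\in\kk}$ in $\hh_K$. By the reproducing property and translation invariance, $(\lambda_z \ve_x y)(t) = K(z^{-1}t,x)y = K(t,zx)y = (\ve_{zx}y)(t)$, so $\lambda_z \ve_x = \ve_{zx}$ on this total set. The invariance of $K$ then gives
\[
\Big\|\sum_i \ve_{zx_i} y_i\Big\|_K^2 = \sum_{i,j}\scal{K(zx_j,zx_i) y_i}{y_j} = \sum_{i,j}\scal{K(x_j, x_i) y_i}{y_j} = \Big\|\sum_i \ve_{x_i} y_i\Big\|_K^2,
\]
showing $\lambda_z$ is well-defined and isometric on a dense subspace of $\hh_K$. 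Extending by continuity and observing that $\lambda_{z^{-1}}$ provides a two-sided inverse yields a unitary action of $X$ on $\hh_K$.

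The identities (\ref{aleale}) and (\ref{8a0}) then drop out, reinterpreting the symbol $K_e$ in (\ref{aleale}) as the evaluation operator $\ve_e : \kk \to \hh_K$ in line with the paper's convention $K_x = \ve_x$. Using translation invariance with $z = x^{-1}$ together with the identity $\lambda_{x^{-1}t}\ve_e = \ve_{x^{-1}t}$ just established, we get $K(x,t) = K(e, x^{-1}t) = \ve_e^* \ve_{x^{-1}t} = \ve_e^* \lambda_{x^{-1}t} \ve_e = K_e^* \lambda_{x^{-1}t} K_e$. For (\ref{8a0}), note that $K(x,x) = K_e(x^{-1}x) = K_e(e)$ is in fact independent of $x$, so $\no{K(x,x)} = \no{K_e(e)}$ trivially. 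There is no deep obstacle here; the only genuinely delicate point is checking that $\lambda_z$ is well-defined on the linear span before passing to the closure, which the displayed isometry estimate handles at once, together with the harmless overload of the symbol $K_e$ between items (b) and (\ref{aleale}).
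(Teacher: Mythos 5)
Your proposal is correct and follows essentially the same route as the paper: define $K_e(x)=K(x,e)$, transfer positivity through the substitution $z=x_j$, establish $\la_z K_x = K_{zx}$ together with the isometry on the total set $\set{K_x y}$, and obtain\eqn{aleale} from the factorization $K(x,t)=K_x^\ast K_t$ with $K_x=\la_x K_e$. The only cosmetic difference is that you verify unitarity via the norm of a finite linear combination while the paper checks the inner product on pairs of generators, which is the same computation up to polarization.
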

The notation $K_e$ for the function of completely positive type associated with
the reproducing kernel $K$ is consistent with the definition 
given by$\eqn{ern}$ since
\[(K_ey)(x)=K_e(x)y  \qquad y\in\kk,\ x\in X.\] 

\begin{proof}[Proof of Proposition~\ref{ale}]
Assume (a). Given $x,t\in X$, \eqn{ern} and\eqn{invariant} give
\[ K_e(t^{-1}x)=K(t^{-1}x,e)=K(x, t).\]
Since $K$ is a reproducing kernel, $K_e$ is of completely positive
type, so that (b) holds true.\\
Assume (b). Clearly $K$ is a translation invariant reproducing kernel,
so that (a) holds true.

Suppose  now that $K$ is a translation invariant reproducing kernel. Observe that, given $t\in
X$ and $y\in\kk$, 
\[ (\la_x K_ty)(z)=(K_t
y)(x^{-1}z)=K(x^{-1}z,t)y=K(z,xt)y=(K_{xt}y)(z)\qquad x,z\in X,\] 
that is, $\la_xK_t=K_{xt}$. Moreover
\[
\begin{split}
\scal{\la_xK_{t_1}y_1}{\la_xK_{t_2}y_2}_K& =\scal{K_{xt_1}y_1}{K_{xt_2}y_2}_K=
\scal{K(xt_2,xt_1)y_1}{y_2} \\
&=\scal{K(t_2,t_1)y_1}{y_2}=\scal{K_{t_1}y_1}{K_{t_2}y_2}_K.  
\end{split}
\]
This means that $\la$ leaves  
the set $\set{K_xy\mid x\in X, y\in\kk}$ invariant and its action is
  unitary. First two claims now follow recalling that $\set{K_xy\mid x\in X, y\in\kk}$ 
is total in $\hh_K$. To prove\eqn{aleale} observe that
\[K(x,t)=K_x^*K_t= K_e^*\la_x^*\la_t K_e=K_e^*\la_{x^{-1}t} K_e\]
for all $x,t\in X$. 
\end{proof}
Notice that, if $K$ is a translation invariant kernel,\eqn{8a0}
implies that the elements of $\hh_K$ are bounded functions.
The following lemma characterizes the translation invariant kernels
that are Mercer or $\Co$.  
\begin{lemma}\label{abelianco}
Let $K_e:X\to\kk$ be a function of completely positive type and let $K$ be
the corresponding translation invariant reproducing kernel. The
following conditions are equivalent. 
\begin{itemize}
\item[\rm (a)] The map $K$ is a Mercer kernel.
\item[\rm (b)] For all $y\in\kk$, $K_e(\cdot)y\in\CXK$. 
\item[\rm (c)] The representation $\la$ is continuous on $\hh_K$.
\end{itemize}
Moreover, the map $K$ is a $\Co$-kernel if and only if $K_e(\cdot)y\in\CoXK$ for
  all $y\in\kk$.
\end{lemma}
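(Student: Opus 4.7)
My strategy is to combine Proposition~\ref{equivalenza in Co} (the general Mercer/$\Co$ test) with the identity
$(\ve_x y)(t)=K(t,x)y=K_e(x^{-1}t)y$
coming from \eqref{ern} and Proposition~\ref{ale}. Equation~\eqref{8a0} shows that $x\mapsto\no{K(x,x)}=\no{K_e(e)}$ is constant, so the (local) boundedness hypothesis in both parts of Proposition~\ref{equivalenza in Co} is automatic. Hence $K$ is Mercer, respectively $\Co$, exactly when $\ve_x y\in\CXK$, respectively $\ve_x y\in\CoXK$, for all $x\in X$ and $y\in\kk$. Since left translation $t\mapsto x^{-1}t$ is a homeomorphism of $X$ that preserves both continuity and vanishing at infinity, the condition on $\ve_x y$ is equivalent to the same condition on $K_e(\cdot)y$. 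This yields (a)$\Leftrightarrow$(b) together with the final $\Co$ assertion in one stroke.

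For (b)$\Rightarrow$(c) I will use that $\{\ve_t y:t\in X,\,y\in\kk\}$ is total in $\hh_K$ and that, by Proposition~\ref{ale}, each $\la_x$ is unitary on $\hh_K$ and satisfies $\la_x\ve_t=\ve_{xt}$. A direct expansion of the squared $\hh_K$-norm using the reproducing identity $\scal{\ve_a y_1}{\ve_b y_2}_K=\scal{y_1}{K(a,b)y_2}$ and the relation $K_e(u^{-1})=K_e(u)^\ast$ gives
\[\no{\ve_{xt}y-\ve_{x_0 t}y}_K^2=2\scal{K_e(e)y}{y}-2\,\Re\scal{K_e(t^{-1}x^{-1}x_0 t)y}{y}.\]
As $x\to x_0$, the argument $t^{-1}x^{-1}x_0 t$ converges to $e$ in $X$, and by (b) the function $K_e(\cdot)y$ is norm-continuous, so the right-hand side tends to $0$. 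A standard three-epsilon argument, combined with $\no{\la_x}=1$, then extends strong continuity of $\la$ from the total set to all of $\hh_K$.

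For the converse (c)$\Rightarrow$(b), fix $y\in\kk$ and observe that $\la_x(\ve_e y)=\ve_x y$ by Proposition~\ref{ale}, so that strong continuity of $\la$ gives continuity of $x\mapsto \ve_x y$ from $X$ into $\hh_K$. Composing with the bounded evaluation $\ve_e^\ast$ produces continuity of
\[x\longmapsto \ve_e^\ast(\ve_x y)=(\ve_x y)(e)=K_e(x^{-1})y\]
as a $\kk$-valued function, and continuity of inversion in the topological group $X$ then delivers $K_e(\cdot)y\in\CXK$.

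The only step with any real content is the displayed norm identity in the second paragraph; everything else is a bookkeeping translation between the three formulations, so I do not anticipate a genuine obstacle.
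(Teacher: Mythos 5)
Your proposal is correct and follows essentially the same route as the paper: the equivalence (a)$\Leftrightarrow$(b) and the $\Co$ statement via Proposition~\ref{equivalenza in Co} together with $(\ve_x y)(t)=K_e(x^{-1}t)y$ and\eqn{8a0}, the implication towards (c) by expanding $\no{\ve_{xt}y-\ve_{x_0t}y}_K^2$ through the reproducing property on the total set $\set{\ve_t y}$, and the converse by evaluating $\la_{x^{-1}}\ve_e y$ against $\ve_e^\ast$ exactly as in\eqn{aleale}. The only cosmetic difference is that the paper invokes the standard reduction to continuity at the identity for unitary representations, whereas you verify continuity at an arbitrary $x_0$ and close with a three-epsilon argument; the underlying computation is the same.
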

\begin{proof}
The equivalence between (a) and (b) as well as the statement about
$\Co$-kernel is a consequence of Proposition~\ref{equivalenza in Co},
observing that $(K_xy)(t)=K_e(x^{-1}t)y$ and\eqn{8a0} holds. \\ 
Assume that $K$ is a Mercer kernel. Since $\la$ is a unitary
representation and the set $\{K_ty\mid t\in X,y\in\kk\}$ is total in
$\hh_K$, it is enough to  
check that for any $t\in X$ and $y\in\kk$ the function $x\mapsto \la_xK_ty$
is continuous at the identity. Indeed, observe that
\[
\begin{split}\no{\la_x K_t y - K_t y}_K^2& =\no{K_{xt}y-K_t y}_K^2 \\
& = \scal{\left(K(xt,xt)-K(t,xt)-K(xt,t)+K(t,t)\right)y}{y} \\
& = \scal{\left(2K_e(e)-K_e(t^{-1}x^{-1}t)-K_e(t^{-1}xt)\right)y}{y} 
\end{split},\]
which is continuous at the identity by assumption on
$K_e$. Conversely, if $\la$ is continuous,\eqn{aleale} gives that
\[ K_e(x)y=K(x,e)y=K_e^*\la_{x^{-1}}K_ey,\]
so that $K_e(\cdot)y$ is continuous.
\end{proof}
The following theorem characterizes the translation invariant
reproducing kernels.
\begin{proposition}\label{erni}
Let $\pi$ be a unitary representation of $X$ acting on a separable Hilbert space
$\hh$ and $A:\hh\to\kk$  a bounded operator. Define  
\beeq{w}{ W:\hh\to\kkx\,, \qquad (W v)(x)=A\pi_{x^{-1}} v\,.}
$W$ is a unitary map from $\ker{W}^\perp$ onto the reproducing
kernel Hilbert space $\hh_K$ with translation invariant kernel
\begin{equation}
  \label{ernern}
K(x,t)= A\pi_{x^{-1}t} A^* \qquad x,t\in X.  
\end{equation}
Moreover $W$ intertwines the representations $\pi$ and $\la$. Finally
$W$ is unitary if and only if the only $\pi$-invariant closed subspace
of $\ker{A}$ is the null space.
\end{proposition}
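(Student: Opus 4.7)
The strategy is to realize $W$ as a feature operator and invoke Proposition~\ref{featuremap}. Specifically, I would set $\gamma_x = \pi_x A^* : \kk \to \hh$ for each $x \in X$; this is a bounded operator since $\pi_x$ is unitary and $A^*$ is bounded, with $\|\gamma_x\| = \|A^*\|$. Then for $v \in \hh$ and $y \in \kk$, one checks $\langle \gamma_x^* v, y\rangle = \langle v, \pi_x A^* y\rangle_\hh = \langle A\pi_{x^{-1}} v, y\rangle$, so $\gamma_x^* v = A\pi_{x^{-1}} v$, which is exactly $(Wv)(x)$ as in\eqn{defW}. The associated kernel from\eqn{kernel_feature} is $K(x,t) = \gamma_x^* \gamma_t = A\pi_{x^{-1}}\pi_t A^* = A\pi_{x^{-1}t} A^*$, giving\eqn{ernern}. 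Proposition~\ref{featuremap} then immediately yields that $W$ is a partial isometry from $\hh$ onto $\hh_K$, hence unitary from $\ker W^\perp$ onto $\hh_K$.

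Next I would verify the intertwining property by direct computation: for $v \in \hh$, $y \in X$, and any $x \in X$,
\[
(W\pi_y v)(x) = A\pi_{x^{-1}} \pi_y v = A\pi_{(y^{-1}x)^{-1}} v = (Wv)(y^{-1}x) = (\la_y Wv)(x),
\]
so $W\pi_y = \la_y W$ for all $y \in X$.

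For the final claim, since $W$ is a partial isometry onto $\hh_K$, it is unitary exactly when $\ker W = \{0\}$. I would identify $\ker W$ explicitly:
\[
\ker W = \{ v \in \hh \mid A\pi_{x^{-1}} v = 0 \text{ for all } x \in X \} = \{ v \in \hh \mid \pi_x v \in \ker A \text{ for all } x \in X \}.
\]
This set is clearly a closed $\pi$-invariant subspace contained in $\ker A$ (take $x=e$). Conversely, if $M \subseteq \ker A$ is any closed $\pi$-invariant subspace, then for every $v \in M$ and every $x$, $\pi_x v \in M \subseteq \ker A$, so $v \in \ker W$. Thus $\ker W$ is the largest $\pi$-invariant closed subspace of $\ker A$, and $W$ is unitary iff this subspace is trivial.

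There is no real obstacle here: the proposition is essentially a repackaging of Proposition~\ref{featuremap} for the specific feature map $\gamma_x = \pi_x A^*$, together with a routine identification of $\ker W$. The only point requiring care is making sure that the ``largest $\pi$-invariant closed subspace of $\ker A$'' characterization is stated and proved cleanly, which is the slightly non-mechanical step.
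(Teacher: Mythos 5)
Your proof is correct and follows exactly the paper's own argument: define the feature map $\gamma_x=\pi_xA^*$, invoke Proposition~\ref{featuremap} to get the partial isometry and the kernel, and identify $\ker W$ as the largest closed $\pi$-invariant subspace of $\ker A$. The only difference is that you write out the intertwining computation that the paper dismisses as trivial, which is a harmless (and welcome) addition.
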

\begin{proof}
Define $\ga_x:\kk\to\hh$ as $\ga_x=\pi_xA^*$, so that
$(Wv)(x)=\ga_x^*v=A\pi_{x^{-1}}v$. The claim is now consequence of
Proposition~\ref{featuremap}, up the last statement. The fact that $W$
intertwines $\pi$ with $\la$ is trivial. Finally, by
Proposition~\ref{featuremap}, $W$ is unitary if and only if is
injective. By definition
\[\ker{W}=\set{v\in\hh\mid \pi_xv\in\ker{A}\ \forall x\in X}.\]
Hence $\ker{W}$ is a closed subspace of $\ker{A}$ invariant with respect to $\pi$.  
Conversely any $\pi$-invariant closed subspace of $\ker{A}$ is contained in $\ker{W}$.
\end{proof}
Proposition~\ref{ale} and~\ref{erni} show that any translation
invariant kernel is of the form $K(x,t)= A\pi_{x^{-1}t} A^*$ for some unitary representation
$\pi$ acting on a Hilbert space $\hh$ and a bounded operator
$A:\hh\to\kk$.  In particular,  if $\pi$ is a continuous representation, then
$K$ is a Mercer kernel and for any Mercer kernel $\pi$ can be assumed
to be continuous and $\hh$ separable. Moreover, the reproducing kernel
Hilbert space 
$\hh_K$ is embedded in $\hh$ by the feature operator $W$ defined by\eqn{w}. 
Observe that if the representation $\pi$ is irreducible or if $A$ is
injective, then $W$ is unitary.  \\
If $\kk=\complex$, the operator $A$ is of the form $Av=\scal{v}{w}_\hh$
for some $w\in\hh$, so that $(Wv)(x)=\scal{v}{\pi_xw}_\hh$. This operator
is well know in harmonic analysis as {\em wavelet operator} \cite{fuhr05}.
\begin{remark}\rm
Notice that any translation invariant kernel $K$ is the sum of translation invariant
kernels associated with cyclic representations. Indeed, let $\pi$ be
a unitary representation defining $K$ by means of\eqn{ernern}. 
Since any unitary representation is the direct sum of a family of cyclic representations,
then $\hh=\oplus_{i\in I} \hh_i$ where each $\hh_i$ is a closed
$\pi$-invariant subspace and the action of $\pi$ on $\hh_i$ is
cyclic. Denote by
$P_i$ the orthogonal projection on $\hh_i$, then 
\[K(x,t)=\sum_{i\in I} AP_i\pi_{x^{-1}t}P_iA^* =\sum_{i\in
  I}K^i(x,t),\] where the series converges in the strong operator
topology and the reproducing kernels $K^i$ are
$K^i(x,t)=A_i\pi^i_{x^{-1}t}A_i^*$ where $\pi^i$ and $A_i$ are the
restrictions of $\pi$ and $A$ to $\hh_i$,
respectively. Proposition~\ref{suma} implies that
$\hh_K=\sum_{i\in  I}\hh_{K^i}$. \\
For scalar kernels, we can always assume that $\pi$ is cyclic
itself. Indeed, the wavelet operator is $(Wv)(x)=\scal{v}{\pi_xw}_\hh$ for
some $w\in \hh$, so that the associated kernel $K$ is determined only
by the cyclic subrepresentation of $\pi$ containing $w$.
\end{remark}

\subsection{Abelian groups}\label{sec_abeliano}

In this section, we specialize the previous discussion to the case in
which $X$ is an abelian group. With this assumption, we can give a
more explicit construction of translation invariant Mercer kernels,
which is related to a generalization of Bochner theorem for scalar
functions of positive type, \cite{boc59,faha72}.

We denote the product in $X$ additively and the identity by $0$,
since the main example is $\runo^d$.  We let $\hat{X}$ be the {\em
dual group} of $X$ and we denote by $\de x$ the Haar measure on $X$.

Now, we briefly recall the definition of Fourier
transform, see for example \cite{fol95}. 
If $\phi\in \LunolXK$, its Fourier transform
$\F{\phi}:\hat{X}\to \kk$ is given by
\[\F{\phi}(\chi)=\int_X \overline{\chi(x)}\ \phi(x)\de x . \]
We denote by $\de \chi$ the Haar measure on $\xh$ normalized so that
$\ff$ extends to a unitary operator from $L^2 (X,\de x ; \kk)$ onto
$L^2 (\xh,\de \chi ; \kk)$. 
If $\mu$ is a positive measure on $X$  and $\varphi\in L^1(X,\mu;\kk)$,
let $\F{\varphi\mu}:\hat{X}\to \kk$ be given by 
\[\F{\varphi\mu}(\chi)=\int_X \overline{\chi(x)}\varphi(x)\ \de\mu(x) .\]
If $\mu$ is a complex measure\footnote{That is, a $\sigma$-additive map
  $\mu:{\mathcal B}(X)\to\complex$.} on $X$, we denote $\F{\mu}=\F{h|\mu|}$
where $|\mu|$ is the total variation of $\mu$ and $h\in L^1(X,|\mu|)$
is the density of $\mu$ with respect to $|\mu|$.

By general properties of Fourier transform, $\F{\phi}$ and
$\F{\mu}$ are bounded continuous functions on $\xh$ (actually,
$\F{\phi} \in\CoXK$). Moreover, $\F{\phi} = 0$ [respectively,
$\F{\mu} = 0$] if and only if $\phi = 0$ in $\LunolXK$ [resp.,
$\mu = 0$].

We recall that a {\em positive operator valued measure} ({\em POVM}) on $\xh$
with values in $\kk$ is a map $Q : \bo{\xh} \frecc \elle{\kk}$ such
that $Q(\zh) \geq 0$ for all $\zh\in\bo{\xh}$, and 
\[
\sum_i Q(\zh_i) = Q(\cup_i \zh_i),
\]
for every denumerable sequence of disjoint Borel sets $\{
\zh_i \}_i$ where the sum converges in the weak  operator topology. A positive
operator valued measure $Q$ is a {\em projection valued  measure} if $Q(\zh)^2
= 1$ for all $\zh\in\bo{\xh}$. If $\fh:\xh\to\complex$ is a bounded measurable
function, $\int_{\xh}\fh(\chi) \de Q(\chi)$ is the unique bounded
operator $\fh(Q)$ defined by
\[ \scal{\fh(Q)y}{y^\prime}=\int_{\xh} \fh(\chi) \de
Q_{y,y^\prime}(\chi)\qquad y,y^\prime\in\kk,\]
where $Q_{y,y^\prime}$ is the complex measure on $\xh$ given by
$Q_{y,y^\prime}(\zh)=\scal{Q(\zh)y}{y^\prime}$ for all Borel subsets $\zh$.

Next theorem shows that there is a one to one correspondence between
translation invariant Mercer kernels on $X$ and positive operator valued
measures on $\xh$. For scalar kernels this result is Bochner theorem
\cite{boc59}. For vector valued kernels, it is proved in
\cite{falb69,faha72} under the 
weaker assumption that $K_0$ is a function of positive type, namely
that 
\begin{equation}\label{posB}
\sum_{i,j = 1}^N c_i\overline{c_j}\scal{ K_0 (x_i - x_j) y}{y} \geq 0
\end{equation}
for all finite sequences $\{ x_i \}_{i=1 \ldots N}$ in $X$,
$\{c_i\}_{i=1 \ldots N}$ in $\complex$ and $y\in\kk$. The fact that
conditions\eqn{Gamma def. pos.} and\eqn{posB} are equivalent for
abelian groups is a consequence of \cite[Lemma 3.1]{davies}.
In the following, assuming\eqn{Gamma def. pos.}, we give a 
proof simpler than the one provided in \cite{falb69,faha72}. 
\begin{theorem}\label{bochteo}
If $Q : \bo{\xh} \frecc \elle{\kk}$ is a positive operator valued measure, then
\begin{equation}\label{bochint}
K(x,t) = \int_{\xh} \chi(t-x) \de Q (\chi)
\end{equation}
is a translation invariant $\kk$-Mercer kernel on $X$. Conversely, if
$K$ is a translation invariant $\kk$-Mercer kernel on $X$, then there
exists a unique positive operator valued measure $Q$ such that
$(\ref{bochint})$ holds. 
\end{theorem}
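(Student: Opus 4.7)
The plan is to prove the two directions separately, using the structure theorem Proposition~\ref{erni} together with the SNAG theorem for the nontrivial direction.

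For the easy direction, suppose $Q$ is a POVM on $\hat X$. Translation invariance of $K(x,t)=\int_{\hat X}\chi(t-x)\,dQ(\chi)$ is immediate because the integrand depends only on $t-x$. For the completely positive type property, given $x_1,\dots,x_N\in X$ and $y_1,\dots,y_N\in\kk$, I would use that $\chi$ is a character, so $\chi(x_j-x_i)=\chi(x_j)\overline{\chi(x_i)}$, to rewrite
\[
\sum_{i,j}\scal{K(x_i,x_j)y_j}{y_i}=\int_{\hat X}\scal{Q(d\chi)\sum_j\chi(x_j)y_j}{\sum_i\chi(x_i)y_i}\geq 0.
\]
Finally, to check that $K$ is Mercer I would invoke Lemma~\ref{abelianco}: continuity of $K_e(\cdot)y=\int_{\hat X}\overline{\chi(\cdot)}y\,dQ(\chi)$ on $X$ follows by dominated convergence, since $|\chi(x)|=1$ and $Q(\hat X)$ is a bounded operator.

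For the converse, I would first apply Proposition~\ref{erni} to obtain a separable Hilbert space $\hh$, a unitary representation $\pi$ of $X$ on $\hh$, and a bounded operator $A\colon\hh\to\kk$ with $K(x,t)=A\pi_{t-x}A^\ast$ (writing $X$ additively). Restricting to $(\ker W)^\perp$, which is $\pi$-invariant, I may assume $W$ is unitary, so that $\pi$ on $\hh$ is unitarily equivalent to $\lambda$ on $\hh_K$; by Lemma~\ref{abelianco} applied to the Mercer kernel $K$, the representation $\pi$ is continuous. Now SNAG (\cite{fol95}) provides a unique projection valued measure $P\colon\mathcal B(\hat X)\to\elle{\hh}$ with $\pi_x=\int_{\hat X}\chi(x)\,dP(\chi)$. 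Setting $Q(\hat Z)=AP(\hat Z)A^\ast$, one verifies $\sigma$-additivity in the weak operator topology and positivity of $Q$, so $Q$ is a POVM on $\hat X$ with values in $\elle{\kk}$, and
\[
K(x,t)=A\pi_{t-x}A^\ast=\int_{\hat X}\chi(t-x)\,A\,dP(\chi)\,A^\ast=\int_{\hat X}\chi(t-x)\,dQ(\chi).
\]

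For uniqueness, suppose $Q$ and $Q'$ both represent $K$. Then for each $y,y'\in\kk$ the complex measures $Q_{y,y'}$ and $Q'_{y,y'}$ on $\hat X$ satisfy $\mathcal F(Q_{y,y'})(x)=\scal{K(0,-x)y}{y'}=\mathcal F(Q'_{y,y'})(x)$ for all $x\in X$; by injectivity of the Fourier transform on complex measures of $\hat X$ (recalled in Section~\ref{sec_abeliano}) we conclude $Q_{y,y'}=Q'_{y,y'}$, hence $Q=Q'$. The main obstacle is the existence part, which rests on having SNAG available, and this is exactly why the structural representation in Proposition~\ref{erni} is set up beforehand; the passage from the PVM $P$ to the POVM $Q=AP(\cdot)A^\ast$ is the device that turns a scalar/projection valued Bochner statement into its operator valued form.
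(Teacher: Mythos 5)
Your converse direction and your uniqueness argument coincide with the paper's proof: Proposition~\ref{erni} together with Proposition~\ref{ale} and Lemma~\ref{abelianco} gives $K(x,t)=A\pi_{t-x}A^\ast$ with $\pi$ continuous, SNAG yields the projection valued measure $P$, one sets $Q=AP(\cdot)A^\ast$, and uniqueness follows from injectivity of the Fourier transform of measures on $\xh$. Where you genuinely differ is the forward direction. The paper does not verify complete positivity and continuity by hand: it invokes Neumark's dilation theorem to write $Q(\zh)=AP(\zh)A^\ast$ for a projection valued measure $P$ on a separable Hilbert space, defines $\pi_x=\int_{\xh}\chi(x)\,\de P(\chi)$, and reads off $K(x,t)=A\pi_{t-x}A^\ast$, so that the kernel and Mercer properties follow at once from Proposition~\ref{erni} and Lemma~\ref{abelianco}. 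Your direct route is more elementary and works, but two steps need more care than you give them. First, the identity $\sum_{i,j}\scal{K(x_i,x_j)y_j}{y_i}=\int_{\xh}\scal{\de Q(\chi)\,v(\chi)}{v(\chi)}$ with $v(\chi)=\sum_j\chi(x_j)y_j$ is not a pure definition-chase, because $\int\fh\,\de Q$ is defined only weakly; you should approximate the functions $\chi\mapsto\chi(x_j)$ by simple functions, use positivity of each $Q(\zh)$ on the resulting finite sums, and pass to the limit. Second, dominated convergence applied to the complex measures $Q_{y,y^\prime}$ yields only \emph{weak} continuity of $x\mapsto K_0(x)y$, whereas Lemma~\ref{abelianco}(b) demands norm continuity, i.e.\ $K_0(\cdot)y\in\CXK$. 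This is recoverable by the standard positive-type estimate
\[
\no{K_0(s-x)y-K_0(s-t)y}^2\ \le\ \no{K_0(0)}\,\scal{\left(2K_0(0)-K_0(x-t)-K_0(t-x)\right)y}{y}\qquad\forall s ,
\]
whose right-hand side tends to $0$ as $x-t\to 0$ by the weak continuity you established; but the upgrade should be stated. The Neumark route buys both points for free, at the price of quoting a dilation theorem.
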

We say that $Q$ in (\ref{bochint}) is the  positive operator
  valued measure {\em associated} to the translation invariant Mercer kernel
$K$.\\ 
\begin{proof}[Proof of Theorem~\ref{bochteo}]  
If $Q:\bo{\xh} \frecc
  \elle{\kk}$ is a positive operator valued measure, by Neumark
  dilation theorem \cite{rina90} there exist a separable Hilbert space
  $\hh$, a projection valued measure $P:\bo{\xh} \frecc \elle{\hh}$
  and a bounded operator $A: \hh \frecc \kk$ such that 
\beeq{Naimark}{ Q(\zh) = A P(\zh) A^\ast \qquad \forall \zh\in\bo{\xh} .  }
Let $\pi$ be the continuous unitary representation of $X$ acting on
$\hh$ given by
\beeq{SNAG}{ \pi(x) = \int_{\xh} \chi(x) \de P(\chi) , } 
see~\cite{fol95}. Eq.\eqn{bochint}
  then becomes $K(x,t) = A\pi_{ t-x} A^\ast$, so that $K$ is a
  translation invariant Mercer kernel by Proposition~\ref{erni} and Lemma~\ref{abelianco}. \\
Conversely, by Proposition~\ref{erni} and Lemma~\ref{abelianco}, every
translation invariant Mercer kernel is of 
  the form $K(x,t) = A\pi_{ t-x} A^\ast$ for some continuous
  unitary representation $\pi$ of $X$ in a separable Hilbert space
  $\hh$ and some bounded operator $A:\hh\frecc\kk$. By SNAG theorem
  \cite{fol95}, there is then a projection valued measure
  $P:\bo{\xh}\frecc\elle{\hh}$ such that\eqn{SNAG} holds and\eqn{bochint}
 follows defining the POVM $Q$ as in\eqn{Naimark}. \\
Finally, uniqueness of $Q$ follows from
\[
\scal{K_0 (x) y}{y^\prime} = \int_{\xh} \overline{\chi(x)} \de Q_{y,y^\prime} (\chi)
=\F{Q_{y,y^\prime}}(x)\]
by injectivity of Fourier transform of measures on $\xh$.
\end{proof}

The next proposition is a useful tool to construct translation invariant Mercer kernels.
\begin{theorem}\label{abelianern}
Let $\nuh$ be a measure on $\xh$ and $A:\LdueXE\to \kk$ be
a bounded operator. For all $y,y^\prime\in\kk$ let
\beeq{ff}{\scal{K(x,t)y}{y^\prime}=\int_{\xh} \chi(t-x)
  \scal{(A^*y)(\chi)}{(A^*y^\prime)(\chi)} \ 
\de\nuh(\chi).}
Then $K$ is a translation invariant Mercer kernel and the
corresponding reproducing kernel Hilbert space is embedded in
$\LdueXE$ by means of the feature operator $W:\LdueXE\to \hh_K$
\begin{align}
  \label{fff}
  (W\fh)(x) & = A \fh^x \qquad\text{where }\qquad
  \fh^x(\chi)=\overline{\chi(x)}\fh(\chi)\\ 
 \scal{(W\fh)(x)}{y} & =\int_{\xh}\overline{\chi(x)}
 \scal{\fh(\chi)}{(A^*y)(\chi)}\ \de\nuh(\chi) \nonumber.
\end{align}
Conversely, any translation invariant Mercer kernel is of the above form for
some positive measure $\nuh$ and bounded operator $A:\LdueXE\to\kk$.
\end{theorem}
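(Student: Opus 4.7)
Plan: The two directions of Theorem~\ref{abelianern} will be handled separately. For the forward implication, I would invoke Proposition~\ref{featuremap} with feature space $\hh=\LdueXE$ and feature map $\gamma_x:\kk\to\LdueXE$ defined by $(\gamma_x y)(\chi)=\chi(x)(A^*y)(\chi)$. Since $|\chi(x)|=1$, pointwise multiplication by $\chi(x)$ is unitary on $\LdueXE$, so $\gamma_x$ is bounded with $\no{\gamma_x}=\no{A}$. A short computation shows
$$\scal{\gamma_x^*\gamma_t y}{y'}_{\kk}=\int_{\xh}\chi(t)\overline{\chi(x)}\scal{(A^*y)(\chi)}{(A^*y')(\chi)}_{\kk}\de\nuh(\chi),$$
which equals $\scal{K(x,t)y}{y'}_{\kk}$ by\eqn{ff} (here I use $\chi(t)\overline{\chi(x)}=\chi(t-x)$). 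Proposition~\ref{featuremap} then yields the feature operator $(W\fh)(x)=\gamma_x^*\fh=A\fh^x$, which is precisely\eqn{fff}. Translation invariance of $K$ is evident from the formula. To see that $K$ is Mercer, by Lemma~\ref{abelianco} it is enough to verify continuity of $x\mapsto K_0(x)y$, which follows at once from continuity of $x\mapsto \gamma_x y$ in $\LdueXE$; the latter is a dominated convergence argument applied to
$$\no{\gamma_x y-\gamma_{x_0}y}^2_{\LdueXE}=\int_{\xh}|\chi(x)-\chi(x_0)|^2\no{(A^*y)(\chi)}^2\de\nuh(\chi),$$
using continuity of characters and $\no{A^*y}^2\in L^1(\xh,\nuh)$.

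For the converse, Theorem~\ref{bochteo} produces a POVM $Q:\bo{\xh}\to\elle{\kk}$ with $K(x,t)=\int_{\xh}\chi(t-x)\de Q(\chi)$, and the task reduces to realising $Q$ in the form $Q(\zh)=A M_{\chi_\zh}A^*$ for a canonical multiplication projection-valued measure on $\LdueXE$. Fixing an orthonormal basis $(e_i)_{i\in\nat}$ of $\kk$, I would set $\nuh=\sum_i 2^{-i}Q_{e_i,e_i}$, where $Q_{y,y'}(\zh)=\scal{Q(\zh)y}{y'}$; this is a finite positive measure on $\xh$ since $Q(\xh)$ is bounded. If $\nuh(\zh)=0$, then $\scal{Q(\zh)e_i}{e_i}=0$ for every $i$, forcing $Q(\zh)^{1/2}=0$ and hence $Q(\zh)=0$, so every $Q_{y,y'}$ is absolutely continuous with respect to $\nuh$. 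A measurable operator-valued Radon--Nikodym derivative $B:\xh\to\elle{\kk}^+$ then exists, with $Q(\zh)=\int_{\zh}B(\chi)\de\nuh(\chi)$ in the weak sense. Defining $A^*:\kk\to\LdueXE$ by $(A^*y)(\chi)=B(\chi)^{1/2}y$, the identity $\no{A^*y}^2_{\LdueXE}=\scal{Q(\xh)y}{y}\leq\no{Q(\xh)}\no{y}^2$ shows that $A^*$ is bounded, and substituting into the right-hand side of\eqn{ff} recovers the original $K$ through
$$\int_{\xh}\chi(t-x)\scal{B(\chi)^{1/2}y}{B(\chi)^{1/2}y'}_{\kk}\de\nuh(\chi)=\int_{\xh}\chi(t-x)\de Q_{y,y'}(\chi).$$

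The step I expect to be the main obstacle is the construction of the measurable operator-valued Radon--Nikodym derivative $B$: arranging simultaneous measurability of $\chi\mapsto B(\chi)$ and of the square root $\chi\mapsto B(\chi)^{1/2}$, so that $A^*$ is a well-defined bounded linear operator into $\LdueXE$, depends on the separability of $\kk$ and on standard but technical results on measurable fields of positive operators. The forward direction is essentially a verification that the chosen $\gamma_x$ plugs correctly into Proposition~\ref{featuremap}.
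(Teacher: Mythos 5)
Your forward direction is correct and is essentially the paper's own argument: the same feature map $(\gamma_x y)(\chi)=\chi(x)(A^*y)(\chi)$ is fed into Proposition~\ref{featuremap}, and your direct dominated-convergence verification of the continuity of $x\mapsto\gamma_x y$ (hence of $K_0(\cdot)y$, via Lemma~\ref{abelianco}) is an acceptable substitute for the paper's appeal to Theorem~\ref{bochteo}.

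The converse, however, has a genuine gap, located exactly at the step you flagged --- but the obstruction is not a technical measurability issue that results on measurable fields of operators would settle: the operator-valued Radon--Nikodym derivative $B$ you postulate simply does not exist in general when $\kk$ is infinite dimensional. The paper says so explicitly just before Lemma~\ref{Q(Z)inL(Y)} (``in general it is not true that the POVM $Q$ has an operator valued density''), and this is precisely why Proposition~\ref{caldo} claims to reach \emph{all} translation invariant Mercer kernels only when $\kk$ is finite dimensional or $X$ is compact. Concretely, take $X=\xh=\R$, $\kk=L^2([0,1],\de s)$, and let $Q(\zh)$ be multiplication by $1_{\zh\cap[0,1]}$; the associated kernel is translation invariant and Mercer by Theorem~\ref{bochteo}, and each $Q_{y,y^\prime}$ has density $y(\chi)\overline{y^\prime(\chi)}$ with respect to Lebesgue measure, so any candidate $B$ would satisfy $\scal{B(\chi)y}{y}=|y(\chi)|^2$ for a.e.\ $\chi$ for each fixed $y$; running over a countable family of unit vectors $y_n$ with $\sup_n|y_n(\chi)|^2=+\infty$ a.e.\ forces $\no{B(\chi)}=+\infty$ a.e. Your reduction therefore only captures kernels whose POVM admits an operator density, a strictly smaller class than the one described by\eqn{ff}: a general bounded $A:\LdueXE\to\kk$ yields $Q(\zh)=AP(\zh)A^*$ with $P$ the multiplication PVM, and this Naimark-type form is genuinely more general than $Q(\zh)=\int_{\zh}B(\chi)\,\de\nuh(\chi)$. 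The paper's converse avoids densities altogether: it writes $K(x,t)=A\pi_{t-x}A^*$ via Propositions~\ref{ale} and~\ref{erni}, diagonalizes the continuous unitary representation $\pi$ by spectral multiplicity theory as multiplication operators on $\bigoplus_n L^2(\xh_n,\nuh_n;\kk_n)$, and then uses isometries $J_n:\kk_n\to\kk$ --- available exactly because $\kk$ is infinite dimensional --- to transport everything into a single $\LdueXE$, finally replacing $A$ by $AV^*$. Your density argument is, in substance, what the paper does in the complementary finite-dimensional case through Proposition~\ref{caldo}, where a basis of $\kk$ does allow the scalar densities $b_{y_i,y_j}$ to be assembled into a bona fide $B(\chi)$.
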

\begin{proof}
If $\nuh$ is a measure on $\xh$ and $A:\LdueXE\to \kk$ is a bounded
operator, then 
\[
\scal{Q(\zh) y}{y^\prime} = \int_{\zh} \scal{\lft A^\ast y \rgt (\chi)}{\lft
  A^\ast y^\prime \rgt (\chi)} \de \nuh (\chi) \qquad \forall
\zh\in\bo{\xh},\ y , y^\prime \in \kk 
\]
defines a positive operator valued measure $Q: \bo{\xh}\frecc
\BK$, since $Q(\zh)=AP(\zh)A^\ast$ where $P(\zh)$ is the
multiplication by the characteristic function of $\zh$.
The kernel $K$ given in (\ref{ff}) is then the translation
invariant Mercer kernel associated to $Q$ by (\ref{bochint}). 
To prove\eqn{fff}, set
\[
\gamma_x : \kk \frecc \LdueXE \qquad \lft \gamma_x y \rgt (\chi) =
\chi (x) (A^\ast y)(\chi)  ,
\] 
so that $K(x,t) = \gamma_x^\ast \gamma_t$ and
\begin{align*}
\scal{\gamma_x^\ast \fh}{y} & =  \scal{\fh}{\gamma_x y}_2 = 
\int_{\xh} \scal{\fh (\chi)}{\chi (x) (A^\ast y)(\chi)}\de\nuh(\chi) \\
& =  \int_{\xh} \overline{\chi (x)} \scal{\fh (\chi)}{(A^\ast
  y)(\chi)}\de\nuh(\chi) = \scal{A\fh^x}{y}
\end{align*}
for all $\fh\in\LdueXE$.\\
Conversely, assume that $K$ is a translation invariant Mercer kernel. 
We first consider the case that $\kk$ is
infinite-dimensional. Propositions~\ref{ale} 
and~\ref{erni} show that $K$ is of the form $K(x,t)= A\pi_{t-x} A^*$
for some unitary continuous representati§<on $\pi$ acting on a
separable Hilbert
space $\hh$ and a bounded operator $A:\hh\to\kk$. \\
A basic result of commutative harmonic analysis (see \cite{fol95})
ensures that, for each $n\in \nat_\ast:=\nat\cup\{\infty\}$, there exist a
  complex separable Hilbert space $\kk_n$ of dimension $n$, and a
  measurable subset $\hat{X}_n$ of $\xh$ endowed with a positive measure
  $\nuh_n$ such that the $\hat{X}_n$ are disjoint and cover
  $\hat{X}$. Without loss of generality, we can assume that
  $\nuh_n(\xh_n)\leq 2^{-n}$ and $\nuh_{\infty}(\xh_\infty)\leq
  1$. Moreover there exists a unitary operator 
$U:\, \hh  \to \bigoplus_n L^2(\hat{X}_n, \nuh_n, \kk_n) $ 
such that
\begin{align*}
(U\pi_x U^*\fh_n)(\chi) & =\chi(x)\fh_n(\chi)\qquad \fh_n \in  L^2(\hat{X}_n, \nuh_n, \kk_n)\,.
\end{align*}
For each $n\in \nat_\ast$, let $J_n:\E_n\to \E$ be a fixed isometry, which
always exists since $\E$ is infinite dimensional, and consider the
Hilbert space $\LdueXE$, where $\nuh=\sum_{n}\nuh_n$, which is a
bounded measure by assumption on $\nuh_n$.  Define the isometry 
$V:\, \hh  \to \LdueXE$
as  
\[(Vu)(\chi)= J_n (Uv)(\chi)\qquad \chi\in \xh_n.\]
A simple calculation shows that 
\[ \pi_x =V^*\lah_x V\]
where $(\lah_x\fh)(\chi)=\chi(x)\fh(\chi)$ is the diagonal
representation on $\LdueXE$. Now
\[ K(x,t)= A\pi_{t-x} A^*=AV^*\lah_{t-x}VA^*.\]
Redefining $A=AV^*$,\eqn{ff} is a consequence of
the explicit form of $\lah_x$. 

If $\kk$ is finite dimensional, let $(\nuh,B)$ be the pair associated to
$K$ as in Proposition~\ref{caldo} below. Eq.\eqn{ff} follows defining
$A:L^2(\xh,\nuh,\kk)\to \kk$ 
\[ \scal{A\fh}{y}=\int_{\xh}\scal{B(\chi)^{\frac12}\fh(\chi)}{y}\ \de\nuh(\chi).\]
\end{proof}
If $\kk=\complex^m$, $K(x,t)$ can be regarded as a $m\times m$-matrix and 
$A$ is uniquely defined by a family of functions
$\fh_1,\ldots,\fh_m\in\LdueXE$ through $A^\ast
e_i=\fh_i$. Hence,\eqn{ff} becomes
\begin{equation}
  \label{spagna}
 K(t-x)_{ij}  = \int_{\xh} \chi(t-x) \scal{f_j(\chi)}{f_i(\chi)}\,
\de\nuh(\chi)\qquad i,j=1,\ldots,m.
\end{equation}
As an application,  we give the following  example that generalizes
the one given in \cite{camipoyi08}. 
\begin{example}
Let $X=\runo^d$, regarded as vector abelian group, and
$\kk=\complex^m$. The dual group is isomorphic to $\runo^d$ by means
of $\chi_p(x)=e^{i2\pi x\cdot p}$. Let $\nuh=\de p$ be the Lebesgue
measure on $\real^d$ and
\[\fh_i(p)= \frac{1}{(2\pi)^{d/4}}\,e^{-\sigma_i^2\frac{| p |^2}{2}}\
v_i\qquad v_i\in\kk,\ 
  \sigma_i>0,\] 
then the translation invariant Mercer kernel given by$\eqn{spagna}$
is  
\[\begin{split}
K(t-x)_{ij} & = \int_{\runo^d} e^{i2\pi (t-x)\cdot p} \scal{f_j(p)}{f_i(p)}\,
\de p \\
& = \frac{1}{(\sigma_i^2+\sigma_j^2)^{d/2}}\,
e^{-2\pi^2\frac{|x-t|^2}{\sigma_i^2+\sigma_j^2}} \scal{v_j}{v_i}. 
\end{split}\]
The example in $\cite{camipoyi08}$ corresponds to the choice $v_i=v_j$ and
$\sigma_i=\sigma_j$ for any $i,j=1,\ldots,m$.
\end{example}
Theorems~\ref{bochteo} and~\ref{abelianern} give two
different characterizations of a translation invariant kernel $K$, but
the POVM $Q$ defining $K$ through\eqn{bochint} is always unique,
whereas there are many pairs $(\nuh,A)$ defining the same $K$
by\eqn{ff}. These two descriptions are related observing that, given a
pair $(\nuh,A)$, the scalar bounded measure $Q_{y,y^\prime}$ has
density $ \scal{(A^{*}y)(\chi)}{(A^{*}y^\prime)(\chi)}$ with respect
to $\nuh$ for any $y,y^\prime\in\kk$. On the other hand, given the
POVM $Q$, let $\nuh_Q$ be the bounded positive measure defined by
\beeq{nuq}{
\nuh_Q(\zh) = \sum_n 2^{-n} \no{y_n}^{-2n} \scal{Q(\zh) y_n}{y_n}
\qquad \forall \zh \in\bo{\xh} }
where $\{y_n\}_{n\in \nat}$ is a dense sequence in
$\kk$. Clearly, given
$\zh\in\bo{\xh}$, $\nuh_Q(\zh) = 0$ if and only if $Q(\zh) = 0$,  and
$\nuh_Q$ is uniquely defined by $Q$ up to an  
equivalence. Moreover, by Neumark dilation theorem, see\eqn{Naimark},
there exists an operator $A_Q: 
L^2(\xh,\nuh_Q;\kk)\to\kk$ such that the pair $(\nuh_Q,A_Q)$ gives the
kernel $K$ associated with $Q$.

We notice that in general it is not true  that the POVM $Q$ has an
operator valued density. We recall that $Q$ {\em has operator density}
if there exists a map $B: \xh \frecc \elle{\kk}$  and a positive
measure $\nuh$ such that
$\scal{B(\cdot) y}{y^\prime} \in L^1 (\xh,\nuh)$ for all
$y,y^\prime\in\kk$ and 
\beeq{insalata}
{\int_{\zh} \scal{B(\chi) y}{y^\prime} \de \nuh (\chi) =
Q_{y,y^\prime}(\zh) \qquad \forall \zh\in\bo{\xh}. } 
The following proposition will characterize the kernels having a POVM
with an operator density. To prove the result, we need the following
technical lemma.
\begin{lemma}\label{Q(Z)inL(Y)}
Let $\nuh$ be a positive measure on $\xh$ and  $B: \xh \frecc
\elle{\kk}$ such that 
$\scal{B(\cdot) y}{y^\prime} \in L^1 (\xh,\nuh)$ for all
$y,y^\prime\in\kk$. Then, the sesquilinear form
\begin{equation}\label{formaQ}
\kk\times\kk\to L^1 (\xh,\nuh),\qquad (y,y^\prime)\mapsto
\scal{B(\cdot) y}{y^\prime} 
\end{equation}
is continuous.
\end{lemma}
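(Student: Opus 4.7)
The statement is a standard ``separate continuity implies joint continuity'' result for sesquilinear forms with values in a Banach space, and the plan is to obtain it via two applications of the closed graph theorem followed by an appeal to the uniform boundedness principle.

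First, fix $y\in\kk$ and consider the map
\[\Phi_y:\kk\longrightarrow L^1(\xh,\nuh),\qquad \Phi_y(y^\prime)=\scal{B(\cdot)y}{y^\prime}.\]
This map is antilinear by construction and well-defined by hypothesis. I would show that it has closed graph: if $y^\prime_n\to y^\prime$ in $\kk$ and $\Phi_y(y^\prime_n)\to g$ in $L^1(\xh,\nuh)$, then, on the one hand, for every $\chi\in\xh$ one has $\scal{B(\chi)y}{y^\prime_n}\to \scal{B(\chi)y}{y^\prime}$ by continuity of the scalar product; on the other hand, a suitable subsequence of $\Phi_y(y^\prime_n)$ converges $\nuh$-almost everywhere to $g$. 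Matching the two limits $\nuh$-a.e., we conclude $g=\Phi_y(y^\prime)$, whence the graph is closed. By the closed graph theorem, $\Phi_y$ is bounded. An entirely symmetric argument shows that, for each fixed $y^\prime\in\kk$, the map $y\mapsto\scal{B(\cdot)y}{y^\prime}$ is a bounded linear operator from $\kk$ to $L^1(\xh,\nuh)$.

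Now I would apply the Banach--Steinhaus theorem to the family $\{\Phi_y\}_{\no{y}\le 1}$ of bounded operators from $\kk$ into $L^1(\xh,\nuh)$. By the second step above, for each fixed $y^\prime\in\kk$, we have
\[\sup_{\no{y}\le 1}\no{\Phi_y(y^\prime)}_{L^1}=\sup_{\no{y}\le 1}\no{\scal{B(\cdot)y}{y^\prime}}_{L^1}\le C_{y^\prime}<\infty,\]
so the family is pointwise bounded. Banach--Steinhaus then yields a constant $C$ with $\no{\Phi_y}\le C\no{y}$ for all $y$, i.e.
\[\no{\scal{B(\cdot)y}{y^\prime}}_{L^1}\le C\no{y}\no{y^\prime}\qquad\forall y,y^\prime\in\kk,\]
which is the desired joint continuity.

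The only subtle point is the closed graph verification, where one must pass from pointwise limits of the scalar functions $\chi\mapsto\scal{B(\chi)y}{y^\prime_n}$ to the $L^1$ limit $g$; this is handled in routine fashion by extracting an a.e.-convergent subsequence from the $L^1$-convergent one. Everything else is a standard two-step bootstrap from separate to joint continuity of a sesquilinear map into a Banach space.
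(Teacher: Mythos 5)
Your proof is correct and follows essentially the same route as the paper: separate continuity of the form in each variable via the closed graph theorem, then a bootstrap to joint continuity. The only cosmetic difference is that the paper invokes the closed graph theorem a second time for the final step (applied to the map $y\mapsto\Phi_y\in\elle{\kk;L^1(\xh,\nuh)}$), whereas you use Banach--Steinhaus on the family $\{\Phi_y\}_{\no{y}\le 1}$; both are standard and interchangeable here.
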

\begin{proof}
For fixed $y\in\kk$ [resp.~$y^\prime\in\kk$] the map $y^\prime\mapsto
\scal{B(\cdot) y}{y^\prime}$ [resp.~$y\mapsto \scal{B(\cdot)
  y}{y^\prime}$] is continuous from $\kk$ into $L^1 (\xh,\nuh)$ by the
closed graph theorem, {\em i.e.} the application defined in
(\ref{formaQ}) is separately continuous in $y$ and 
$y^\prime$.  So, the closed graph theorem again assures the joint continuity.
\end{proof}
\begin{proposition}\label{caldo}
Let $\nuh$ be a positive measure on $\xh$ and  $B: \xh \frecc \elle{\kk}$ such that
$\scal{B(\cdot) y}{y^\prime} \in L^1 (\xh,\nuh)$ for all
$y,y^\prime\in\kk$ and  $B(\chi) \geq 0$ for $\nuh$-almost all $\chi$.
Then
\beeq{bochnervec}{
K(x,t) =  \int_{\xh} \chi(t-x) B(\chi)\ \de\nuh(\chi) ,
}
is a translation invariant Mercer kernel, and the space 
$\hh_K$ is embedded in $\LdueXE$ by means of the
feature operator
\beeq{bochfea1}{
(W\fh)(x)=\int_{\xh} \overline{\chi(x)} B(\chi)^{\frac 12}\fh(\chi) \de\nuh(\chi) ,
}
where both the above integrals converge in the weak sense.\\
If $\kk$ is finite dimensional or $X$ is compact, any translation
invariant kernel is of the above form for some pair $(\nuh,B)$.\\
If $\kk=\complex$, one can always assume that  $B=1$ and $\nuh$ is a
bounded positive measure.
\end{proposition}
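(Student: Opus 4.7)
The plan is to derive the integral representation~\eqn{bochnervec} and the feature formula~\eqn{bochfea1} directly from Theorem~\ref{abelianern} by exhibiting a suitable operator $A$, and then to establish the two converse statements by combining Theorem~\ref{bochteo} with a Radon--Nikodym argument (finite-dimensional $\kk$) or by exploiting the discreteness of $\xh$ (compact $X$).

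For the forward direction, define $A^\ast:\kk\to\LdueXE$ by $(A^\ast y)(\chi)=B(\chi)^{1/2}y$. Lemma~\ref{Q(Z)inL(Y)} supplies a constant $C$ with $\|\scal{B(\cdot)y}{y^\prime}\|_{L^1(\nuh)}\leq C\no{y}\,\no{y^\prime}$, so
\[
\no{A^\ast y}^2_2=\int_{\xh}\scal{B(\chi)y}{y}\de\nuh(\chi)\leq C\no{y}^2,
\]
and $A^\ast$ (hence $A$) is bounded. Substituting this $A$ into~\eqn{ff} reproduces~\eqn{bochnervec} since $\scal{(A^\ast y)(\chi)}{(A^\ast y^\prime)(\chi)}=\scal{B(\chi)y}{y^\prime}$. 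Computing $A$ as the adjoint yields $\scal{Ag}{y}=\int\scal{B(\chi)^{1/2}g(\chi)}{y}\de\nuh(\chi)$ for $g\in\LdueXE$, and plugging $g(\chi)=\overline{\chi(x)}\fh(\chi)$ into~\eqn{fff} gives~\eqn{bochfea1}.

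For the converse with $\kk=\complex^m$, let $Q$ be the POVM associated to $K$ by Theorem~\ref{bochteo} and fix an orthonormal basis $\{e_i\}_{i=1}^m$. Take $\nuh=\sum_{i=1}^m Q_{e_i,e_i}$, a bounded positive scalar measure. Each complex measure $Q_{e_i,e_j}$ is absolutely continuous with respect to $\nuh$ (by the Cauchy--Schwarz bound $|Q_{e_i,e_j}(\zh)|\leq Q_{e_i,e_i}(\zh)^{1/2}Q_{e_j,e_j}(\zh)^{1/2}$), so Radon--Nikodym produces densities $b_{ij}\in L^1(\xh,\nuh)$ which I assemble into $B(\chi)=(b_{ij}(\chi))$. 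To check $B(\chi)\geq 0$ $\nuh$-a.e., fix a countable dense set $D\subset\complex^m$: for each $c\in D$ the positive measure $\zh\mapsto Q_{\sum_i c_i e_i,\sum_i c_i e_i}(\zh)$ has nonnegative density $\chi\mapsto\sum_{i,j}c_i\overline{c_j}b_{ij}(\chi)$ off some $\nuh$-null set $N_c$, and outside $N=\bigcup_{c\in D}N_c$ continuity in $c$ extends the inequality to all of $\complex^m$. When $X$ is compact, $\xh$ is countable and discrete, so take $\nuh$ to be counting measure and $B(\chi)=Q(\{\chi\})$; integrability follows from Cauchy--Schwarz applied to the square-summable sequences $\scal{Q(\{\chi\})y}{y}$ and $\scal{Q(\{\chi\})y^\prime}{y^\prime}$, both bounded by $\no{Q(\xh)}\no{y}^2$ and $\no{Q(\xh)}\no{y^\prime}^2$ respectively.

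Finally, for $\kk=\complex$ the finite-dimensional construction produces a nonnegative scalar density $b$ with respect to a positive measure $\nuh$; replacing $\nuh$ by the bounded measure $b\,\de\nuh$ (of total mass $\int b\,\de\nuh=Q(\xh)<\infty$) normalizes $B$ to the constant~$1$. The main obstacle will be the pointwise positivity $B(\chi)\geq 0$ in the finite-dimensional converse, since the positivity of $Q$ is only a \emph{setwise} condition on $\bo{\xh}$ and a countable-dense-set argument is required to promote it to a genuine pointwise property of the density.
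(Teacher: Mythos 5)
Your proof is correct, and in substance it follows the same path as the paper's, but with two reroutings worth noting. For the forward direction the paper constructs the POVM $Q(\zh)y\mapsto\int_{\zh}\scal{B(\chi)y}{y^\prime}\de\nuh(\chi)$ directly and invokes Theorem~\ref{bochteo}, then builds the feature map $\gamma_xy=\chi(x)B(\chi)^{1/2}y$ by hand; you instead specialize Theorem~\ref{abelianern} to the operator $A$ with $(A^\ast y)(\chi)=B(\chi)^{1/2}y$. This is legitimate and non-circular — you only use the \emph{forward} implication of Theorem~\ref{abelianern}, whose proof in the paper is self-contained, whereas it is the \emph{converse} of that theorem that the paper deduces from Proposition~\ref{caldo} — but note that the underlying computation is identical, since the paper's proof of the forward part of Theorem~\ref{abelianern} is itself the $Q(\zh)=AP(\zh)A^\ast$ construction fed into Theorem~\ref{bochteo}. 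In the finite-dimensional converse you take the trace measure $\nuh=\sum_{i=1}^m Q_{e_i,e_i}$ rather than the measure $\nuh_Q$ of\eqn{nuq}; for finite $m$ this is equivalent (both vanish exactly on the sets where $Q$ vanishes) and slightly cleaner. Your countable-dense-set argument for promoting the setwise positivity of $Q$ to the $\nuh$-a.e.\ pointwise positivity of the matrix density $B(\chi)$ is exactly the detail the paper elides with ``clearly satisfies the required properties,'' and it is the right way to fill that gap; likewise your reduction of the scalar case by absorbing the density $b$ into the measure makes explicit what the paper dismisses as ``clear.'' The only blemish is cosmetic: in the compact case the sequences $\chi\mapsto\scal{Q(\set{\chi})y}{y}$ are summable (with sum $\scal{Q(\xh)y}{y}$), and it is their square roots that are square-summable, which is what Cauchy--Schwarz actually needs.
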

\begin{proof}
Let $\nuh$ and $B$ as in the assumptions. Given a Borel subset $\zh$
of $\xh$ define $Q(\zh)$ as the unique bounded operator satisfying 
\[ \scal{Q(\zh)y}{y^\prime}=\int_{\zh} \scal{B(\chi)y}{y^\prime}\de\nuh(\chi).\] 
The fact that $Q(\zh)$ is a bounded operator follows from
Lemma~\ref{Q(Z)inL(Y)} and from the continuity of the map $L^1 (\xh,\nuh)
\ni \phi \mapsto \int_{\zh}\phi(\chi)\de\nuh(\chi) \in
\cuno$. Clearly, $Q(\zh)$ is a positive operator and monotone 
convergence theorem implies that $\zh\mapsto Q(\zh)$ is a POVM on
$\xh$. By construction ${K(x,t)=\int_{\xh} \chi(t-x) \de
Q(\chi)}$, so  $K$ is a translation invariant Mercer kernel by
Theorem~\ref{bochteo}.   Setting
\[
\gamma_x : \kk \frecc \LdueXE \qquad \lft \gamma_x y \rgt (\chi) =
\chi (x) B(\chi)^{1/2} y , 
\]
we see that $K(x,t) = \gamma_x^\ast \gamma_t$ and
\begin{align*}
\scal{\gamma_x^\ast \fh}{y} & =  \scal{\fh}{\gamma_x y}_2 = 
\int_{\xh} \scal{\fh (\chi)}{\chi (x) B(\chi)^{1/2} y}\de\nuh(\chi) \\
& =  \int_{\xh} \overline{\chi (x)} \scal{B(\chi)^{1/2} \fh
  (\chi)}{y}\de\nuh(\chi) 
\end{align*}
for all $\fh\in\LdueXE$, from which (\ref{bochfea1}) follows.\\
Assume now that $\kk$ is finite dimensional or $X$ is compact and $K$
is a translation invariant Mercer kernel. Theorem~\ref{bochteo}
ensures that there
exists a POVM $Q$ on $\xh$ taking value in $\kk$ such that
${K(x,t)=\int_{\xh} \chi(t-x) \de 
Q(\chi)}$. If $X$ is compact, $\xh$ is discrete. Let $\nuh$ be the
counting measure and $B(\chi)=Q(\set{\chi})$ for all $\chi\in\xh$,
then $(\nuh,B)$ satisfies the required properties.\\
If $\kk$ is finite dimensional, choose $\nuh_Q$ as in\eqn{nuq}. 
It follows that for any $y,y^\prime\in\kk$, the complex measure
$Q_{y,y^\prime}$ has density $b_{y,y^\prime}\in L^1(\xh,\nuh_Q)$ with
respect to $\nuh_Q$. In particular, $b_{y,y}(\chi)\geq 0$ for $\nuh_Q$-almost all
$\chi\in\xh$. Let $y_1,\ldots,y_N$ be a basis of
$\kk$ and by linearity extend $b_{y_i,y_j}\in L^1(\xh,\nuh_Q)$ to a map 
$B:\xh\to\BK$, which clearly satisfies the required properties.\\
If $\kk=\complex$, the claim is clear.
\end{proof}
If $\kk=\complex$, Proposition~\ref{caldo} is already given
in~\cite{mixuzh06}.

We end by showing a sufficient condition ensuring that
  a translation invariant Mercer kernel is of the form given in Proposition~\ref{caldo}.
\begin{proposition}\label{prop. K in L1}
Let $K$ be a translation invariant Mercer kernel. Suppose that
$\scal{K_0 (\cdot) y}{y^\prime} \in L^1 (X,\de x)$ for all $y,y^\prime
\in \kk$. Let 
\beeq{B se K e' L1}{
\scal{B(\chi) y}{y^\prime} := \int_X \chi(x) \scal{K_0 (x) y}{y^\prime}
\de x \qquad \forall y,y^\prime \in \kk .
}  
Then
\begin{itemize}
\item[{\rm (i)}] $B(\chi)$ is a bounded nonnegative operator for all $\chi\in\xh$;
\item[{\rm (ii)}] $\scal{B(\cdot) y}{y^\prime} \in L^1 (X,\de x)$ for all $y,y^\prime \in\kk$;
\item[{\rm (iii)}] for all $x,t\in X$,
\beeq{altroB}{
K(x,t) = \int_{\xh}\chi(t-x) B(\chi) \de \chi ,
}
where the integral converges in the weak sense.
\end{itemize}
\end{proposition}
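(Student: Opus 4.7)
The plan is to reduce everything to a scalar statement by fixing a single vector $y\in\kk$ and studying the scalar function $\phi_y(x):=\scal{K_0(x)y}{y}$. By Proposition~\ref{ale} and Lemma~\ref{abelianco}, $K_0$ is of completely positive type and $K_0(\cdot)y$ is continuous, so $\phi_y$ is a continuous, bounded, scalar positive-definite function, and by hypothesis $\phi_y\in L^1(X,\de x)$. Three consequences of the classical scalar theorem on $L^1$ positive-definite functions on a locally compact abelian group (see \cite{fol95}) will then do all the real work: $\F\phi_y\geq 0$, $\F\phi_y\in L^1(\xh,\de\chi)$, and Fourier inversion $\phi_y(u)=\int_{\xh}\chi(u)\F\phi_y(\bar\chi)\,\de\chi$ holds pointwise.

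For the boundedness half of~(i), I would argue exactly as in Lemma~\ref{Q(Z)inL(Y)}: since $\scal{K_0(\cdot)y}{y'}\in L^1(X,\de x)$ for every $y,y'$, two applications of the closed graph theorem give a constant $C>0$ with $\|\scal{K_0(\cdot)y}{y'}\|_{L^1}\leq C\|y\|\|y'\|$; because $|\chi(x)|=1$, the definition~\eqn{B se K e' L1} yields $|\scal{B(\chi)y}{y'}|\leq C\|y\|\|y'\|$ uniformly in $\chi$, so $B(\chi)\in\elle{\kk}$ with $\|B(\chi)\|\leq C$. For nonnegativity, note that $\scal{B(\chi)y}{y}=\F\phi_y(\bar\chi)$ with the sign convention fixed in Section~\ref{sec_abeliano}; the scalar theorem above gives $\F\phi_y(\bar\chi)\geq 0$ pointwise, hence $B(\chi)\geq 0$ for every $\chi$.

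For~(ii), the same scalar theorem gives $\F\phi_y\in L^1(\xh,\de\chi)$, so $\scal{B(\cdot)y}{y}\in L^1(\xh,\de\chi)$; the off-diagonal matrix elements $\scal{B(\cdot)y}{y'}$ then lie in $L^1(\xh,\de\chi)$ by the polarization identity. For~(iii), I would apply the scalar Fourier inversion formula to $\phi_y$:
\[
\scal{K_0(u)y}{y}=\phi_y(u)=\int_{\xh}\chi(u)\,\F\phi_y(\bar\chi)\,\de\chi=\int_{\xh}\chi(u)\scal{B(\chi)y}{y}\,\de\chi,
\]
and again polarize in $(y,y')$ to get the weak identity $\scal{K(x,t)y}{y'}=\int_{\xh}\chi(t-x)\scal{B(\chi)y}{y'}\,\de\chi$, which is~\eqn{altroB}.

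The main obstacle is invoking the correct scalar result: one needs simultaneously the positivity and the $L^1$-integrability of $\F\phi_y$ together with the pointwise Fourier inversion formula. Any one of these alone (positivity from Bochner, or $C_0$ from Riemann--Lebesgue) is easy, but getting all three together for a merely $L^1$ continuous positive-definite function is the content of the Plancherel--Godement theorem on LCA groups, and the sign conventions must be tracked carefully to identify the density of the POVM $Q_{y,y}$ of Theorem~\ref{bochteo} with $\scal{B(\cdot)y}{y}\,\de\chi$.
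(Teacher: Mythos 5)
Your proof is correct and follows essentially the same route as the paper's: boundedness of $B(\chi)$ via the closed-graph argument of Lemma~\ref{Q(Z)inL(Y)} applied to $K_0$ together with the continuity of $\phi\mapsto\F{\phi}(\chi)$ on $L^1(X,\de x)$, and then the Bochner/Fourier-inversion theorem for continuous positive-definite functions in $L^1(X,\de x)$ applied to $x\mapsto\scal{K_0(x)y}{y}$, with polarization handling the off-diagonal matrix elements. The only blemish is a conjugation slip in your displayed inversion formula, which should read $\phi_y(u)=\int_{\xh}\overline{\chi(u)}\,\F{\phi_y}(\bar\chi)\,\de\chi$ (consistent with the $\chi(t-x)$ appearing in your correctly stated final identity); this does not affect the substance of the argument.
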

\begin{proof}
The operator $B(\chi)$ defined in (\ref{B se K e' L1}) is 
bounded as a consequence of Lemma~\ref{Q(Z)inL(Y)} (applied to $K_0$)
and of the continuity of the map $L^1 (X,\de x) 
\ni \phi \mapsto \F{\phi}(\chi) \in \cuno$. \\
Since $\scal{K_0 (\cdot) y}{y}$ is a function of positive type, by
Fourier inversion theorem $\scal{B(\cdot) y}{y} \in L^1 (\xh,\de
\chi)$, and 
\[
\scal{K_0 (x) y}{y} = \int_{\xh} \overline{\chi(x)} \scal{B(\chi) y}{y} \de \chi ,
\]
which is (\ref{altroB}).
\end{proof}

\subsection{Universality}
In this section we study the universality problem for
translation invariant kernels on an abelian group in terms of the
characterization given by Theorem~\ref{bochteo} and
Proposition~\ref{caldo}. The assumptions and
notations are as in Section~\ref{sec_abeliano}.  
To state the following result, we recall that
the support of a POVM $Q$ is the complement of the largest
open subset $U$ such that $Q(U)=0$. 
\begin{proposition}\label{necessita}
Let $K$ be a translation invariant Mercer kernel, and $Q$ its
associated positive operator valued measure. 
If the RKHS  $\hh_K$  is dense in $\LduemuXK$ for any probability
measure $\mu$, then $\operatorname{supp}(Q)= \xh$. 
\end{proposition}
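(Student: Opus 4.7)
Argue by contrapositive: suppose there exists a nonempty open $U \subseteq \xh$ with $Q(U) = 0$. The plan is to construct a probability measure $\mu$ on $X$ and a nonzero $h \in \LduemuXK$ orthogonal to every element of $\hh_K$, which contradicts the density hypothesis. The guiding intuition is the scalar-case fact that elements of $\hh_K$ have ``Fourier content'' supported in $\operatorname{supp}(Q)$, so a $\kk$-valued measure on $X$ whose Fourier transform lives in $U$ ought to annihilate $\hh_K$.

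First I would build a scalar test function $\psi$ on $X$ with $\hat\psi$ supported in $U$. Pick $\chi_0 \in U$ and a relatively compact, open, symmetric neighborhood $W$ of the identity of $\xh$ small enough that $\chi_0 + \overline W - \overline W \subseteq U$; set $\hat g_1 = \mathbf{1}_{\chi_0 + W}$ and $\hat g_2 = \mathbf{1}_{-W}$. Both lie in $L^1(\xh,\de\chi) \cap L^2(\xh,\de\chi)$, so their inverse Fourier transforms $g_1, g_2$ belong to $\Co(X) \cap L^2(X,\de x)$. By Cauchy--Schwarz and Plancherel, $\psi := g_1 g_2 \in L^1(X,\de x) \cap \Co(X)$; its Fourier transform $\hat\psi = \hat g_1 \ast \hat g_2$ is continuous, supported in $\chi_0 + \overline W - \overline W \subseteq U$, and $\hat\psi(\chi_0)$ equals the Haar measure of $W$, hence is strictly positive. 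Fix a unit vector $y_0 \in \kk$ and define the nonzero finite $\kk$-valued Borel measure $\nu$ on $X$ by $\de\nu(x) = \overline{\psi(x)}\,y_0\,\de x$, whose total variation is $|\nu|(E) = \int_E |\psi|\,\de x$.

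The key identity is $\int_X \scal{f(x)}{\de\nu(x)} = 0$ for every $f \in \hh_K$. By Theorem~\ref{abelianern}, choose a positive measure $\nuh$ on $\xh$ and a bounded operator $A\colon L^2(\xh,\nuh;\kk) \to \kk$ representing $K$, and write $f = W\hat f$ with $\hat f \in L^2(\xh,\nuh;\kk)$. Then
\[
\scal{f(x)}{y_0} = \scal{A\hat f^x}{y_0} = \int_\xh \overline{\chi(x)}\,\scal{\hat f(\chi)}{(A^\ast y_0)(\chi)}\,\de\nuh(\chi),
\]
and a Fubini interchange (valid since $\psi$ is bounded and in $L^1(X,\de x)$ while $\hat f$ and $A^\ast y_0$ lie in $L^2(\xh,\nuh;\kk)$) yields
\[
\int_X \scal{f(x)}{y_0}\,\psi(x)\,\de x = \int_\xh \scal{\hat f(\chi)}{(A^\ast y_0)(\chi)}\,\hat\psi(\chi)\,\de\nuh(\chi).
\]
The proof of Theorem~\ref{abelianern} identifies $\scal{Q(U)y_0}{y_0} = \int_U \|(A^\ast y_0)(\chi)\|^2\,\de\nuh(\chi)$, so the hypothesis $Q(U) = 0$ forces $(A^\ast y_0)(\chi) = 0$ for $\nuh$-almost every $\chi \in U$; since $\operatorname{supp}\hat\psi \subseteq U$, the right-hand side vanishes, i.e.\ $\int_X \scal{f}{\de\nu} = 0$. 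Finally set $\mu := |\nu|/|\nu|(X)$ and $h := \de\nu/\de\mu$: then $h \in L^\infty(X,\mu;\kk) \subseteq \LduemuXK$ with $\|h(x)\| = |\nu|(X) > 0$ a.e., and $\scal{f}{h}_2 = \int_X \scal{f}{\de\nu} = 0$ for every $f \in \hh_K$, so $\hh_K$ is not dense in $\LduemuXK$, the desired contradiction.

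The main obstacle is bridging the POVM formulation of the hypothesis with the $L^2$-feature description of $\hh_K$ given by Theorem~\ref{abelianern}: one must use $\scal{Q(\zh)y}{y} = \int_\zh \|(A^\ast y)(\chi)\|^2\,\de\nuh(\chi)$ to convert ``$Q(U) = 0$'' into ``$A^\ast y_0$ vanishes $\nuh$-a.e.\ on $U$''. Apart from this translation, the argument is a standard Parseval/Fubini computation combined with a vector Radon--Nikodym step producing the concrete probability measure $\mu$ and the orthogonal function $h$ out of the vector measure $\nu$.
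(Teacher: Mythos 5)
Your proof is correct, and the underlying construction is in fact the very one the paper uses: your measure $\mu\propto|\psi|\,\de x$ coincides with the measure $|\ff^{-1}(1_W)|^2\,\de x$ of the footnote in the paper's proof (note that $g_1=\chi_0\cdot\ff^{-1}(1_W)$ and $g_2=\overline{\ff^{-1}(1_W)}$, so $\psi=\chi_0|\ff^{-1}(1_W)|^2$), and consequently your annihilator $h=\no{\psi}_1\,\overline{\psi}/|\psi|\,y_0=\no{\psi}_1\,\overline{\chi_0(\cdot)}\,y_0$ is, $\mu$-a.e., a multiple of the paper's test function $\varphi=\overline{\chi_0(\cdot)}\,y$. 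Where you genuinely diverge is in the verification. The paper stays entirely with the POVM formula\eqn{bochint} and the injectivity criterion of Proposition~\ref{densita-iniettivita}: it localizes $\F{\mu}$ in $\chi_0 U^{-1}$ and computes $\scal{L_\mu\varphi}{\varphi}=\int_{\xh}\left|\F{\mu}(\chi_0\chi^{-1})\right|^2\de Q_{y,y}(\chi)=0$, so that $L_\mu$ is not injective and density fails. You instead exhibit the orthogonality $\scal{f}{h}_2=0$ directly, which obliges you to route through the $(\nuh,A)$ representation of Theorem~\ref{abelianern}, the bridging identity $\scal{Q(\zh)y_0}{y_0}=\int_{\zh}\no{(A^\ast y_0)(\chi)}^2\de\nuh(\chi)$ (legitimate by the uniqueness of $Q$ in Theorem~\ref{bochteo}), and a Fubini interchange, and to place the Fourier localization on $\hat\psi$ inside $U$ rather than on $\F{\mu}$ near the identity. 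Both are sound; the paper's version is shorter because the duality between density and injectivity of $L_\mu$ is already packaged in Proposition~\ref{densita-iniettivita}, whereas yours is self-contained at the level of explicit annihilating functionals and makes the role of the $L^2(\xh,\nuh;\kk)$ feature space visible.
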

\begin{proof}
Suppose there is an open set $U\subset\xh$ such that $Q (U) =
0$. Let $\chi_0 \in U$, so that $\chi_0 U^{-1}$ is a
neighborhood of the identity element of $\xh$. Let $\mu$ be a
probability measure\footnote{For example, if $V$ is a compact symmetric
  neighborood of the identity of $\xh$ such that $V^2 \subset \chi_0
  U^{-1}$, let $h = 
  1_V \ast 1_V$, so that (up to a constant) the measure $\de \mu (x) =
  \ff^{-1} (h) (x) \de x = \left| \ff^{-1} (1_V) (x) \right|^2 \de x$
  has the required property.}  on $X$ such that ${\rm supp}\,\F{\mu} \subset
\chi_0 U^{-1}$ and  set $\varphi (x) = \overline{\chi_0
  (x)}y$ with $y\in\kk\setminus \{ 0 \}$. Then~(\ref{bochint}) gives 
\begin{eqnarray*}
\scal{L_\mu \varphi}{\varphi} & = &
\int_{X}\int_{X}\int_{\xh} \chi(t-x) \chi_0(x)   \overline{\chi_0(t)}  \de Q_{y,y}
(\chi) \de \mu (x) \de \mu (t) \\ 
& = & \int_{\xh} \left| \F{\mu}(\chi_0\chi^{-1})\right|^2 \de Q_{y,y}(\chi)=0.
\end{eqnarray*}
This shows that $L_\mu$ is not injective, {\em 
  i.e.}~$K$ is not universal. 
\end{proof}
We now characterize the universality of the kernels defined in terms
of the pair $(\nuh,B)$ by means of\eqn{bochnervec}.
\begin{proposition}\label{suff2}
Given a positive measure $\nuh$ on $\xh$ and  $B: \xh \frecc
\elle{\kk}$ such that 
$\scal{B(\cdot) y}{y^\prime} \in L^1 (\xh,\nuh)$ for all
$y,y^\prime\in\kk$ and  $B(\chi) \geq 0$ for $\nuh$-almost all $\chi$,
let $K$ be the translation invariant Mercer kernel given by$\eqn{bochnervec}$.
\begin{itemize}
\item[{\rm (i)}] If $\hh_K$  is dense in $\LduemuXK$ for any
probability measure $\mu$, then both $\supp{\nuh}=\xh$ and $\supp{B}=\xh$ .
\item[{\rm (ii)}] 
If $\supp{\nuh}=\xh$ and $B(\chi)$ is injective for $\nuh$-almost all
$\chi\in\xh$, then $\hh_K$  is dense in $\LduemuXK$ for any
probability measure $\mu$. \\
In the case $X$ is compact also the converse holds true.
\item[{\rm (iii)}] 
If $\kk=\complex$ and $B=1$, $\hh_K$  is dense in $\LduemuXK$ for any
probability measure $\mu$ if and only if $\supp \nuh= \hat{X}$.
\end{itemize}
\end{proposition}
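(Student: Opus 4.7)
The plan rests on Proposition~\ref{densita-iniettivita}: density of $\hh_K$ in $\LduemuXK$ is equivalent to injectivity of the integral operator $L_\mu$, and since $L_\mu$ is positive, $L_\mu\varphi=0$ is in turn equivalent to $\scal{L_\mu\varphi}{\varphi}_2=0$. Expanding via~(\ref{bochnervec}), using $\chi(t-x)=\chi(t)\overline{\chi(x)}$ and extracting the factor $B(\chi)^{1/2}$, a Fubini rearrangement should yield
\[
\scal{L_\mu \varphi}{\varphi}_2 \;=\; \int_{\xh}\bigl\|B(\chi)^{1/2}\Psi(\chi)\bigr\|^2\,\de\nuh(\chi),\qquad \Psi(\chi):=\int_X \chi(t)\varphi(t)\,\de\mu(t).
\]
Note that $\Psi$ is continuous on $\xh$ since, up to the involution $\chi\mapsto\chi^{-1}$, it is the Fourier transform of the finite $\kk$-valued measure $\varphi\mu$.

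For the positive direction of part~(ii), I would assume $\supp{\nuh}=\xh$ and $B(\chi)$ injective $\nuh$-a.e. If $L_\mu\varphi=0$, the displayed identity forces $B(\chi)^{1/2}\Psi(\chi)=0$ $\nuh$-a.e., hence $\Psi(\chi)=0$ $\nuh$-a.e., hence $\Psi\equiv 0$ on $\xh$ by continuity and the support hypothesis. For every $y\in\kk$ the scalar function $\scal{\Psi(\cdot)}{y}$ is then the Fourier transform of the complex measure $\scal{\varphi(\cdot)}{y}\de\mu$, so scalar Fourier injectivity yields $\scal{\varphi(t)}{y}=0$ for $\mu$-a.e.\ $t$; running $y$ through a countable dense subset of $\kk$ and invoking separability give $\varphi=0$ in $\LduemuXK$.

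Part~(i) follows from Proposition~\ref{necessita}: the density hypothesis forces $\supp{Q}=\xh$, and since $\scal{Q(\zh)y}{y'}=\int_{\zh}\scal{B(\chi)y}{y'}\,\de\nuh(\chi)$, any open set $U$ with $\nuh(U)=0$, or on which $B$ vanishes $\nuh$-a.e., would give $Q(U)=0$ and contradict $\supp{Q}=\xh$. For the converse in part~(ii) when $X$ is compact, $\xh$ is discrete, so $\supp{\nuh}=\xh$ forces $\nuh(\{\chi\})>0$ for every $\chi$; if some $B(\chi_0)$ failed to be injective I would pick $y\ne 0$ with $B(\chi_0)y=0$, take $\mu$ the normalised Haar measure on $X$ and $\varphi(t)=\overline{\chi_0(t)}y$. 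Translation invariance of $\mu$ together with the orthogonality relation $\int_X \chi(s)\overline{\chi_0(s)}\,\de\mu(s)=\delta_{\chi,\chi_0}$ then collapse $(L_\mu\varphi)(x)$ to $\overline{\chi_0(x)}\,\nuh(\{\chi_0\})\,B(\chi_0)y=0$, contradicting injectivity of $L_\mu$ and hence the density hypothesis. Finally, (iii) is an immediate specialisation of (i) and~(ii): with $\kk=\complex$ and $B\equiv 1$, the conditions $\supp{B}=\xh$ and $\nuh$-a.e.\ injectivity of $B$ are automatic, leaving only $\supp{\nuh}=\xh$.

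The only delicate step is the Fubini rearrangement leading to the integral identity for $\scal{L_\mu\varphi}{\varphi}_2$: integrability of the triple integrand has to be justified, which follows from $\|\varphi\|_1\le\mu(X)^{1/2}\|\varphi\|_2$ together with $\scal{B(\cdot)y}{y'}\in L^1(\xh,\nuh)$ (Lemma~\ref{Q(Z)inL(Y)}). After that the argument is a clean chain of Fourier-injectivity and character-orthogonality manipulations.
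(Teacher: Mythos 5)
Your proposal is correct and follows essentially the same route as the paper: both reduce density to injectivity of $L_\mu$ via Proposition~\ref{densita-iniettivita}, derive the identity $\scal{L_\mu\varphi}{\varphi}=\int_{\xh}\scal{B(\chi)\F{\varphi\mu}(\chi^{-1})}{\F{\varphi\mu}(\chi^{-1})}\,\de\nuh(\chi)$, and conclude by positivity of $B$, continuity of the Fourier transform of $\varphi\mu$, and injectivity of the Fourier transform of measures, with the identical test pair $\mu=$ Haar measure, $\varphi=\overline{\chi_0}\,y$ for the compact converse. Your explicit attention to the Fubini justification and to passing from $\nuh$-a.e.\ vanishing of the continuous function $\F{\varphi\mu}$ to identical vanishing is a welcome refinement of details the paper leaves implicit, but it is not a different argument.
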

\begin{proof}
Item~(i) follows from Proposition~\ref{necessita} and\eqn{insalata}.\\
Let now $\mu$ be a probability measure on $X$. Using (\ref{bochnervec}), we have
\begin{eqnarray}
\scal{L_\mu \varphi}{\varphi} & = & \iiint \chi(t-x)\scal{B(\chi) \varphi (t)}{\varphi (x)} \de\nuh (\chi) \de\mu (x) \de\mu (t) \notag \\
& = & \int_{\xh} \scal{B(\chi) \F{\varphi \mu} (\chi^{-1})}{\F{\varphi \mu}
  (\chi^{-1})} \de\nuh (\chi) . \label{elle mu} 
\end{eqnarray}
\begin{itemize}
\item[{\rm (ii)}] If $B(\chi)$ is injective for almost all
$\chi\in\xh$ and $\supp{\nuh}=\xh$, then, by the above equation,
positivity of $B(\chi)$ and
the injectivity of Fourier transform, $L_\mu \varphi \neq 0$ if $\varphi
\neq 0$ in $\LduemuXK$. Therefore, $\hh_K$ is dense in $\LduemuXK$ for
any probability measure $\mu$.\\
Suppose $X$ is compact, so that $\xh$ is discrete. If $\hh_K$  is
dense in $\LduemuXK$ for any probability measure $\mu$, ${\rm supp}\,\nuh =
\xh$ by item~(i). If $\chi_0\in\xh$ and $y\in\ker B(\chi_0)$, choose $\de \mu (x)
= \de x$ and $\varphi (x) = \overline{\chi_0 (x)} y$, so that
$\F{\varphi \mu} (\chi) = \delta_{\chi , \chi_0^{-1}} \, y$. We thus
have 
\[
\scal{L_\mu \varphi}{\varphi} = \scal{B(\chi_0) y}{y} \nuh ({\chi_0}) = 0.
\]
Since $L_\mu$ is injective, this implies $\varphi = 0$, {\em i.e.}~$y=0$.
\item[{\rm (iii)}] Since $B=1$,
  the `if' part is clear from item {\rm (ii)}. The converse follows by
  item~(i). 
\end{itemize}
\end{proof}
By inspecting the proofs of Propositions \ref{necessita} and
\ref{suff2}, one can easily replace $\LduemuXK$ with any $\LpmuXK$,
$1\leq p<\infty$, in the statements. The same holds for Corollary
\ref{sufffourier} below. 
\begin{remark}\rm If the translation invariant kernel $K$ is $\Co$,
  then Propositions \ref{necessita} and 
  \ref{suff2} characterize universality of $K$. 
\end{remark}
\begin{remark}\rm If $X= \real^d$, $\kk=\cuno$, and $\supp{\nuh}$ is
  a subset of $\xh = \real^d$ such that every entire function on
  $\complex^d$ vanishing on it is identically zero, then $K$ is
  $\kappa$-universal (see \cite[Proposition 14]{mixuzh06}). 
This follows by (\ref{elle mu}), taking into account that, for
compactly supported $\mu$, the Fourier transform of $\varphi \mu$ can
be extended to an entire function defined on $\cuno^d$.  \\ 
In particular, if $d=1$ a sufficient condition for
$\kappa$-universality is that $\supp{ \nuh}$ has an accumulation
point. 
\end{remark}
Based on the above remark, we give another example of
compact-universal kernel, which is not universal, see also
Example~\ref{controesempio}.
\begin{example}
\label{sync}
Let $K:\runo\times\runo\to\complex$ be the $\Co$-kernel 
\begin{eqnarray*}
K\lft x , t \rgt = \int_{-1}^{1}e^{2\pi i (t-x)p} \de p
= \frac{\sin{2\pi(t-x)}}{\pi (t-x)}, 
\end{eqnarray*}
with $\nuh$ the restriction of the Lebesgue measure to
$\left[ -1,1 \right]$.  Since the support of
$\nuh$ admits an accumulation point, $K$ is compact-universal by the
last remark. On the other hand since $\supp{\nuh}$ is not the
whole $\real$, $K$ is not universal by Proposition~$\ref{suff2}$.
\end{example}

We now exhibit a particular case in which Proposition \ref{suff2} applies.
\begin{corollary}\label{sufffourier}
Let $K$ be a translation invariant Mercer kernel such that
$\scal{K_0(\cdot)y}{y^\prime}\in L^1 (X,\de x)$ for all
$y,y^\prime\in\kk$. Let $B: \xh \frecc \BK$ be as in $(\ref{B se K e'
  L1})$. If $B(\chi)$ is injective for $\de \chi$-almost all $\chi$,
then the reproducing kernel Hilbert space $\hh_K$ is dense in
$\LduemuXK$ for any probability measure $\mu$. 
\end{corollary}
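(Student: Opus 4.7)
The plan is to reduce this corollary directly to the combination of Proposition~\ref{prop. K in L1} and Proposition~\ref{suff2}(ii), since both pieces have essentially done all the work.

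First, I would invoke Proposition~\ref{prop. K in L1}. The $L^1$-hypothesis on $\scal{K_0(\cdot)y}{y^\prime}$ is exactly what that proposition requires, so item (i) tells us that the operator $B(\chi)$ defined by~\eqref{B se K e' L1} is a bounded nonnegative operator for every $\chi\in\xh$, item (ii) gives $\scal{B(\cdot)y}{y^\prime}\in L^1(\xh,\de\chi)$ for all $y,y^\prime\in\kk$, and item (iii) provides the integral representation
\[
K(x,t)=\int_{\xh}\chi(t-x)\,B(\chi)\,\de\chi\qquad\forall\,x,t\in X,
\]
where the Haar measure $\de\chi$ on $\xh$ plays the role of $\nuh$ in~\eqref{bochnervec}.

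Second, I would apply Proposition~\ref{suff2}(ii) to the pair $(\nuh,B)=(\de\chi,B)$. Its two hypotheses are both immediate: the support of the Haar measure $\de\chi$ is all of $\xh$, and by assumption $B(\chi)$ is injective for $\de\chi$-almost every $\chi\in\xh$. Proposition~\ref{suff2}(ii) therefore yields that $\hh_K$ is dense in $\LduemuXK$ for every probability measure $\mu$ on $X$, which is the desired conclusion.

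There is no real obstacle here; the only thing to verify is that the translation invariant Mercer kernel $K$ produced from $(\de\chi,B)$ via~\eqref{bochnervec} agrees with the original $K$, but this is precisely the content of~\eqref{altroB} in Proposition~\ref{prop. K in L1}. So the corollary is a clean consequence of the two preceding propositions, with the $L^1$-hypothesis used only to guarantee the existence of the operator density $B$ with respect to Haar measure.
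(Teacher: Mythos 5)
Your proposal is correct and follows essentially the same route as the paper: the paper's own (one-line) proof likewise rests on Proposition~\ref{prop. K in L1} to produce the pair $(\de\chi,B)$ and then invokes Proposition~\ref{suff2} together with the fact that the Haar measure has full support. You have merely made explicit the role of Proposition~\ref{prop. K in L1}, which the paper leaves implicit.
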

\begin{proof} 
Since the support of the Haar measure $\de\chi$ is $\xh$, the claim is
then a consequence of Proposition~\ref{suff2}. 
\end{proof}

\section{Examples of universal kernels}\label{secexample}

In this section we present various examples of universal kernels, some
of them has been already introduced in Section~\ref{tre}.

We start with the gaussian kernel, which is a well known example of
universal kernel. The first proof about universality is given
\cite{ste01} with a different technique and in \cite{mixuzh06} by
means of the Fourier transform. In both paper only
compact-universality is taken into account.
\begin{example}\label{gaussian}
Let $X$ be a closed subset of $\runo^d$, $\kk=\cuno$ and
\[\kappa(x,t)=e^{-\frac{\no{x-t}^2}{2\sigma^2}}\qquad x,t\in X,\]
where $\sigma>0$. Then $K$ is a $\Co$-universal kernel.
\end{example}
\begin{proof}
Assume first that $X=\runo^d$, regarded as abelian group, then
$\kappa$ is translation invariant kernel with $\kappa_0$ in
$\cccc_0(\runo^d)\cap L^1(\runo^d,dx)$. According to\eqn{B se K e' L1}
$$B(p)=\sqrt{(2\pi \sigma^2)^d}\,e^{-2\pi^2\sigma^2\no{p}^2}$$
where the dual group is identified with $\runo^d$ by means of
$\chi_p(x)=e^{i2\pi p\cdot x}$. 
Since $B(p)>0$ for all
$p\in\runo^d$, universality is a consequence of
Corollary~$\ref{sufffourier}$. \\ 
If $X$ is an arbitrary closed subset of $\runo^d$ it is enough to
apply Corollary$~\ref{HZu}$. 
\end{proof}
Next example is well known in functional analysis (see, for example,
\cite{bre83}). 
\begin{example}\label{sobolev}
Let $X=\runo$, $\kk=\cuno$ and let
\[\kappa(x,t)=e^{-\pi |x-t|}\,.\]
Then the kernel $\kappa$ is a $\Co$-universal kernel and
$\hh_\kappa=W^1\lft\real\rgt$, the 
Sobolev space of measurable complex functions $f$ on $\real$ with finite norm
$$
\no{f}^2_{W^1} = \int_{X} \left[ |f(x)|^2 + \left| f^\prime (x)
  \right|^2 \right] \de x , 
$$  
where $f^\prime$ is the weak derivative. 
\end{example}
\begin{proof}
The same reasoning as above, observing that
$B(p) =  \frac{2}{\pi + 4\pi p^2} > 0$ for all $p\in\runo$.
\end{proof}
Next example characterizes universal kernels of the form $K=\kappa B$
-- see Example~\ref{esempioB}.
\begin{example}\label{esempioBu}
Let $\kappa$ be a $\Co$-scalar reproducing kernel and $B$ a positive
operator. The kernel $K=\kappa B$ is universal if and only if
$\kappa$ is universal and $B$ is injective. 
\end{example}
\begin{proof}
We have to show that, given a 
probability measure $\mu$, $\hh_{\kappa
  B}$ is dense in $\LduemuXK$. The space $\hh_{\kappa
  B}$ is unitarily equivalent to $\hh_\kappa\otimes \ker{B}^{\perp}$ by
means of $W(\varphi\otimes y)(x)=\varphi(x)
B^{\frac 12}y$, see Example \ref{esempioB}. Hence, it is enough to
prove that $\hh_\kappa\otimes B^{\frac12}\kk$  
is dense in $L^2(X,\mu)\otimes \kk$. This is the case if
and only if $\hh_\kappa$ is dense in $L^2(X,\mu)$ and $B^{\frac12}$ has
dense range, and this last condition is equivalent to the fact that
$B$ is injective since $B$ is a positive operator.
\end{proof}
The same result holds replacing $\Co$-kernel with Mercer kernel and
universality with compact-universality.

\begin{example}
\label{univprod}
Let $\kappa:{X}\times {X}\to\cuno$ and $\kappa^\prime:{X^\prime}\times
{X^\prime}\to\cuno$ be two scalar $\Co$ reproducing kernels on ${X}$
and ${X^\prime}$, respectively. Let $I^\prime$ be the identity
operator on $\hh_{\kappa^\prime}$. 
\begin{itemize}
\item[{\rm (i)}] \,The $\hh_{\kappa^\prime}$-kernel $K=\kappa
I^\prime$ if universal if and only if $\kappa$ is universal.
\item[{\rm (ii)}]\,
Fixed a probability measure $\mu^\prime$ on ${X^\prime}$, the
$L^2({X^\prime},\mu^\prime)$-kernel 
$\widehat{K}=\kappa L_{\mu^\prime}$ is universal if and only if $\kappa$ is
universal and $\hh_{\kappa^\prime}$ is dense in $L^2({X^\prime},\mu^\prime)$.
\item[{\rm (iii)}]\,The scalar kernel $\kappa \times
  \kappa^\prime$ is universal if both $\kappa$ 
and $\kappa^\prime$ are universal.
\end{itemize}
\end{example}
\begin{proof}
Items {\rm (i)} and {\rm (ii)} follow immediately from Example
\ref{esempioBu} and Proposition \ref{densita-iniettivita}. Item {\rm
  (iii)} is a consequence of Proposition~\ref{product} and the density of 
$\Co(X)\otimes \Co(X^\prime)$ in $\Co(X\times X^\prime)$. 
\end{proof}

The following class of examples is considered in \cite{camipoyi08}.
\begin{example} Let $X$ be a locally compact second countable abelian
  group. Let $\lfg B^i \rgg_{i=1}^N$ be a finite set of
  positive operators on $\kk$ and $\lfg \kappa_0^i\rgg_{i=1}^N$ be a finite
  set of scalar functions of positive type in $\cccc_0(X)\cap
  L^1(X,\de x)$. The translation invariant kernel $K$
\[ K(x,t)=\sum_{i=1}^N \kappa_0^i(x-t) B^i\]
is universal provided that $\cap_i {\rm ker} B^i=\lfg 0\rgg$ and, for
each $i=1,\dots N$, 
there is an open dense subset $\hat{Z}^i\subset\hat{X}$ such that 
$\F{\kappa_0^i} > 0$ on $\hat{Z}^i$.
\end{example}
\begin{proof}
Clearly, $\scal{K_0(\cdot)y}{y^\prime}$ is in $L^1(X,\de
x)$. Moreover,  according to\eqn{B se K e' L1},
for all $y\in \kk$ and $\chi\in\hat X$ 
\begin{equation*}
B( \chi) y= \sum_{i=1}^N \F{\kappa_0^i}(\chi^{-1}) B^iy .
\end{equation*}
Each $\widehat{Z}^i$ is open and dense, hence $\widehat{Z}=\cap
\widehat{Z}^i$ is dense in 
$\hat{X}$. Let $\chi\in \hat{Z}$ and $y\in\kk$ such that $B(\chi)y=0$;
then $B^i y=0$ for all $i=1,\ldots, N$, since every $B^i$ is a
positive operator and $\F{\kappa_0^i} > 0$ on $\hat{Z}^i$, so that by
assumption $y =0$. Therefore, $K$ is universal by
Corollary~$\ref{sufffourier}$.   
\end{proof}
\appendix

\section{Vector valued measures}\label{sec. mis. vett.}

In this appendix we describe the dual of $\CoXK$. For $\kk=\complex$,
it is a well known result that ${\mathcal C}_0(X)^\ast$ can be
identified with the 
Banach space of complex measures on $X$. For arbitrary $\kk$, 
a similar result holds by considering the space of 
vector measures. If $X$ is compact, this result is due to
\cite{sin57} and we slightly extend it to $X$ being only locally
compact. The proof we give is  simpler than the original one
also for $X$ compact.  

Moreover, by using a version of Radon-Nikodym theorem for vector
valued measures, it is possible to describe the dual of $\CoXK$ in a
simpler way. Indeed, the following result holds.
\begin{theorem}\label{semplice}
Let $T\in \CoXK^\ast$. There exists a unique probability measure $\mu$ on $X$
and a unique function $h\in\LinfmuXK$ such that
\begin{equation}
  \label{forse}
   T(f)=\int_X \scal{f(x)}{h(x)} \de \mu (x)\qquad f\in\CoXK\
\end{equation}
with $\no{h(x)}=\no{T}$ for $\mu$-almost all $x\in X$. 
\end{theorem}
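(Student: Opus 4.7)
The plan is to deduce Theorem~\ref{semplice} from the vector-valued Riesz representation for the dual of $\CoXK$ (the main content of the preceding appendix), combined with a Radon--Nikodym-type argument that works because $\kk$ is a separable Hilbert space.

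First, I would invoke the vector-valued Riesz-type theorem to represent $T$ by a unique regular $\kk$-valued Borel measure $\M : \borx \to \kk$ of finite total variation, in the sense that
\[
T(f) = \int_X \scal{f(x)}{d\M(x)} \qquad \forall f \in \CoXK,
\]
together with the norm identity $\no{T} = |\M|(X)$, where $|\M|$ is the (positive, finite) total variation of $\M$. The case $T=0$ is trivial (take any probability measure and $h=0$), so I assume $T\neq 0$ and set
\[
\mu := \frac{|\M|}{\no{T}},
\]
which is a probability measure on $X$.

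Second, I would apply the Radon--Nikodym theorem for $\kk$-valued measures: since $\kk$ is a separable Hilbert space (hence has the Radon--Nikodym property) and $\M$ is trivially absolutely continuous with respect to $|\M|$, there exists a measurable $h_0 : X \to \kk$ with $\no{h_0(x)}=1$ for $|\M|$-almost every $x$, such that $d\M = h_0 \, d|\M|$. Setting $h := \no{T}\, h_0$, I get $h \in \LinfmuXK$ with $\no{h(x)} = \no{T}$ for $\mu$-almost every $x$, and a change of variables $d|\M| = \no{T}\, d\mu$ transforms the representation of $T$ into
\[
T(f) = \int_X \scal{f(x)}{h(x)} \, d\mu(x),
\]
which is exactly \eqref{forse}.

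For uniqueness, suppose two pairs $(\mu, h)$ and $(\mu', h')$ both satisfy \eqref{forse} with $\no{h(\cdot)} = \no{h'(\cdot)} = \no{T}$ a.e. Testing against functions of the form $\varphi y$ with $\varphi \in \Co(X)$, $y \in \kk$, the identity reduces (via scalar Riesz) to the equality of complex measures $\scal{y}{h}\mu = \scal{y}{h'}\mu'$ for every $y\in\kk$. Taking $y$ over a countable dense set in $\kk$ and using the pointwise norm constraints forces the polar decompositions of both sides to match, giving $\mu = \mu'$ and $h = h'$ $\mu$-almost everywhere.

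The main obstacle is the Radon--Nikodym step: one must produce a $\kk$-valued density of $\M$ with respect to $|\M|$ whose norm equals $1$ almost everywhere. This uses separability of $\kk$ in an essential way, since the standard construction approximates $\M$ by simple $\kk$-valued measures on a countable generating algebra and extracts a measurable pointwise limit; once this is in hand, the normalization $\no{h}=\no{T}$ falls out from $\no{T}=|\M|(X)$ and the choice $\mu = |\M|/\no{T}$, so the remainder of the argument is bookkeeping.
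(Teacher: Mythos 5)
Your proposal is correct and follows essentially the same route as the paper: the paper's proof of Theorem~\ref{semplice} is precisely ``combine Theorem~\ref{dualita'} (the vector-valued Riesz representation $\CoXK^\ast = M(X;\kk)$ with $\no{T}=|\M|(X)$) with Theorem~\ref{Radon-Nikodym} (the density $h_0$ with $\no{h_0(x)}=1$ $|\M|$-a.e.)'', and your normalization $\mu=|\M|/\no{T}$, $h=\no{T}\,h_0$ is exactly the bookkeeping that combination requires. The only remark worth making is that uniqueness of $\mu$ fails trivially when $T=0$ (any probability measure works with $h=0$), which is an imprecision in the statement itself rather than in your argument.
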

\begin{proof}
It follows combining Theorems \ref{Radon-Nikodym} and \ref{dualita'} below.
\end{proof}

Observe that, given $\mu$ and $h$ as in the statement of the
theorem, if we define $T$ by\eqn{forse}, then
$T\in\CoXK$. Hence\eqn{forse} completely characterizes the dual of
$\CoXK$ in terms of pairs $(\mu,h)$.

To prove the theorem, we recall some basic facts from the theory of vector
valued measures (see \cite{diuh77,lan93}). If $A\in\borx$,
we denote by $\Pi (A)$ the family of partitions of $A$ into finite or
denumerable disjoint Borel subsets.
\begin{definition}\label{caffe}
A {\em vector measure} on $X$ with values in $\kk$ is a
mapping $\M : \borx \frecc \kk$ such that 
\begin{itemize}
\item[{\rm (i)}]
$$
\sup_{ \{ A_i \} \in \Pi (X) } \sum_i \no{\M (A_i)}  < \infty ;
$$
\item[{\rm (ii)}] for all $A\in\borx$ and $\{ A_i \} \in \Pi (A)$
$$
\M(A) = \sum_i \M (A_i)
$$
where the sum converges absolutely by item {\rm (i)}.
\end{itemize}
\end{definition}

If $\M$ is a $\kk$-valued vector measure on $X$, for all $A\in\borx$ we define
$$
| \M | (A) = \sup_{ \{ A_i \} \in \Pi (A)} \sum_{i\in I} \no{\M (A_i)} .
$$
Then, $| \M |$ is a bounded positive measure on $X$, called
the {\em total variation} of $\M$. 

The integration of a function $f\in\LunoXMK$ with respect to $M$ is
defined as it follows. Let ${\rm St} (X ; \kk)$ be the space of
functions $f = \sum_{i = 1}^n 
1_{A_i} v_i$, with $A_i$ disjoint Borel sets and $v_i \in \kk$
($1_A$ is the characteristic function of the set $A$). For such
$f$'s, define 
\begin{equation}\label{int. vett.}
\int_{X} \scal{f(x)}{\de \M (x)} : = \sum_{i = 1}^n \scal{v_i}{\M (A_i)} .
\end{equation}
Since
\begin{equation*}
\left| \sum_{i = 1}^n \scal{v_i}{\M (A_i)} \right| \leq \sum_i
\no{v_i} \no{\M (A_i)} \leq \sum_i | \M | (A_i) \no{v_i} = \no{f}_{1}, 
\end{equation*}
the integral\eqn{int. vett.} extends to a bounded functional on
$\LunoXMK$, which is denoted again by $\int_{X} \scal{f(x)}{\de \M
  (x)}$. By Theorem 4.1 in \cite{lan93}, then there exists
$h\in\LinfXMK$ such that 
$$
\int \scal{f(x)}{\de \M (x)} = \int \scal{f(x)}{h(x)} \de | \M | (x) \quad \forall f\in\LunoXMK ,
$$
and $\no{h(x)} = 1$ for $| \M |$-almost all $x$.  These facts are collected in the following
theorem. 
\begin{theorem}[Radon-Nikodym]\label{Radon-Nikodym}
If $\M$ is a $\kk$-valued vector measure on $X$, there exists
a unique $| \M |$-measurable function $h : X\frecc \kk$ such that
$\no{h(x)} = 1$ for $| \M |$-almost all $x$ and 
$$
\int_{X} \scal{f(x)}{\de \M (x)} = \int_{X} \scal{f(x)}{h(x)} \de | \M
| (x) \quad \forall f\in\LunoXMK . 
$$
\end{theorem}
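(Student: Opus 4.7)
The plan is to reduce the statement to the scalar Radon–Nikodym theorem applied in every direction $y\in\kk$ and then reassemble the resulting scalar densities into a single $\kk$-valued function using separability of $\kk$.

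First, for each $y\in\kk$ the set function $\M_y(A):=\scal{y}{\M(A)}$ is a complex measure on $X$, and a direct inspection of Definition~\ref{caffe} gives $|\M_y|(A)\le\no{y}\,|\M|(A)$. The classical scalar Radon–Nikodym theorem therefore produces a density $h_y\in L^\infty(X,|\M|)$ with $\no{h_y}_{|\M|,\infty}\le\no{y}$ and $\M_y(A)=\int_A h_y\,d|\M|$, and the assignment $y\mapsto h_y$ is conjugate-linear and norm-decreasing from $\kk$ into $L^\infty(X,|\M|)$.

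Second, I promote the family $\{h_y\}$ to a vector density. Fix a countable $(\mathbb{Q}+i\mathbb{Q})$-linear subspace $D\subset\kk$ which is dense in $\kk$. By discarding at most one $|\M|$-null set per rational linear relation in $D$ and per element of $D$, arrange on a single set $X_0$ of full $|\M|$-measure that $h_{\alpha y+\beta z}(x)=\bar\alpha h_y(x)+\bar\beta h_z(x)$ and $|h_y(x)|\le\no{y}$ hold for all $y,z\in D$ and all $\alpha,\beta\in\mathbb{Q}+i\mathbb{Q}$. On $X_0$ the functional $y\mapsto h_y(x)$ on $D$ extends uniquely to a bounded conjugate-linear functional on $\kk$ of norm at most one, so the Riesz representation theorem yields a unique $h(x)\in\kk$ with $\no{h(x)}\le 1$ and $h_y(x)=\scal{y}{h(x)}$ for every $y\in\kk$; setting $h\equiv 0$ off $X_0$ makes $h$ pointwise defined, and measurability follows from separability of $\kk$ and measurability of each scalar coordinate $\scal{y}{h(\cdot)}=h_y(\cdot)$.

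Third, the integral identity is verified first on step functions $f=\sum_{i=1}^n 1_{A_i}v_i\in {\rm St}(X;\kk)$, where by linearity and the definition of $h_{v_i}$
\[
\int_X\scal{f(x)}{d\M(x)}=\sum_i\scal{v_i}{\M(A_i)}=\sum_i\int_{A_i}h_{v_i}\,d|\M|=\int_X\scal{f(x)}{h(x)}\,d|\M|(x),
\]
and both sides are continuous in $f\in\LunoXMK$, so the identity extends by density. Uniqueness of $h$ is immediate, since its coordinate functions $\scal{y}{h(\cdot)}$ are determined $|\M|$-a.e.~by $\M$ through the scalar Radon–Nikodym theorem.

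The main obstacle, and the step I expect to require the most care, is upgrading the bound $\no{h(x)}\le 1$ to the equality $\no{h(x)}=1$ for $|\M|$-a.a.~$x$. For any Borel partition $\{A_i\}$ of $A\in\borx$,
\[
\no{\M(A_i)}=\sup_{\no{y}\le 1}|\scal{y}{\M(A_i)}|=\sup_{\no{y}\le 1}\Bigl|\int_{A_i}\scal{y}{h}\,d|\M|\Bigr|\le\int_{A_i}\no{h}\,d|\M|,
\]
and summing gives $\sum_i\no{\M(A_i)}\le\int_A\no{h}\,d|\M|$. Taking the supremum over partitions yields $|\M|(A)\le\int_A\no{h}\,d|\M|$ for every $A\in\borx$; combined with the already established $\no{h}\le 1$, this forces $\no{h(x)}=1$ for $|\M|$-almost every $x$, completing the proof.
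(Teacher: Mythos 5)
Your proof is correct, but it takes a genuinely different route from the paper for the simple reason that the paper does not prove Theorem~\ref{Radon-Nikodym} at all: it quotes the existence of the density $h$ with $\no{h(x)}=1$ from Theorem~4.1 of \cite{lan93} and merely collects the statement for later use. Your argument is a self-contained derivation that exploits the Hilbert-space structure of $\kk$: scalar Radon--Nikodym applied in each direction (and $\M_y=\scal{y}{\M(\cdot)}$ is indeed a complex measure with $|\M_y|\leq\no{y}\,|\M|$, so the scalar theorem applies with the stated $L^\infty$ bound); a countable dense $\mathbb{Q}+i\mathbb{Q}$-subspace to enforce the linearity relations and bounds off a single null set; pointwise Riesz representation plus separability (Pettis) to obtain a measurable $\kk$-valued $h$ with $\no{h}\leq 1$; verification on ${\rm St}(X;\kk)$ and extension by the $L^1$-continuity of both sides; and, for the delicate point, the total-variation estimate $|\M|(A)\leq\int_A\no{h}\,\de|\M|$, which combined with $\no{h}\leq 1$ forces $\no{h}=1$ almost everywhere --- that is exactly the right way to upgrade the inequality, and you execute it correctly. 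What the paper's citation buys is brevity and validity in the general Banach-space setting, where one needs the Radon--Nikodym property; what your argument buys is a proof from first principles showing that for separable Hilbert $\kk$ no vector-measure machinery beyond the scalar theorem is required. One bookkeeping remark: with the paper's convention that $\scal{\cdot}{\cdot}$ is linear in its first argument (forced by the definition of $w_1\otimes\overline{w_2}$ in Section~\ref{sez. not.}), the map $y\mapsto h_y$ is linear rather than conjugate-linear, so the conjugates in your rational-linearity relations should be dropped; this affects nothing structurally, since the Riesz representation step goes through either way.
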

The function $h$ is called the {\em density} of $\M$ with respect to
$| \M |$.

We denote by $M(X;\kk)$ the space of $\kk$-valued vector measures on
$X$. The space $M(X;\kk)$ is a Banach space with respect to the norm 
$$
\no{\M} = |\M |(X) 
$$
(see \cite{diuh77}). If $\kk = \cuno$, we let $M(X) = M(X;\cuno)$.
The next duality theorem is shown in \cite{sin57} for $X$ compact --
see also \cite{diuh77}.
\begin{theorem}\label{dualita'}
If $\CoXK$ is endowed with the Banach space topology induced by the
uniform norm, then $\CoXK^\ast = M(X;\kk)$, the duality being given by 
\begin{equation*}
\scal{f}{\M} = \int_{X} \scal{f(x)}{\de \M (x)} \quad\forall f\in\CoXK
,\, \M\in M(X;\kk). 
\end{equation*}
\end{theorem}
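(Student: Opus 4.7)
The plan is to verify the two directions of the stated duality separately. For the easy direction, any $\M\in M(X;\kk)$ defines a bounded functional $T_\M\in\CoXK^\ast$ via the given formula, with $\no{T_\M}\leq|\M|(X)$: by the construction of $\int\scal{f}{d\M}$ recalled before Theorem~\ref{Radon-Nikodym}, the inclusion $\CoXK\hookrightarrow\LunoXMK$ gives $\no{f}_1\leq\no{f}_\infty|\M|(X)$ and hence $|T_\M(f)|\leq\no{f}_\infty|\M|(X)$. This yields the continuous embedding $M(X;\kk)\hookrightarrow\CoXK^\ast$; the bulk of the work is to invert it.

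For the hard direction I would build $\M$ from a given $T\in\CoXK^\ast$ by scalarization. For each $y\in\kk$, the prescription $T_y(f):=T(fy)$ defines a bounded linear functional on $\Co(X)$ with $\no{T_y}\leq\no{T}\no{y}$, so the scalar Riesz representation theorem furnishes a unique complex Borel measure $\mu_y$ on $X$ such that $T_y(f)=\int f\,d\mu_y$ and $|\mu_y|(X)=\no{T_y}$. Complex-linearity of $T$ in its argument passes to $y\mapsto\mu_y$, so for each $A\in\borx$ the map $y\mapsto\mu_y(A)$ is a bounded complex-linear functional on $\kk$; the Riesz lemma for the Hilbert space $\kk$ then supplies a unique $\M(A)\in\kk$ with $\mu_y(A)=\scal{y}{\M(A)}$ for all $y$.

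The main obstacle is establishing $|\M|(X)\leq\no{T}$, which I would attack by a Urysohn-type approximation. Given a finite Borel partition $\{A_1,\ldots,A_n\}$ of $X$, pick unit $y_i=\M(A_i)/\no{\M(A_i)}$ (with $y_i=0$ when $\M(A_i)=0$), so that $\no{\M(A_i)}=\mu_{y_i}(A_i)$. Fix $\epsilon>0$: regularity of each $|\mu_{y_i}|$ yields compact $K_i\subset A_i$ and open $V_i\supset A_i$ with $|\mu_{y_i}|(V_i\setminus A_i)+|\mu_{y_i}|(A_i\setminus K_i)<\epsilon/n$; Hausdorffness of $X$ allows us to separate the pairwise disjoint compacts $K_i$ by pairwise disjoint opens $W_i$ with $K_i\subset W_i\subset V_i$; and Urysohn's lemma produces $f_i\in\Co(X)$ with $0\leq f_i\leq 1$, $f_i\equiv 1$ on $K_i$, $\supp f_i\subset W_i$. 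Then $g=\sum_i f_i\,y_i\in\CoXK$ satisfies $\no{g}_\infty\leq 1$ by disjointness of the supports, and splitting $\int f_i\,d\mu_{y_i}$ over $K_i$ and $W_i\setminus K_i$ bounds $|T(g)-\sum_i\mu_{y_i}(A_i)|$ by a small multiple of $\epsilon$. Hence $\sum_i\no{\M(A_i)}\leq|T(g)|+O(\epsilon)\leq\no{T}+O(\epsilon)$, and letting $\epsilon\to 0$ and taking the supremum over (finite and then denumerable) partitions gives $|\M|(X)\leq\no{T}<\infty$.

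The remaining verifications are routine. Weak countable additivity $\scal{y}{\M(\bigsqcup_i A_i)}=\sum_i\scal{y}{\M(A_i)}$ comes for free from each scalar $\mu_y$, while finite total variation ensures $\sum_i\no{\M(A_i)}<\infty$ on any partition, so the series converges absolutely in norm and its limit coincides with the weak limit; this verifies the axioms of Definition~\ref{caffe}. Both $T$ and $f\mapsto\int\scal{f}{d\M}$ are continuous on $\CoXK$ and, using the Radon-Nikodym identity $d\mu_y=\scal{y}{h}\,d|\M|$ from Theorem~\ref{Radon-Nikodym}, they agree on the elementary tensors $f=g\cdot y$ with $g\in\Co(X)$ and $y\in\kk$; density of $\Co(X)\otimes\kk$ in $\CoXK$ under the uniform norm, a consequence of local compactness of $X$ and separability of $\kk$, then extends the representation to all of $\CoXK$. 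Uniqueness of $\M$ is inherited from the scalar Riesz theorem, since any representative recovers $\mu_y$ from $T$ through $f\mapsto T(fy)$.
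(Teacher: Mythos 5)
Your proof is correct and follows the same overall strategy as the paper's: scalarize $T$ via $y\mapsto T(\cdot\,y)$, apply the scalar Riesz theorem and the Hilbert-space Riesz lemma to assemble the set function $\M$, bound its total variation by testing $T$ against norm-one elements of $\CoXK$, and recover $T=T_\M$ from the Radon--Nikodym density together with the density of $\Co(X)\otimes\kk$. The one step you handle genuinely differently is the key estimate $\sum_i\no{\M(A_i)}\leq\no{T}$: the paper approximates the indicators $1_{A_i}$ by compactly supported functions $\varphi_j^{(i)}$ converging pointwise $\nu$-a.e.\ (with $\nu=\sum_i|\mu_{v_i}|$), forces the test function into the unit ball via the normalization $\bigl[1\vee\sum_k|\varphi_j^{(k)}|\bigr]^{-1}$, and concludes by dominated convergence; you instead use inner/outer regularity of the $|\mu_{y_i}|$, separation of the pairwise disjoint compacts $K_i$ by disjoint open sets, and Urysohn functions with disjoint supports, which makes the sup-norm bound on the test function automatic and yields an explicit $\epsilon$-estimate. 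Both arguments are valid here, since $X$ locally compact second countable guarantees the regularity and the separation you invoke; yours trades the normalization trick and the limiting argument for a quantitative one. A small bonus of your write-up is that you obtain the isometry $\no{T_\M}=\no{\M}$ by combining $\no{T_\M}\leq\no{\M}$ with $|\M|(X)\leq\no{T}$ and uniqueness, whereas the paper proves $\no{T_\M}=\no{\M}$ directly via Lusin's theorem.
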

\begin{proof}
By Theorem~\ref{Radon-Nikodym}, it is clear that, if $\M \in M(X;\kk)$, then
$$
T_{\M} (f) = \int_{X} \scal{f(x)}{\de \M (x)} = \int_{X}
\scal{f(x)}{h(x)} \de | \M | (x) 
$$
defines a bounded functional $T_{\M}$ on $\CoXK$. 

Clearly $\no{T_{\M}} \leq  \no{{\M}}$. To show that $\no{T_{\M}} =
\no{{\M}}$, fix by Lusin theorem a function $g\in\CoXK$ such that
$g(x) = h(x)$ for $x\in X\setminus Z$, $Z$ being a $|\M |$-measurable
set with $|\M | (Z) < \eps$, and $\no{g}_\infty \leq
\no{h}_{|M|,\infty} = 1$. For $\eps$ small enough, we then have  
$$
|\M | (X) - 2\eps < |\M | (X\setminus Z) - |\M | (Z) \leq \left|
  \int_{X} \scal{g(x)}{h(x)} \de | \M | (x) \right| \leq |\M | (X). 
$$
This shows that $\no{T_{\M}} = \no{{\M}}$.

Suppose now $T\in\CoXK^\ast$. For $v\in\kk$, let $i_v : \cccc_0 (X)
\frecc \CoXK$ be the bounded operator given by 
$$
[i_v (\varphi)](x) = \varphi (x) v.
$$
Since $T i_v \in \cccc_0 (X)^\ast$, by Riesz theorem there exists a
measure $\mu_v \in M(X)$ such that 
$$
T i_v (\varphi) = \int_{X} \varphi (x) \de \mu_v
(x)\qquad\text{and}\qquad \no{Ti_v}=\no{\mu_v}.
$$
For all $A\in\borx$, let $\M (A)$ be the vector in $\kk$ such that
$$
\scal{v}{\M (A)} = \mu_v (A)
$$
($\M (A)$ is well defined, since $|\mu_v (A) | \leq \no{\mu_v} = \no{T i_v} \leq \no{T} \no{v}$). 

We now show that, if $A\in\borx$ and $\{A_i\} \in\Pi(A)$, then
$$
\sum\nolimits_i \no{\M (A_i)} \leq \no{T},
$$
so that item~(i) of Definition~\ref{caffe} holds. It is enough to prove it for
all finite partitions $\{A_i\}_{i=1\ldots n}$. Let $v_i = \M (A_i) /
\no{\M (A_i)}$ (we set $v_i = 0$ whenever $\M (A_i) = 0$). We have 
\begin{equation*}
\sum\nolimits_i \no{\M (A_i)} = \sum\nolimits_i \scal{v_i}{\M (A_i)} = \sum\nolimits_i \mu_{v_i} (A_i).
\end{equation*}
Set $\nu = \sum_i | \mu_{v_i} |$, which is $\nu$ a bounded positive
measure, and every $\mu_{v_i}$ has density with respect to $\nu$. For
all $i = 1\ldots n$, fix a sequence $\{ \varphi^{(i)}_j \}_{j\in\nat}$
in $\cccc_c (X)$ such that $\lim_j \varphi^{(i)}_j (x) = 1_{A_i} (x)$
for $\nu$-almost all $x$. Define 
$$
\psi_j (x) = \left[ 1 \vee \sum_{k=1}^n \left| \varphi^{(k)}_j (x)
  \right| \right]^{-1}  \sum_{i=1}^n\varphi^{(i)}_j (x) v_i . 
$$
Then, $\psi_j \in \cccc_c (X;\kk)$, and $\no{\psi_j (x)} \leq 1$ for all $x$. Moreover,
$$
\left| \left[ 1 \vee \sum_{k=1}^n \left| \varphi^{(k)}_j (x) \right|
  \right]^{-1}  \varphi^{(i)}_j (x) \right| \leq 1 \quad
\forall x,\  i
$$
and
$$
\lim_j \left[ 1 \vee \sum_{k=1}^n \left| \varphi^{(k)}_j (x) \right|
\right]^{-1}
\varphi^{(i)}_j (x) = 1_{A_i} (x) \quad \textrm{for $\nu$-almost all $x$} .
$$
Therefore
\begin{eqnarray*}
&& \left| \sum\nolimits_i \no{\M (A_i)} - T\psi_j \right| \\
&& \qquad = \left| \sum\nolimits_i \left\{ \mu_{v_i}(A_i) - T i_{v_i} 
\left( \left[  1 \vee \sum_k \left| \varphi^{(k)}_j \right|
\right]^{-1} \varphi^{(i)}_j \right) \right\} \right| \\ 
&& \qquad \leq \sum\nolimits_i \left| \int_{X} \left\{1_{A_i} (x) - 
\left[ 1 \vee \sum_k \left| \varphi^{(k)}_j (x) \right| \right]^{-1}
\varphi^{(i)}_j (x) \right\} \de \mu_{v_i} (x) \right| \\ 
&& \qquad \stackrel{j\to\infty}{\frecc} 0
\end{eqnarray*}
by dominated convergence theorem.
On the other hand, $| T \psi_j | \leq \no{T} \no{\psi_j}_\infty \leq \no{T}$. It follows that $\sum_{i=1}^n \no{\M (A_i)} \leq \no{T}$, as claimed.

We now show that
$$
\M (A) = \sum_i \M (A_i)
$$
(absolutely) for all $A\in\borx$ and $\{ A_i \} \in \Pi (A)$. We have
just proved that the right hand side is absolutely convergent, and the
equality follows by 
$$
\scal{v}{\sum_i \M (A_i)} = \sum_i \mu_v (A_i) = \mu_v (A) = \scal{v}{\M (A)} \quad \forall v\in\kk .
$$

Therefore, $\M$ is a $\kk$-valued measure. It remains to show that $T
= T_{\M}$. Let $h$ and $|\M |$ be associated to $\M$ as in
Radon-Nikodym theorem. Then, for any Borel set $A\subset X$, we have 
$\mu_v (A) = \int_{A} \scal{v}{h(x)} \de |\M | (x)$, from which it follows that $\mu_v$ has density $\scal{v}{h(x)}$ with respect to $|\M |$. For $\varphi\in\cccc_c (X)$ and $v\in\kk$, we thus have
$$
T(\varphi v) = \int_{X} \varphi (x) \de \mu_v (x) = \int_{X} \scal{\varphi (x) v}{h(x)} \de |\M | (x) = T_{\M} (\varphi v) .
$$
Then, $T = T_{\M}$ by density of $\cccc_c (X) \otimes \kk$ in $\CoXK$.
\end{proof}

\begin{ack}
  This work has been partially supported by the FIRB project
  RBIN04PARL and by the the EU Integrated Project Health-e-Child
  IST-2004-027749.
\end{ack}

\end{document}